\def\Z{\mathbb{Z}}
\def\Q{\mathbb{Q}}
\def\R{\mathbb{R}}
\def\F{\mathbb{F}}
\def\C{\mathbb{C}}
\def\ee{\mathfrak{e}}
\def\ff{\mathfrak{f}}
\def\L#1{{#1}[t^{\pm 1}]}
\def\O#1{#1(t)}
\def\LF{\L{\F}}
\def\LC{\L{\C}}
\def\OC{\O{\C}}
\def\ol#1{\overline{#1}}
\def\ds{\delta\sigma}
\def\hodgep{\mathcal{P}}
\def\basicR{\mathsf{R}}
\def\basicC{\mathsf{C}}
\def\basicF{\mathsf{F}}
\def\makeithash#1{#1^\#}
\def\makeithashT#1{#1^{\#T}}
\def\eps{\epsilon}
\newcommand{\N}{\mathbb{N}}
\newcommand{\id}{\operatorname{id}}
\newcommand{\sign}{\operatorname{sign}}
\newcommand{\im}{\operatorname{im}}
\DeclareMathOperator{\lk}{lk}
\DeclareMathOperator{\Hom}{Hom}
\DeclareMathOperator\iim{Im}
\DeclareMathOperator\re{Re}
\DeclareMathOperator{\Bl}{Bl}
\DeclareMathOperator{\GL}{GL}
\newcommand{\bsm}{\left(\begin{smallmatrix}}
\newcommand{\esm}{\end{smallmatrix}\right)}
\newtheorem{theorem}{Theorem}[section]
\newtheorem{corollary}[theorem]{Corollary}
\newtheorem{lemma}[theorem]{Lemma}
\newtheorem{proposition}[theorem]{Proposition}
\theoremstyle{definition}
\newtheorem{definition}[theorem]{Definition}
\newtheorem{example}[theorem]{Example}
\newtheorem{convention}[theorem]{Convention}
\newtheorem{algorithm}[theorem]{Algorithm}
\newtheorem{remark}[theorem]{Remark}
\newtheorem{construction}[theorem]{Construction}
\theoremstyle{claim}
\newtheorem*{claim*}{Claim}
\newcommand{\purple}{\textcolor{purple}}
\numberwithin{equation}{section}
\begin{document}
 
\title{Twisted Blanchfield pairings and twisted signatures  III: Applications}
\author{Maciej Borodzik}
\address{Institute of Mathematics, University of Warsaw, ul. Banacha 2, 02-097 Warsaw, Poland}
\email{mcboro@mimuw.edu.pl}
\author[A.~Conway]{Anthony Conway}
\address{The University of Texas at Austin, Austin TX 78712}
\email{anthony.conway@austin.utexas.edu}
\author{Wojciech Politarczyk}
\address{Institute of Mathematics, University of Warsaw, ul. Banacha 2, 02-097 Warsaw, Poland.}
\email{wpolitarczyk@mimuw.edu.pl}

\begin{abstract}
  This paper describes how to compute algorithmically certain twisted signature invariants of a knot $K$ using twisted Blanchfield forms.
  An illustration of the algorithm is implemented on $(2,q)$-torus knots.
  Additionally, using satellite formulas for these invariants, we also show how to obstruct the sliceness of certain iterated torus knots.
\end{abstract}

\maketitle

\section{Introduction}

This paper, which is the last in a series of three~\cite{BCP_Alg,BCP_Top}, illustrates how to compute algorithmically certain signature invariants of a knot~$K$, twisted by a representation~$\pi_1(X_K) \to GL_d(\LF)$, where~$X_K$ denotes the exterior of~$K$. 
Before describing this algorithm, we provide some background on twisted Blanchfield forms and twisted signature invariants.

\subsection{Classical knot theory}

Classical knot theory is concerned with knot invariants that are extracted from the algebraic topology of the knot exterior.
Here,  given a knot $K$, customary notation involves using~$\nu(K)$ for an open tubular neighbourhood of~$K$ and  $X_K:=S^3 \setminus \nu(K)$  for the exterior of~$K$.
Textbook examples of classical knot invariants include the Alexander polynomial~$\Delta_K$ and the Levine-Tristram signature~$\sigma_K \colon S^1 \to \Z$~\cite{LickorishIntroduction,Rolfsen}.

Both of these invariants are extracted from the infinite cyclic cover~$X_K^\infty \to X_K$,  and can be calculated using Seifert surfaces, i.e.\  compact, connected, oriented surfaces in~$S^3$ with boundary~$K$.
Indeed,   given a Seifert matrix~$A$ for $K$,
the Alexander polynomial and signature at $\omega \in S^1 $
can be expressed as
\begin{align*}
&\Delta_K(t)=\det(tA-A^T),\\
&\sigma_K(\omega)=\sign((1-\omega)A+(1- \overline{\omega})A^T).
\end{align*}
The same goes for the \emph{Alexander module} $H_1(X_K;\Z[t^{\pm1 }]):=H_1(X_K^\infty)$  which is presented, as a~$\Z[t^{\pm 1}]$-module, by the matrix~$tA-A^T$.

Another classical knot invariant, that appears less frequently in textbooks is a nonsingular, sesquilinear, Hermitian form
$$ \Bl(K) \colon H_1(X_K;\Z[t^{\pm 1}]) \times H_1(X_K;\Z[t^{\pm 1}])  \to \Q(t) /\Z[t^{\pm 1}],$$
known as the \emph{Blanchfield form.}
Sesquilinearity refers to the fact that $\Bl(K)(px,qy)=p\overline{q}\Bl(K)(x,y)$ for every $x,y \in H_1(X_K;\Z[t^{\pm 1}])$ (where, given a rational function $p:=p(t) \in \Q(t)$,  we write $\overline{p}:=p(t^{-1})$) and $\Bl(K)$ being Hermitian means that $\Bl(K)(y,x)=\overline{\Bl(K)(x,y)}$.
The Blanchfield form can also be expressed using Seifert matrices~\cite{KeartonBlanchfieldSeifert,FriedlPowell}.
The data of the pair~$(H_1(X_K;\Z[t^{\pm 1}]),\Bl(K))$ encapsulates both the Alexander polynomial and the signature: $H_1(X_K;\Z[t^{\pm 1}])$ determines~$\Delta_K$ and~$\Bl(K)$ determines~$\sigma_K$.

Applications of classical invariants include the study of the unknotting number,  the $3$-genus, fibredness as well as questions related to $4$-dimensional topology and knot concordance.
Here \emph{knot concordance} refers to the study of (topologically) slice knots i.e.\ knots that bound a locally flat disc in~$D^4$.

 \subsection{Twisted knot invariants}

More recently, knot theorists have taken up the study of 
 invariants of pairs~$(K,\beta)$, where~$K$ is a knot and~$\beta \colon \pi_1(X_K) \to GL_d(\LF)$ is a representation; (in fact,  instead of the exterior~$X_K$, it is often convenient to use the closed~$3$-manifold~$M_K$ obtained from~$S^3$ by~$0$-framed surgery on~$K$).
 The idea is that classical invariants correspond to the case where~$\beta$ is the trivial representation,  whereas nontrivial representations capture more information about the fundamental group. 

The first papers on the topic, such as~\cite{Lin, KirkLivingston}, focused on twisted Alexander polynomials, but the theory has also been leveraged to construct signature invariants~\cite{LevineEta,FriedlEta,KirkLivingston,Nosaka,ChaJEMS} and arguably has its root in earlier work of Casson and Gordon~\cite{CassonGordon1,CassonGordon2}.
Many of the applications and successes
of twisted knot invariants 
are discussed in~\cite{FriedlVidussiSurvey} 
but we simply note that one common area of application again concerns the study of knot concordance; see e.g.~\cite{HeraldKirkLivingston,
KirkLivingstonConcordanceAndMutation,
KirkLivingstonMutation, FriedlEta,
FriedlEtaLink,
LivingstonMeier,MillerPretzel,Miller2Bridge}.

More recently,  taking inspiration from work of Cochran-Orr-Teichner~\cite{CochranOrrTeichner},
Miller and Powell initiated the study of the twisted Blanchfield form $\Bl_\beta(K)$, also with an eye towards applications to knot concordance group~\cite{MillerPowell}.
The absence of computational tools (such as the Seifert matrix in the classical case) has however limited further applications.

The goal of our sequence of papers is to further develop the theory of twisted Blanchfield forms and signatures,  address computational aspects,  and obtain further results in knot concordance.
Our theory was applied in~\cite{ConwayKimPolitarczyk} to study the subgroup of the knot concordance group generated by iterated torus knots; see also~\cite{ConwayNagelFibered}.

 \color{black}

\subsection{Twisted Blanchfield forms and twisted signatures}

Given a knot~$K \subset S^3$ and a unitary acyclic representation~$\beta \colon \pi_1(M_K) \to GL_d(\LC)$ (we call $\beta$ \emph{acyclic} if the $\LC$-module $H_1(M_K;\LC^d_{\beta})$ is torsion for each $i$), the twisted Blanchfield form is a non-singular, sesquilinear Hermitian pairing 
$$ \Bl_{\beta}(K) \colon H_1(M_K;\LF^d_{\beta}) \times H_1(M_K;\LF^d_{\beta})  \to \C(t) /\LC.$$
Details will be recalled in Section~\ref{sec:TwistedBlanchfield}, but we nevertheless note two facts.
First, when~$\beta$ is induced by abelianisation,~$\Bl_{\beta}(K)$ reduces to the classical Blanchfield form~$\Bl(K)$.
Secondly, while~$\Bl_{\beta}(K)$ has been studied by Powell~\cite{PowellThesis,Powell} and Miller-Powell~\cite{MillerPowell}, extracting concrete computable invariants from~$\Bl_{\beta}(K)$ has remained challenging.
For instance, while~$\Bl_{\beta}(K)~$ has been used to obstruct certain satellite knots from being slice~\cite{MillerPowell}, no explicit calculations have been possible for low crossing knots.

In the first two papers of this series~\cite{BCP_Alg, BCP_Top}, we proved a classification result for linking forms over~$\LF$, where $\F=\R,\C$, and described how to use this result to extract computable signature invariants from~$\Bl_{\beta}(K)$, including a new \emph{twisted signature function}
$$ \sigma_{K,\beta} \colon S^1 \to \Z$$
that reduces to~$\sigma_K$ for abelian~$\beta$ and that is related to the Casson-Gordon invariants~\cite{CassonGordon1,CassonGordon2} for metabelian~$\beta$.
We note that the Blanchfield form $\Bl_{\beta}(K)$ (and therefore the signature function~$\sigma_{K,\beta}$) only depends on the conjugacy class of~$\beta$.

Thanks to results from~\cite{BCP_Alg} and to formulas for the behavior of~$\Bl_{\beta}(K)$ under satellite operations from~\cite{BCP_Top}, this paper shows how twisted signature invariants of iterated torus knots can be understood explicitly.

\begin{remark}
\label{rem:MetabelianIntro}
The relationship between $\sigma_{K,\beta}$ and Casson-Gordon invariants arises by considering a representation $\alpha_K(n,\chi) \colon \pi_1(M_K) \to GL_n(\LC)$, where $\chi \colon H_1(\Sigma_n(K);\Z) \to \Z_m$ is a character; here $\Sigma_n(K)$ denotes the $n$-fold branched cover of $K$.
The definition of this representation will be recalled in Section~\ref{sec:MetabelianBlanchfield}, but, from now on, we refer to $\Bl_{\alpha(n,\chi)}(K)$ as a \emph{metabelian Blanchfield form} and to the corresponding signatures as \emph{metabelian signatures}.
\end{remark}

\subsection{Twisted signatures of iterated torus knots}

In order to describe our computation of twisted signatures for (iterated) torus knots, we briefly recall from~\cite{BCP_Top} how $\sigma_{K,\beta}$ is constructed; more details will be given in Section~\ref{sec:TwistedBlanchfield}.
As we review in Section~\ref{sec:SignaturesOfLinkingForms}, given a torsion $\LC$-module~$M$, our work from~\cite{BCP_Alg} associates to every linking form $\lambda \colon M \times M \to \C(t)/\LC$ a
locally constant signature function
$$ \sigma_{(M,\lambda)} \colon S^1 \to \Z. $$
The twisted signature $\sigma_{K,\beta}$ is obtained as the signature function of the linking form $\Bl_{\beta}(K)$.
In practice however, instead of describing the signature function, it is often more convenient to describe its value at the points $\xi \in S^1$ where it jumps.
Without going into details, given $\xi \in S^1$, these values are captured by the \emph{signature jump}
$$ \delta \sigma_{(M,\lambda)}(\xi) \in \Z,$$
the collection of which 
determines $\sigma_{M,\lambda}$ up to an additive constant.
In turn, the signature jumps can be explicitly calculated if one knows the isometry type 
of the linking form $(M,\lambda)$.
Indeed, every linking form decomposes uniquely as
\begin{equation}\label{eq:splittingIntro}
(M,\lambda)=
    \bigoplus_{\substack{n_i,\eps_i,\xi_i\\ i\in I}}\ee(n_i,\eps_i,\xi_i,\C)\oplus
    \bigoplus_{\substack{\xi_j\\ j\in J}}\ff(n_j,\xi_j,\C)
    \end{equation}
    where the summands are explicit linking forms (whose definitions are recalled in Subsection~\ref{sec:SignaturesOfLinkingForms}) and the value of $\delta \sigma_{(M,\lambda)}(\xi)$ can be read off this decomposition.
    
\begin{remark}
\label{rem:SignatureComputable}
In the decomposition of~\eqref{eq:splittingIntro}, the parameters~$n_i \in \N_{>0}$ and $\xi_i \in \C$ can be read off the primary decomposition of the module $H$; the knowledge of the linking form is not necessary.
The signs~$\epsilon_i \in \lbrace \pm 1 \rbrace$ can be obtained if one knows the value of the linking form on every pair of elements of the module~$\LF/\basicF_{\xi_i}^{n_i}$ that supports $\ee(n_i,\eps_i,\xi_i,\C)$.
\end{remark}
    
Summarising, given a linking form $(M,\lambda)$, Remark~\ref{rem:Algorithm} below describes an algorithm to obtain the aforementioned decomposition of $(M,\lambda)$ and to read off its signature jumps.
Thanks to this algorithm, the following theorem leads to a calculation of certain metabelian signatures of the~$(2,2k+1)$-torus knots.
 
 \begin{theorem}
  \label{thm:DecompoTwistedBlanchfieldIntro}
For any $k>0$ there are $2k+1$ characters $H_1(\Sigma_2(T_{2,2k+1})) \to \Z_{2k+1}$ which are denoted by $\chi_\theta$ for $\theta=0,\ldots,2k$. 
Each of the unitary representations $\alpha(2,\chi_{\theta})$ is acyclic and, for $\theta=1,\ldots,k$, the representation $\alpha(2,\chi_{\theta})$ is conjugate to $\alpha(2,\chi_{2k-\theta})$.

Set \(\xi = \exp\left(\frac{2 \pi i}{2k+1}\right)\).
 For  \(1 \leq \theta \leq k\), there exists an isometry
  \begin{align*}
    \Bl_{\alpha(2,\chi_\theta)}(T_{2,2k+1}) \cong \lambda_{\theta}^{even} \oplus \lambda_{\theta}^{odd},
  \end{align*}
  where the linking forms~$\lambda_{\theta}^{even}$ and~$\lambda_{\theta}^{odd}$ are as follows:
  \begin{align*}
    \lambda_{\theta}^{odd} &= \bigoplus_{\stackrel{1 \leq e \leq k}{2 \nmid \theta + e}} \left(\mathfrak{e}(1,1, \xi^{e},\C) \oplus \mathfrak{e}(1,-1,\xi^{-e},\C)\right),
  \end{align*}
  and
  \begin{align*}
    \lambda_{\theta}^{even} &= \bigoplus_{\stackrel{1 \leq e < \theta}{2 \mid \theta+e}}\left(\mathfrak{e}(1,1,\xi^{e}, \C) \oplus \mathfrak{e}(1,-1,\xi^{-e},\C)\right) \oplus \\
                 &\bigoplus_{\stackrel{\theta < e \leq k}{2 \mid \theta+e}}\left(\mathfrak{e}(1,-1,\xi^{e}, \C) \oplus \mathfrak{e}(1,1,\xi^{-e},\C)\right).
  \end{align*}
\end{theorem}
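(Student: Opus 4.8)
The plan is to compute the module underlying $\Bl_{\alpha(2,\chi_\theta)}(T_{2,2k+1})$ together with the pairing on it, and then to match the answer against the classification of linking forms recalled in Subsection~\ref{sec:SignaturesOfLinkingForms}. First I would record the topological input: $T_{2,2k+1}$ is a two-bridge knot whose double branched cover is the lens space $L(2k+1,1)$, so $H_1(\Sigma_2(T_{2,2k+1}))\cong\Z/(2k+1)$, and the characters to $\Z_{2k+1}$ are exactly the $\chi_\theta$, $\theta=0,\dots,2k$, where $\chi_\theta$ sends a fixed generator to $\theta$. Acyclicity of $\alpha(2,\chi_\theta)$ for $\theta\neq 0$ amounts to the nonvanishing of the associated twisted Alexander polynomial, which I would verify directly from the explicit representation; the conjugacy $\alpha(2,\chi_\theta)\cong\alpha(2,\chi_{2k-\theta})$ reflects the symmetry $\theta\mapsto 2k-\theta$ of the index set, and I would establish it by exhibiting an explicit conjugating matrix.

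Second, I would determine the module $H_1(M_K;\LC^2_{\alpha(2,\chi_\theta)})$ and its primary decomposition. As $\alpha(2,\chi_\theta)$ is induced from an index-two subgroup, Shapiro's lemma identifies this homology with the $\chi_\theta$-twisted first homology of the relevant cover of $M_K$; for $T_{2,2k+1}$ this is fully computable from a genus-$k$ Seifert surface, equivalently from the $2k\times 2k$ Seifert matrix. The outcome should be a direct sum of $2k$ cyclic modules $\LC/\basicC_{\xi^{\pm e}}$ with $1\le e\le k$, each $e$ contributing the pair $\xi^{e},\xi^{-e}$. This forces every exponent $n_i$ in~\eqref{eq:splittingIntro} to equal $1$ and pins down the eigenvalues to $\xi^{\pm e}$, so by Remark~\ref{rem:SignatureComputable} only the signs $\eps_i$ and the assignment of each summand to $\lambda_\theta^{even}$ or $\lambda_\theta^{odd}$ remain.

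Third, I would read off the even/odd splitting as the decomposition of the covering-involution action on the twisted homology into its $+1$ and $-1$ eigenspaces, indexed by the parity of $\theta+e$: this is what sends $\xi^e$ to $\lambda_\theta^{odd}$ when $2\nmid\theta+e$ and to $\lambda_\theta^{even}$ when $2\mid\theta+e$. The signs $\eps_i$ are then computed, as in Remark~\ref{rem:SignatureComputable}, by evaluating the sesquilinear pairing on explicit generators of $\LC/\basicC_{\xi^{\pm e}}$.

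The main obstacle is this sign computation, and in particular the sign reversal inside $\lambda_\theta^{even}$ at the threshold $e=\theta$: for $e<\theta$ the generator of $\LC/\basicC_{\xi^{e}}$ should pair with positive sign, whereas for $e>\theta$ the sign flips. I expect to handle this by combining the surgery/satellite description of $\Bl_\beta$ from~\cite{BCP_Top} with a diagonalisation of the twisted intersection form on the Seifert surface, reducing each sign to the sign of a Gauss-sum-type quantity in $\xi$ whose change of sign as $e$ crosses $\theta$ can be read off directly (concretely, to the sign of an expression such as $\iim(\xi^{\theta-e})$, i.e.\ the comparison of the angles $2\pi e/(2k+1)$ and $2\pi\theta/(2k+1)$). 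Once each summand is matched to $\ee(1,\pm1,\xi^{\pm e},\C)$ with the correct sign and parity, assembling them yields $\lambda_\theta^{odd}$ and $\lambda_\theta^{even}$, and the isometry follows from the uniqueness of the decomposition~\eqref{eq:splittingIntro}.
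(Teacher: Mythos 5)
Your opening claims (enumeration of characters, acyclicity via nonvanishing of the twisted Alexander polynomial, conjugacy $\alpha(2,\chi_\theta)\cong\alpha(2,\chi_{2k-\theta})$ by an explicit matrix) are fine and match the paper (cf.\ Remark~\ref{rem:HalfCharacters}). But your predicted module is wrong, and this error propagates. The twisted (co)homology is \emph{not} supported at all $\xi^{\pm e}$, $1\le e\le k$: Proposition~\ref{prop:TwistedModuleComputation} shows $H^{2}(M_{T_{2,2k+1}};\LC^2_{\rho_\theta})\cong \LC/(\Delta_\theta(t))$ with $\Delta_\theta(t)=t^{-k}P_{2k}(t)/\basicR_{\xi^{\theta}}(t)=\prod_{i\ne\theta}\basicR_{\xi^{i}}(t)$, so the factor at $e=\theta$ drops out and there are $2(k-1)$ cyclic summands, not $2k$. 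This is forced by the statement you are proving: neither $\lambda_\theta^{odd}$ nor $\lambda_\theta^{even}$ contains a summand at $\xi^{\pm\theta}$ (the condition $2\nmid\theta+e$ excludes $e=\theta$, and the even sum explicitly skips it). Had your Seifert-matrix computation produced the module you announce, the uniqueness in Theorem~\ref{thm:decompose} would contradict the theorem rather than establish it.

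The deeper gap is in the sign computation, which is precisely where the work lies. Your identification of the even/odd split with $\pm1$-eigenspaces of the covering involution is asserted without justification and plays no role in the actual argument: in the paper the parity of $\theta+e$ enters analytically, not structurally. Concretely, the paper runs Powell's chain-level algorithm of Subsection~\ref{sub:review_powell} (presentation of $\pi_1(M_{T_{2,2k+1}})$, Fox calculus, the duality chain map $\Phi$, explicit cocycle representatives $v_2$, $Z_1$) to obtain a closed formula $\Bl^{\rho_\theta}([v_2],[v_2])=F(t)=G(t)/(t^{-k}P_{2k}(t))$, and then Lemma~\ref{lem:GxiPositive} evaluates $G(\xi^e)=-4\sin\bigl(\tfrac{\pi k(\theta+e)}{2k+1}\bigr)\sin\bigl(\tfrac{\pi k(\theta-e)}{2k+1}\bigr)$, where the reduction $\sin(y+n\pi)=(-1)^n\sin(y)$ makes the sign depend \emph{jointly} on the parity of $\theta+e$ and on the comparison $e\lessgtr\theta$ (with Lemma~\ref{lem:HxiPositive} handling the denominator). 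Your proposed criterion, the sign of an expression like $\iim(\xi^{\theta-e})$, captures only the angle comparison and cannot reproduce the parity dependence: in the odd-parity case the sign at $\xi^{e}$ is $+1$ for every $e$, whether $e<\theta$ or $e>\theta$, while the threshold flip at $e=\theta$ occurs only in the even part. Finally, the tool you invoke to evaluate the pairing --- diagonalising a ``twisted intersection form on the Seifert surface'' combined with the satellite formulas of~\cite{BCP_Top} --- is not established for metabelian coefficients in this paper or its prerequisites; the satellite formulas decompose Blanchfield forms of satellites, they do not compute the form of the torus-knot pattern itself, which is exactly the base case Section~\ref{sec:ExplicitComputations} must (and does) compute at the chain level. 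As written, the proposal defers the one genuinely hard step to an unproven reduction and predicts an incorrect answer for it.
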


Thanks to Theorem~\ref{thm:DecompoTwistedBlanchfieldIntro} and to our satellite formula for metabelian Blanchfield forms (see Theorem~\ref{thm:metabelian-cabling-formula}), we are able to calculate twisted signatures of linear combinations of iterated torus knots of the form $T_{2,q_1;2,q_2;\ldots;2,q_n}$.
Here, \(T_{\ell,d;r,s}\) denotes the \((r,s)\)-cable of the \((\ell,d)\)-torus knot.
As should be apparent from Theorem~\ref{thm:DecompoTwistedBlanchfieldIntro}, a general formula for the twisted signatures of such knots is prone to be quite unruly.
Therefore instead of stating such a result, we illustrate our methods by obstructing the sliceness of $T_{2,3;2,13} \# T_{2,15} \# -T_{2,3;2,15} \# -T_{2,13}$, an example already considered by Hedden, Kirk and Livingston~\cite{HeddenKirkLivingston}.

\subsection{The algorithm}
\label{sub:Algorithm}

Finally, we describe how to compute the twisted signature jumps of a knot
associated to representations that take values in $\GL_d(\C[t^{\pm 1}])$
 (e.g.\ for the metabelian representation~$\alpha(n,\chi)$).
In particular, this is the procedure we follow to prove Theorem~\ref{thm:DecompoTwistedBlanchfieldIntro}.

Fix a unitary acyclic representation $\beta \colon \pi_1(M_K) \to GL_d(\LC)$. 
As was alluded to in Remark~\ref{rem:SignatureComputable}, the twisted signature jumps can be computed if one knows the (primary decomposition of the) twisted Alexander module $H_1(M_K;\LC^d_{\beta})$ and the value of the twisted Blanchfield pairing~$\Bl_{\beta}(K)$ on every pair of elements on this module; the latter is possible thanks to work of Powell~\cite{PowellThesis} that we recall in Subsection~\ref{sub:review_powell}.
The procedure to compute the twisted signature~$\sigma_{K,\beta}$ is now as follows.

\begin{description}
\item[Step 0] 
Fix a handle decomposition of the $0$-surgery~$M_K$, consider the induced presentation~$\langle x_1,\ldots,x_n \mid r_1,\ldots,r_{n-1} \rangle$ of~$\pi_1(M_K)$,  and calculate the Fox derivatives $\partial x_i/\partial x_j$.
Note that a handle decomposition of $M_K$ arises from a Wirtinger presentation of $\pi_1(X_K)$, as explained in~\cite[Section 3.1]{MillerPowell}.
\item[Step 1] Determine the primary decomposition of the twisted Alexander module over~$\LC$.

For some positive integers~$n_i$ and some~$\xi_i \in S^1$,
the primary decomposition of the torsion~$\LC$-module~$H_1(M_K,\LC^d_{\beta})$ takes the form
\begin{equation}
\label{eq:PrimaryDecompositionModule}
 H_1(M_K,\LC^d_{\beta})=\bigoplus_{\xi_i,n_i} \LC/(t-\xi_i)^{n_i} \oplus H',
 \end{equation}
 where $H'$ is a $\LC$-module on which multiplication by $(t-\xi)$
 is an isomorphism for all $\xi\in S^1$.
The parameters~$\xi_i \in \C$ and~$n_i \in \N_{>0}$ that appear in this decomposition determine the~$\xi_i$ and the~$n_i$ from the $\mathfrak{e}$-forms in the decomposition of~$\Bl_{\beta}(K)$ displayed in~\eqref{eq:splittingIntro}.
Thus, the twisted Alexander module determines the mod~$2$ value of the twisted signature jumps.
This step can be carried out using the Fox derivatives that were calculated in Step~0.
\item[Step 2] Determine the data needed to calculate the twisted Blanchfield pairing~$\Bl_{\beta}(K)$. 

Let~$(C^*(\widetilde{M}_K), \delta)$ be the cochain complex of left~$\Z[\pi_1(M_K)]$-modules of the universal cover~$\widetilde{M}_K$.
Use the presentation for~$\pi_1(M_K)$, the Fox derivatives and~\cite[Equation following Theorem 3.15]{MillerPowell}  to obtain a matrix for the~$\Z[\pi_1(M_K)]$-linear symmetric structure map~$\Phi \colon C^2(\widetilde{N}) \to C_1(\widetilde{N})$
(i.e.\ informally speaking ``Poincaré duality on the chain level"; see Remark~\ref{rem:CellularHandle} for details).
Passing to twisted chain complexes,~$\delta$ and~$\Phi$ induce maps
\begin{align*}
&\beta(\delta) \colon C^1(M_K;\LC^d_{\beta}) \to C^{2}(M_K;\LC^d_{\beta}), \\
&\beta(\Phi) \colon C^2(M_K;\LC^d_{\beta}) \to C_1(M_K;\LC^d_{\beta}).
\end{align*}
Matrices are assumed to act on row vectors from the right and the cohomological differentials are determined by the homological differentials via the formula~\(\beta(\delta^{i}) = (-1)^{i} \beta(\partial_{i})^{\#T}\), where $\#$ denotes the involution on $\LC$ given by $p(t) \mapsto \overline{p}(t^{-1})$, as described at the end of the introduction.
The twisted Blanchfield pairing is isometric to the pairing
  \begin{align*}
 H^2(N;\LC^d_{\beta}) \times H^2(N;\LC^d_{\beta}) &\to \C(t)/\LC \nonumber \\ 
    ([v],[w]) & \mapsto \frac{1}{s} \left(v \cdot \beta(\Phi) \cdot Z^{\# T}\right)^{\# T},
  \end{align*}
  where~$v,w \in C^2(M_K;\LC^d_{\beta})$ and~$Z \in C^1(M_K;\LC^d_{\beta})$ satisfies~$Z\beta(\delta)=sw$ for some~$s \in~\LC \setminus \lbrace 0 \rbrace$; see \eqref{eq:ChainBlanchfieldFormula}.
\item[Step 3]
Determine the twisted Blanchfield pairing on the generators of the cyclic summands~$\LC/(t-\xi_i)^{n_i}$. More precisely, determine the signs~$\epsilon_i$ in the decomposition~\eqref{eq:splittingIntro} of the twisted Blanchfield pairing.

Given~$\xi\in S^1$, a complex polynomial~$r(t)$ is called \emph{$\xi$-positive} if~$(t^{-1}-\overline{\xi})r(t)$ is a complex symmetric polynomial and the function
$\theta\mapsto (e^{-i\theta}-\overline{\xi})r(e^{i\theta})$ changes sign from positive to negative as~$\theta$ crosses the value~$\theta_0$  for which~$e^{i\theta_0}=\xi$.
Let~$1_{\xi_i,n_i}$ be a generator of the cyclic module~$\LC/(t-\xi_i)^{n_i}$ displayed in~\eqref{eq:PrimaryDecompositionModule}.
Write the expression for~$\Bl_{\beta}(K)(1_{\xi_i,n_i},1_{\xi_i,n_i})$ obtained in Step~$2~$ as 
$$\Bl_{\beta}(K)(1_{\xi_i,n_i},1_{\xi_i,n_i})=\frac{h}{(t-\xi_i)^{n_i}},$$
where we think of~$h$ as an element of~$\LC$ determined modulo~$f:=(t-\xi_i)^{n_i}$.
With these notations,~$\epsilon_i=1$ if~$\frac{h}{f}(t-\xi_i)^{(n_i+1)/2}(t^{-1}-\overline{\xi}_i)^{(n_i-1)/2}$ is~$\xi_i$-positive, and~$\epsilon_i=-1$ otherwise.
\item[Step 4] Collect the data from the previous steps and compute the signature jumps.

Using Steps~$1$ and~$3$, one can read off the decomposition of~$\Bl_{\beta}(K)$ displayed in~\eqref{eq:splittingIntro}: Step~$1$ provides the parameters~$n_i \in \N_{>0}$ and~$\xi_i \in \C$, while Step~$3$ describes how to obtain the signs~$\epsilon_i$.
The signature jump is now obtained as
$$    \ds_{(M,\lambda)}(\xi)=\sum_{\substack{n \textrm{\ odd}\\ \eps=\pm 1}} \eps \hodgep(n,\eps,\xi,\C),$$
where $\hodgep(n,\eps,\xi,\C)$ is the number of times~$\mathfrak{e}(n,\epsilon,\xi,\C)$ enters the decomposition of $(M,\lambda)$.
\end{description}

\subsection{Discussion of the algorithm}

In order to catalogue the difficulties that can arise when implementing this algorithm, we first list some familiar problems that occur in such settings but purposefully avoid the terminology of computational complexity theory (our aim is to describe the limitations of our algorithm, not analyse it formally):
\begin{itemize}
 \item[(i)]  problems that can be solved algorithmically using reasonable time and memory relative to the size of the problem;
  \item[(ii)]  problems that can be solved algorithmically using significant time and memory relative to the size of the problem;
  \item[(iii)]  problems for which there is no algorithm that produces an exact solution.
\end{itemize} 


We now our analyse our algorithm in light of these familar problems; the initial data we are given is a diagram for our knot $K$.
\begin{description}
\item[Step 0] Determining a presentation, Fox derivatives and a longitude are each problems of type (i).
\item[Step 1]  Determining the primary decomposition of the twisted Alexander module over~$\LC$ involves problems of types (i),(ii) and (iii) as we now explain in more detail.
\begin{itemize}
\item Calculating the twisted chain complex from the Fox derivatives is a problem of type~(i) whereas calculating the homology of this chain complex of $\C[t^{\pm 1}]$-modules typically involves finding the Smith normal form of the differentials; a problem of type (ii).
\item Finding the primary decomposition of a $\C[t^{\pm 1}]$-module involves finding the roots of a complex polynomial (the order of the module) and this is a problem of type (iii).
\end{itemize}
\item[Step 2] Determining the data needed to calculate the twisted Blanchfield pairing~$\Bl_{\beta}(K)$ consists of calculating further Fox derivatives, as well as finding an identity for the presentation of $\pi_1(M_K)$ (see
Section~\ref{sub:ChainComplex} for details) both of which are problems of type (i).
\item[Step 3] Determining the twisted Blanchfield pairing on the generators of the cyclic summands~$\LC/(t-\xi_i)^{n_i}$ is a problem of type (i) if exact values for the roots of the twisted Alexander polynomial are known, otherwise it is a problem of type (iii).
\item[Step 4]  Collecting the data from the previous steps and computing the signature jumps is a problem of type (i).
\end{description}

Summarising,  if one puts aside issues related to the size of the knot, the main issue with our algorithm lies in finding the roots of the twisted Alexander polynomial.

The reason we are able to make such explicit calculations with torus knots is now apparent:
the knot group~$\pi_1(X_{T_{p,q}})$ admits a presentation with two generators and one relation (so there are few Fox derivatives to calculate),  and the roots of our twisted Alexander polynomials can be determined explicitly; see Proposition~\ref{prop:TwistedModuleComputation} as well as~\cite[Proposition 3.3 and Corollary 3.4]{ConwayKimPolitarczyk}.
Note however that this require we overcome an additional difficulty: in step zero of the algorithm, we cannot use the handle decomposition of $M_{T_{p,q}}$ arising from the Wirtinger presentation of $\pi_1(X_{T_{p,q}})$ as in~\cite[Section 3.1]{MillerPowell}: in Section~\ref{sec:ExplicitComputations} we are forced to find a handle decomposition of $M_{T_{p,q}}$ that induces this presentation.
\color{black}

\subsection{Organisation}
In Section~\ref{sec:SignaturesOfLinkingForms}, we review the classification of linking forms over $\LC$ and the definition of signature jumps.
In Section~\ref{sec:TwistedBlanchfield}, we discuss the twisted Blanchfield form, we describe Powell's algorithm to calculate it on any pair of elements and we recall the definition of twisted signature jumps.
In Section~\ref{sec:MetabelianBlanchfield}, we review metabelian Blanchfield forms and their satellite formulas.
In Section~\ref{sec:Identities},  we collect some facts about identities of group presentations and the cellular chain complex of the universal cover of a $3$-dimensional CW complex.
In Section~\ref{sec:symmetric-structure},  we use identites to describe the symmetric structure on the chain complex of the universal cover of a $3$-manifold.
In Section~\ref{sec:ExplicitComputations}, we give an explicit description of the metabelian Blanchfield forms of $(2,2k+1)$-torus knots.
Finally, in Section~\ref{sec:Sliceness}, we combine these calculations with the satellite formulas to obstruct the sliceness of certain linear combinations of iterated torus knots.
\color{black}

\begin{convention}
  \label{conventions}
  If~$R$ is a commutative ring and~$f,g\in R$, we write~$f\doteq g$ if there exists a unit~$u\in R$ such that~$f=ug$.
  For a ring~$R$ with involution, we denote this involution by~$x\mapsto x^\#$; the symbol~$\ol{x}$ is reserved for the complex conjugation.
  In particular, for~$R=\LC$ the involution~$(-)^\#$ is the composition of the complex conjugation with the map~$t\mapsto t^{-1}$.
  For example, if~$p(t)=t-i$, then~$p^\#(i)=0$, but~$\ol{p}(i)=2i$.
  Given an~$R$-module~$M$, we denote by~$M^\#$ the~$R$-module that has the same underlying additive group as~$M$, but for which the action by~$R$ on~$M$ is precomposed with the involution on~$R$.
  For a matrix~$A$ over~$R$, we write~$\makeithashT{A}$ for the transpose followed by the involution.
\end{convention}

\subsection*{Acknowledgements}
We thank the referee for valuable comments, especially concerning chain level Poincar\'e duality and our choice of handle decompositions.

This paper, together with~\cite{BCP_Alg,BCP_Top}, was originally part of a single paper titled ``Twisted Blanchfield pairings and Casson--Gordon invariants". 
While that paper was being written, the second named author was at University of Geneva and later at Durham University, supported by the Swiss National Science Foundation; during a subset of the revision and splitting process, he was a visitor at the Max Planck Institute for Mathematics.
The first named author is supported by the National Science Center grant 2019/B/35/ST1/01120.
The third named author is supported by the National Science Center grant 2016/20/S/ST1/00369.
All three authors wish to thank the University of Geneva and the University of Warsaw at which part of this work was conducted.
\color{black}

\section{Signatures of linking forms}
\label{sec:SignaturesOfLinkingForms}

We recall from~\cite{BCP_Alg} the construction of certain signature invariants associated to a linking form over~$\LC$.
The main input is a classification of linking forms over~$\LC$ which we also recall.

\subsection{Classification of linking forms}
\label{sub:classify}
A \emph{linking form} will refer to a pair~$(M,\lambda)$, where~$M$ is a torsion~$\LC$-module and~$ \lambda\colon M \times M\to \C(t)/\LC$ is a non-degenerate sesquilinear Hermitian pairing. 
In order to state our classification of linking forms, we recall some terminology.
\begin{definition}\label{def:positive}
  Given~$\xi\in S^1$, a complex polynomial~$r(t)$ is called \emph{$\xi$-positive} if~$(t^{-1}-\ol{\xi})r(t)$ is a complex symmetric polynomial and the function
  $\theta\mapsto (e^{-i\theta}-\ol{\xi})r(e^{i\theta})$ changes sign from positive to negative as~$\theta$ crosses the value~$\theta_0$  for which~$e^{i\theta_0}=\xi$.
\end{definition}

While it is helpful to work with arbitrary~$\xi$-positive polynomials, having a concrete example sometimes also proves useful:
if~$\xi\neq\pm 1$ then~$r(t)=1-\xi t$ is~$\xi$-positive if~$\iim(\xi)>0$ and~$-(1-\xi t)$ is~$\xi$-positive if~$\iim(\xi)<0$.
Next, we refer to
\[\basicC_{\xi}(t) =
  \begin{cases}
    (t-\xi), & \text{if } |\xi|=1,\\
    (t-\xi)(t^{-1}-\xi), & \text{if } |\xi| \in (0,1),
  \end{cases}\]
as \emph{basic polynomials}, and define the building blocks for our classification of linking forms.

\begin{definition}
  Fix an integer \(n>0\) and \(\eps = \pm1\).
  For~$|\xi|=1$, the \emph{basic linking form}~$\ee(n,\eps,\xi,\C)$ is defined as
  \begin{align}
    \LC / \basicC_{\xi}(t)^n \times \LC / \basicC_{\xi}(t)^n &\to \OC/\LC, \nonumber\\
    (x,y) &\mapsto \frac{\eps x y^{\#}}{\basicC_{\xi}(t)^{\frac{n}{2}}\basicC_{\bar{\xi}}(t^{-1})^{\frac{n}{2}}}, \quad \ \ \ \ \  \text{if~$n$ is even}, \label{eq:e_n_k_form_complex_even}\\
    (x,y) &\mapsto \frac{\eps r(t) x y^{\#}}{\basicC_{\xi}(t)^{\frac{n+1}{2}}\basicC_{\bar{\xi}}(t^{-1})^{\frac{n-1}{2}}}, \quad \text{if~$n$ is odd} \label{eq:e_n_k_form_complex_odd}
  \end{align}
  where~$r(t)$ is a~$\xi$-positive polynomial\footnote{The isometry type of~$\ee(n,\eps,\xi,\C)$ does not depend on the choice of~$r$.}.
  For~$|\xi|<1$, the \emph{basic linking form}~$\ff(n,\xi,\C)$ is
  \begin{align}
    \LC / \basicC_{\xi}(t)^n \times \LC / \basicC_{\xi}(t)^n &\to \OC/\LC, \nonumber \\
    (x,y) &\mapsto \frac{x y^{\#}}{\basicC_{\xi}(t)^{n}}.\label{eq:f_n_k_form_complex}
  \end{align}  
\end{definition}

We can now state the classification of linking forms over~$\LC$ that was proved in~\cite{BCP_Alg}.

\begin{theorem}\label{thm:decompose}
  Every linking form~$(M,\lambda)$ over~$\LC$ can be presented as a direct sum
  \[(M,\lambda)=
    \bigoplus_{\substack{n_i,\eps_i,\xi_i\\ i\in I}}\ee(n_i,\eps_i,\xi_i,\C)\oplus
    \bigoplus_{\substack{\xi_j\\ j\in J}}\ff(n_j,\xi_j,\C)\]
  for some finite set of indices~$I$ and~$J$. Here the~$n_i\ge 0$ are integers, the~$\eps_i$ equal~$\pm 1$ and the~$\xi_i$ and $\xi_j$ are non-zero complex numbers. The presentation is unique up
  to permuting summands.
\end{theorem}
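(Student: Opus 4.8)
The plan is to reduce the classification to a local, prime-by-prime problem and then to a rank-one problem, invoking the structure theorem for torsion modules over the PID~$\LC$ at each stage. I would begin by recording the action of the involution on the linear primes of~$\LC$: a direct computation gives $(t-\xi)^\#\doteq(t-\ol{\xi}^{-1})$, so $(t-\xi)$ is fixed by~$\#$ up to units exactly when $|\xi|=1$, while for $|\xi|\neq1$ the primes $(t-\xi)$ and $(t-\ol{\xi}^{-1})$ form a two-element orbit (one inside and one outside the unit circle). Writing $M=\bigoplus_p M_p$ for the primary decomposition, sesquilinearity forces $\lambda(M_p,M_q)=0$ unless $q\doteq p^\#$: if $p^N x=0$ and $q^L y=0$ then $p^N\lambda(x,y)=\lambda(p^Nx,y)=0$ and $(q^\#)^L\lambda(x,y)=\lambda(x,q^Ly)=0$, so $\lambda(x,y)$ lies in both the $p$-primary and the $q^\#$-primary summand of~$\OC/\LC$, hence vanishes unless $q\doteq p^\#$. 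Consequently $\lambda$ splits orthogonally along the $\#$-orbits of primes, and it suffices to classify the form supported on a single orbit.

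Second, I would dispose of the conjugate-pair orbits $\{p,p^\#\}$ with $|\xi|\neq1$. Here both $M_p$ and $M_{p^\#}$ are $\lambda$-isotropic, so non-degeneracy of the restriction to $M_p\oplus M_{p^\#}$ says precisely that $\lambda$ induces a perfect pairing between $M_p$ and $M_{p^\#}$; thus $M_{p^\#}$ is the $\#$-twisted dual of $M_p$, and the structure theorem forces the two components to share the same list of elementary divisors (carried across by~$\#$). Choosing a cyclic decomposition of $M_p$ together with its dual basis in $M_{p^\#}$, and absorbing units to normalise the off-diagonal entries, I would split this orbit as an orthogonal sum of rank-one hyperbolic blocks on $\LC/\basicC_\xi^{n}$, each isometric to $\ff(n,\xi,\C)$. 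No sign invariant arises, because these blocks are hyperbolic and the involution exchanges the two primes rather than fixing a residue field.

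Third comes the heart of the matter: the self-conjugate orbits $p=(t-\xi)$ with $|\xi|=1$. Localising at~$p$ yields a discrete valuation ring whose involution fixes the closed point and acts on the residue field~$\C$ by complex conjugation (indeed $p^\#(\xi)=\ol{p(\xi)}$ since $\ol{\xi}^{-1}=\xi$), so its fixed field is~$\R$; this real structure is exactly what produces a signature-type invariant. I would then run an inductive diagonalisation: among the cyclic generators of $M_p$, pick one, say~$x$, on which the self-pairing $\lambda(x,x)$ is as close to a unit as possible (a vector on which the form is \emph{unimodular} in the local sense). Non-degeneracy guarantees such an~$x$ spans an orthogonal direct summand, so $M_p=\langle x\rangle\oplus\langle x\rangle^{\perp}$, and one inducts on the complement. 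It remains to identify $\langle x\rangle$, after clearing denominators and rescaling $x$ by a unit, with a basic form $\ee(n,\eps,\xi,\C)$; the odd case is where the $\xi$-positive polynomial $r(t)$ enters as a normalising factor, and the sign $\eps=\pm1$ is read off according to whether the resulting self-pairing is $\xi$-positive, exactly as in the recipe of Step~3 of the algorithm above.

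Finally, uniqueness splits into a module part and a form part. The multiset of pairs $(n_i,\xi_i)$ is determined by the isomorphism type of $M$ as an $\LC$-module, via the uniqueness clause of the structure theorem. The signs $\eps_i$ attached to the odd circle summands are then the only remaining data, and they are pinned down by a local signature, equivalently by a Witt-cancellation argument over the DVR with involution: one shows that $\ee(n,+1,\xi,\C)$ and $\ee(n,-1,\xi,\C)$ are non-isometric and that a rank-one summand cannot be absorbed into the complement without changing this invariant. \textbf{The main obstacle} I anticipate is precisely this self-conjugate local analysis --- both establishing that a unimodular vector spans an orthogonal summand (so that the diagonalisation terminates) and proving that $\eps$ is a genuine isometry invariant independent of all the choices (generator, unit rescaling, and the $\xi$-positive polynomial~$r$). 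This amounts to Hermitian Witt cancellation over a local ring with involution and residue field~$\C$, and carrying it out carefully, while keeping track of the $\xi$-positivity normalisation that makes the sign canonical, is where the real work lies.
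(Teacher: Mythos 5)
Your outline follows, in its architecture, the same route as the actual proof, which this paper does not contain: Theorem~\ref{thm:decompose} is quoted from~\cite{BCP_Alg}. The stages you propose --- orthogonal splitting of $\lambda$ along the $\#$-orbits of primes of $\LC$, hyperbolic treatment of the swapped orbits $\{(t-\xi),(t-\ol{\xi}^{-1})\}$ with $|\xi|\neq 1$ yielding the $\ff$-summands with no sign invariant, orthogonalisation into cyclic pieces at the self-conjugate primes $|\xi|=1$ (this is precisely the orthogonalisation lemma the paper invokes in Remark~\ref{rem:Algorithm} via~\cite{BorodzikFriedl2}), normalisation of the cyclic forms by $\xi$-positive polynomials, and uniqueness from the structure theorem plus sign invariants --- are the stages of the proof in~\cite{BCP_Alg}.

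Two caveats. First, the step you yourself flag as the main obstacle is genuinely the crux and is left unproved in your sketch: ``non-degeneracy guarantees such an $x$ spans an orthogonal direct summand'' is not automatic. One must show that the minimal valuation of $\lambda(x,y)$ over all pairs is already attained on the diagonal, which uses a polarisation argument exploiting that the residue involution on $\C$ is nontrivial (conjugation, fixed field $\R$): if $\lambda(x,y)$ has minimal valuation with nonzero leading coefficient $c$, then one of $x$, $y$, $x+y$, $x+iy$ has self-pairing of the same valuation, since $\re(c)$ or $\re(ic)$ is nonzero; only then does the generated cyclic module split off orthogonally and the induction terminate. Naming Hermitian Witt cancellation is the right instinct, but the argument has to be run. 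Second, a concrete slip in your uniqueness plan: the signs $\eps_i$ are attached to \emph{all} $\ee$-summands, including those with $n$ even (see~\eqref{eq:e_n_k_form_complex_even}), whereas you describe ``the signs attached to the odd circle summands'' as the only remaining data. Signature-jump-type invariants are blind to even $n$ (cf.~\eqref{eq:JumpIntro}), so by your plan the even-$n$ signs would be left unaccounted for in the uniqueness statement. The repair is easy but necessary: for even $n$ the denominator $\basicC_{\xi}(t)^{n/2}\basicC_{\ol{\xi}}(t^{-1})^{n/2}$ is non-negative on $S^1$, and rescaling a generator by a unit $u$ multiplies the numerator by $uu^{\#}$, whose values near $\xi$ on $S^1$ are $|u|^{2}>0$; hence the sign of the real-valued numerator at $\xi$ is a well-defined isometry invariant recovering $\eps$ in the even case. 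With that addition and the splitting lemma carried out, your proposal matches the cited proof.
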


\subsection{Signatures of linking forms}\label{sub:effective}
\label{sub:SignaturesOfLinkingForms}
Given~$n \geq 0,\xi \in \C$ and~$\epsilon=\pm 1$, we define the \emph{Hodge number}~$\mathcal{P}(n,\epsilon,\xi,\C)$ of a linking form~$(M,\lambda)$ over~$\LC$ as the number of times~$\mathfrak{e}(n,\epsilon,\xi,\C)$ enters the decomposition in~Theorem~\ref{thm:decompose}.
\begin{definition}
  \emph{The signature jump} of a linking form~$(M,\lambda)$ over~$\LC$ at~$\xi \in S^1$ is defined as the following integer:
  \begin{equation}
    \label{eq:JumpIntro}
    \ds_{(M,\lambda)}(\xi)=\sum_{\substack{n \textrm{\ odd}\\ \eps=\pm 1}} \eps \hodgep(n,\eps,\xi,\C).
  \end{equation}
\end{definition}

We collect some properties of signature jumps for later use; details can be found in~\cite{BCP_Alg}.

\begin{theorem}\label{thm:witt_equivalence}
  Signature jumps satisfy the following properties:
  \begin{enumerate}
  \item The signature jump of~$(M,\lambda)$ vanishes at~$\xi \in S^1$ if~$\xi$ is not a root of the order~$\Delta_M$ of~$M$. 
  \item The signature jump is additive: if~$(M_1,\lambda_1)$ and~$(M_2,\lambda_2)$ are linking forms, then the following equality holds for every~$\xi \in S^1$:
    $$\delta\sigma_{(M_1 \oplus M_2,\lambda_1 \oplus \lambda_2)}(\xi)=\delta\sigma_{(M_1,\lambda_1)}(\xi)+\delta\sigma_{(M_2,\lambda_2)}(\xi).$$
  \item A linking form is metabolic if and only if all its signature jumps vanish.\footnote{Recall that~$(M,\lambda)$ is \emph{metabolic} if there exists a submodule~$L \subset M$ such that~$P=P^\perp$.}
  \end{enumerate}
\end{theorem}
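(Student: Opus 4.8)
The plan is to treat the three assertions in increasing order of difficulty, using only the classification of Theorem~\ref{thm:decompose} as structural input and reducing everything to the behaviour of the basic forms~$\ee$ and~$\ff$. Parts~(1) and~(2) are essentially bookkeeping consequences of the \emph{uniqueness} clause of that theorem. For~(1), I would first identify the order: since~$\LC$ is a PID and the decomposition is into cyclic modules~$\LC/\basicC_{\xi}(t)^{n}$, the order~$\Delta_M$ equals, up to a unit, the product of the polynomials~$\basicC_{\xi_i}(t)^{n_i}$ and~$\basicC_{\xi_j}(t)^{n_j}$ over all summands. For~$|\xi|=1$ one has~$\basicC_\xi(t)=t-\xi$, so the roots of~$\Delta_M$ lying on~$S^1$ are exactly the~$\xi_i$ indexing the~$\ee$-summands with~$|\xi_i|=1$ (the~$\ff$-summands contribute roots off the circle). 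Hence if~$\xi\in S^1$ is not a root of~$\Delta_M$, no summand~$\ee(n,\eps,\xi,\C)$ occurs, every Hodge number~$\hodgep(n,\eps,\xi,\C)$ vanishes, and the defining sum of~$\ds_{(M,\lambda)}(\xi)$ is empty. For~(2), concatenating the decompositions of~$(M_1,\lambda_1)$ and~$(M_2,\lambda_2)$ gives a decomposition of the orthogonal sum, so uniqueness yields~$\hodgep_{M_1\oplus M_2}=\hodgep_{M_1}+\hodgep_{M_2}$ for every triple~$(n,\eps,\xi)$, and substituting into the definition of~$\ds$ gives additivity.

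Part~(3) is the substance, and I would begin by isolating an auxiliary claim: \emph{(i)} every~$\ff(n,\xi,\C)$ is metabolic; \emph{(ii)} every~$\ee(2m,\eps,\xi,\C)$ is metabolic; and \emph{(iii)} for odd~$n,m$ and~$|\xi|=1$ the sum~$\ee(n,\eps,\xi,\C)\oplus\ee(m,-\eps,\xi,\C)$ is metabolic. For~(i) and~(ii) I would exhibit the Lagrangian~$L=\basicC_\xi(t)^{\lceil n/2\rceil}M$: multiplying a generator by the half-power of the basic polynomial clears the denominator of the form, producing an isotropic submodule whose order is the square root of~$|M|$, hence a Lagrangian in the nonsingular form. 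Each of the forms in~(i)--(ii) then contributes~$0$ to every signature jump (even~$n$ is not counted, and~$\ff$ carries no odd~$\ee$-summand on~$S^1$).

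For the forward direction of~(3), suppose~$(M,\lambda)$ is metabolic with~$L=L^\perp$, and fix~$\xi\in S^1$. I would localise at the prime~$(t-\xi)$, which is self-dual because~$(t-\xi)^\#\doteq(t-\xi)$ when~$|\xi|=1$, and pass to the nonsingular Hermitian form that~$\lambda$ induces on the associated~$\C$-vector space over the residue field~$(\C,\text{conjugation})$. The~$\xi$-positivity normalisation in Definition~\ref{def:positive} is designed precisely so that the signature of this residue Hermitian form equals~$\ds_{(M,\lambda)}(\xi)$. A Lagrangian~$L$ induces a Lagrangian of the residue form, forcing its signature, and hence~$\ds_{(M,\lambda)}(\xi)$, to vanish. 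Conversely, assume all jumps vanish and decompose~$(M,\lambda)$ via Theorem~\ref{thm:decompose}: the~$\ff$- and even-$\ee$-summands are metabolic by~(i)--(ii), while vanishing of~$\ds(\xi)$ says that at each~$\xi$ the total number of positive odd~$\ee$-summands equals the number of negative ones. Pairing them arbitrarily, each pair~$\ee(n,+1,\xi,\C)\oplus\ee(m,-1,\xi,\C)$ is metabolic by~(iii), and an orthogonal sum of metabolic forms is metabolic.

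The main obstacle is part~(iii) of the auxiliary claim when~$n\neq m$, since the diagonal Lagrangian used for~$n=m$ no longer lands in a submodule of the correct order. Here I would search for an isotropic generator of the shape~$v=\basicC_\xi(t)^{a}x+c\,\basicC_\xi(t)^{b}y$, with~$x,y$ the two generators and~$c$ a unit, of order~$\basicC_\xi(t)^{(n+m)/2}$. After clearing denominators, the isotropy condition collapses to a single equation over the residue field~$\C$ expressing that a positive- and a negative-definite one-dimensional Hermitian form over~$(\C,\text{conjugation})$ together span a hyperbolic plane; solvability for~$|c|^2>0$ is exactly what the opposite signs~$\eps,-\eps$ guarantee. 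The one computation that must be carried out carefully is checking that this~$v$ generates a submodule of the correct order and is genuinely self-annihilating in~$\OC/\LC$, rather than isotropic only to leading order; everything else reduces to bookkeeping against Theorem~\ref{thm:decompose}.
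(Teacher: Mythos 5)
Your parts (1) and (2) are correct, and note that for this theorem the paper itself offers no argument at all: it explicitly outsources the proof to~\cite{BCP_Alg}, so your proposal must be judged as a reconstruction of that outsourced proof. The skeleton of your part (3) --- reduce to the basic forms via the uniqueness in Theorem~\ref{thm:decompose}, show $\ff$'s and even $\ee$'s are metabolic, and cancel odd summands in pairs --- is indeed the right architecture, and your worried-about step (iii) does go through: taking the \emph{same} $\xi$-positive polynomial $r$ in both summands (legitimate, since the isometry type is independent of the choice) and, for $n>m$, the element $v=(t-\xi)^{(n-m)/2}x+cy$ with $|c|=1$, the two self-pairings cancel \emph{exactly} in $\OC/\LC$, not merely to leading order, and $v$ has order $(t-\xi)^{(n+m)/2}$, so $\langle v\rangle$ is a Lagrangian by the order count $|L|\cdot|L|^{\#}\doteq|M|$.

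There are, however, two genuine gaps. The smaller one: your Lagrangian $L=\basicC_{\xi}(t)^{\lceil n/2\rceil}M$ for $\ff(n,\xi,\C)$ fails when $n$ is odd, since then $L$ has order $\basicC_{\xi}^{(n-1)/2}$ and $L\subsetneq L^{\perp}$; because $\basicC_{\xi}$ factors (for $|\xi|<1$) into two coprime, mutually conjugate primes, the correct Lagrangian for every $n$ is a primary component, e.g.\ $(t-\xi)^{n}M$. The serious one is your forward direction. There is no single nonsingular Hermitian form ``induced by $\lambda$ on the residue field'' whose signature equals $\ds_{(M,\lambda)}(\xi)$: the natural candidate, the restriction of $\lambda$ to $\ker(t-\xi)$, vanishes identically on every summand $\ee(n,\eps,\xi,\C)$ with $n\geq 2$ --- for odd $n\geq 3$ the self-pairing of $(t-\xi)^{n-1}\cdot 1$ equals $\eps\, r(t)(t-\xi)^{(n-3)/2}(t^{-1}-\ol{\xi})^{(n-1)/2}$, which already lies in $\LC$ --- so its signature computes only $\hodgep(1,1,\xi,\C)-\hodgep(1,-1,\xi,\C)$, not the jump. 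If you instead repair this with the graded family of residue forms (one for each $n$, \`a la Milnor), then the claim ``a Lagrangian induces a Lagrangian of the residue form'' becomes false gradewise, and your own step (iii) supplies the counterexample: $\ee(1,1,\xi,\C)\oplus\ee(3,-1,\xi,\C)$ is metabolic, yet its $n=1$ residue form has signature $+1$. Metabolicity only forces the alternating sum \emph{across} the odd $n$'s to vanish, and proving exactly that requires the dévissage/Witt-group computation carried out in~\cite{BCP_Alg}; as written, your mechanism would ``prove'' that each odd-$n$ contribution vanishes separately, contradicting the metabolic pairs you construct in (iii).
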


Next, we describe how to calculate signature jumps algorithmically.
This underlies several of the steps of the algorithm from Subsection~\ref{sub:Algorithm} of the introduction.
\begin{algorithm}
  \label{rem:Algorithm}
  Given a linking form~$(H,\lambda)$, here is how to calculate~$\delta \sigma_{(H,\lambda)}(\xi)$ for~$\xi\in S^1$.
  \begin{enumerate}
  \item Determine the~$(t-\xi)$-primary summand~$H_\xi$ of~$H$; the linking form~$\lambda$ restricts to a linking form~$\lambda_\xi\colon H_\xi\times H_\xi\to\C(t)/\LC$.
  \item Decompose~$(H_\xi,\lambda_\xi)$ as an orthogonal sum of pairings over cyclic modules
      $$(H_\xi,\lambda_\xi)=(H_\xi^1,\lambda_\xi^1)\oplus\dots\oplus (H_\xi^{r_\xi},\lambda_\xi^{r_\xi}).$$
    The orthogonalisation procedure is algorithmic; see \cite[Lemma 4.3]{BorodzikFriedl2};
  \item 
    By Theorem~\ref{thm:decompose}, for each~$j$,
    $(H_\xi^j,\lambda_\xi^j)$
    is isometric to 
    a~$\ee(n_j,\xi,\eps_j,\C)$, where~$n_j$  
    is such that~$H_\xi^j=\LC/(\basicC_\xi(t))^{n_j}$, and where~$\epsilon_{j}=\pm 1$ is obtained as described in the next step.
  \item If~$n_j$ is even,~$(H_\xi^j,\lambda_\xi^j)$ does not contribute to the signature jump, and so we disregard it.
    If~$n_j$ is odd,~$\epsilon_j$ can be determined as follows.
    Pick a generator~$x$ of the cyclic module~$H_\xi^j$, and write
    $$\lambda_\xi^j(x,x)= \frac{r(t)}{\basicC_{\xi}(t)^{\frac{n+1}{2}}\basicC_{\bar{\xi}}(t^{-1})^{\frac{n-1}{2}}}$$ 
    for some polynomial~$r(t)$. This polynomial has the property that~$(t-\xi)r(t)$ takes real values on the unit circle. 
    We set~$\eps_j=+1$ if~$r(t)$ is~$\xi$-positive and~$\eps_j=-1$ if~$-r(t)$ is~$\xi$-positive. 
  \item To obtain~$\delta \sigma_{(H,\lambda)}(\xi)$, sum up the~$\epsilon_j$ for each~$(H_\xi^j,\lambda_\xi^j)$ obtained so far.
  \end{enumerate}
  The algorithm (except for the last step) is used for linking forms over~$\Z$ with odd determinant; see \cite[Sections 9 and 10]{BGKM}.
  The case of linking forms over~$\LC$ is analogous, because
  both~$\Z$ and~$\LC$ are principal ideal domains.
\end{algorithm}

In the introduction, given a linking form $(M,\lambda)$, we not only mentioned the signature jumps~$\delta\sigma_{(M,\lambda)}$, but also the signature function $\sigma_{(M,\lambda)} \colon S^1 \to \Z$.
While the use of the latter is conceptually enlightening, this paper  only makes use of the former.
\color{black}

\section{Twisted Blanchfield forms}
\label{sec:TwistedBlanchfield}

We briefly review the definition of twisted homology and some first facts about twisted Blanchfield forms; references include~\cite{MillerPowell,BCP_Top}.
While we do not recall the definition of these pairings, we describe an algorithm to calculate them (due to Powell~\cite{PowellThesis}) that may just as well be taken as a definition.  
Throughout this section, we assume that $\F$ is either $\R$ or $\C$.

\subsection{Twisted Blanchfield pairings}\label{sub:review_metabelian}
\label{sub:TwistedBlanchfield}

Let~$X$ be a space with universal cover~$p \colon \widetilde{X} \to X$.
We assume that $X$ has the homotopy type of a finite CW complex.
The left action of~$\pi_1(X)$ on~$\widetilde{X}$ endows the singular
chain groups of~$C_*(\widetilde{X})$ with the structure of left~$\Z[\pi_1(X)]$-modules.
Given a representation~$\beta\colon\pi_1(X)\to GL_d(\LF)$, use~$\LF_\beta^d$ to denote the~$(\LF,\Z[\pi_1(X)])$-bimodule whose right~$\Z[\pi_1(X)]$-module structure is given by right multiplication by~$\beta(\gamma)$ on row vectors.
The 
chain complexes 
\begin{align*}
  C_*(X;\LF_\beta^d)&:=\LF_\beta^d \otimes_{\Z[\pi_1(X)]} C_*(\widetilde{X}) \\
  C^*(X;\LF_\beta^d)&:=\Hom_{\operatorname{right}-\Z[\pi_1(X)]}( C_*(\widetilde{X})^\#,\LF_\beta^d)
\end{align*}
of left~$\LF$-modules will be called the (co)chain complexes of~$X$ twisted by~$\beta$.
The corresponding homology of left~$\LF$-modules~$H_*(X;\LF_\beta^d)$ and~$H^*(X;\LF_\beta^d)$ will be called the (co)homology of~$X$ twisted by~$\beta$.
The representation~$\beta$ is \emph{acyclic} if the chain complex~$\F(t) \otimes_{\LF} C_*(X;\LF^d_\beta)$ is acyclic and \emph{unitary} if~$\beta(\gamma)=\beta(\gamma^{-1})^{\#T}$.

We now assume that $N$ is a closed oriented $3$-manifold.
If~$\beta \colon \pi_1(N) \to GL_d(\LF)$ is a representation that is both acyclic and unitary, then the~$\LF$-module~$H_1(N;\LF^d_\beta)$ is endowed with a linking form
\begin{equation}
  \label{eq:Blanchfield}
  \Bl_{\beta}(N) \colon H_1(N;\LF^d_\beta) \times H_1(N;\LF^d_\beta) \to \F(t)/\LF,
\end{equation}
called a \emph{twisted Blanchfield pairing}. 
The definition of this pairing on $x,y \in H_1(N;\LF^d_\beta)$ is as~$\Bl_\beta(N)(x,y)=\Theta(y)(x)$ where $\Theta$ is the composition
\begin{align*}
  \Theta \colon H_1(N;\F[t^{\pm 1}]^d_\beta) \,
  & \xrightarrow{\operatorname{PD}^{-1}}\, H^2(N;\F[t^{\pm 1}]^d_\beta)  \\
  & \xrightarrow{\operatorname{BS}^{-1}}   H^1(N;(\F(t)/\F[t^{\pm 1}])^d_\beta) \\
  & \xrightarrow{\text{ev}}  \makeithash{\Hom_{\F[t^{\pm 1}]}(H_1(N;\F[t^{\pm 1}]^d_\beta),\F(t)/\F[t^{\pm 1}])}
\end{align*}
of the inverse of the Poincar\'e duality isomorphism,  the inverse of a Bocktein isomorphism and an evaluation map.
\color{black}
While we do not give further details on the definition of this pairing (referring instead to \cite{BCP_Top,MillerPowell,Powell}), the next subsection instead describes an algorithm to compute its value on any pair of elements of~$H_1(N;\LF^d_\beta)$.

\subsection{A review of Powell's algorithm}\label{sub:review_powell}
We briefly recall Powell's algorithm to compute the Blanchfield pairing~\cite{PowellThesis}.
In~\cite{Powell} Powell defines twisted Blanchfield pairings for arbitrary~$3$-dimensional symmetric chain complexes.
When~$N$ is a closed oriented~$3$-manifold and~$\beta \colon \pi_1(N) \to GL_d(\LF)$ is a unitary acyclic representation, his definition yields a non-singular linking form 
$$ \Bl^{\beta}(N) \colon H^2(N;\LF^d_\beta) \times H^2(N;\LF^d_\beta) \to \F(t)/\LF $$
on the twisted \emph{co}homology of~$N$ whose relation to~$\Bl_\beta(N)$ is described in Remark~\ref{rem:CohomologicalPairing} below.
We now focus on the algorithm described in~\cite{PowellThesis,MillerPowell} to compute~$\Bl^{\beta}(N)$, and take the result as our definition of~$ \Bl^{\beta}(N)$.

\begin{remark}
\label{rem:CellularHandle}
Fixing a handle decomposition of $N$ (which we will do below) merely gives rise to a CW structure on a space homotopy equivalent to $N$.
As a consequence,  the singular chain complex of $N$ is chain homotopy equivalent to the cellular chain complex of this auxiliary space
The same can be said for the chain complexes of the universal covers viewed as chain complexes over $\Z[\pi_1(N)]$; see e.g.~\cite[Lemma 4.2]{LuckDimensionTheory}.
Following~\cite{MillerPowell},  we nevertheless slightly abuse notation by writing $C_*(\widetilde{N})$ and $C^*(\widetilde{N},\partial \widetilde{N})$ instead of invoking the space to which $N$ is homotopy equivalent.
Here the point is that the algorithm in~\cite{PowellThesis,MillerPowell} only depends on the chain homotopy type of a given symmetric chain complex.
\end{remark}

Fix a handle decomposition of $N$ and choose a chain representative $[N] \in C_3(N)$ for the fundamental class of $N$.
Here, as indicated in Remark~\ref{rem:CellularHandle} we are technically working in the chain complex of a space homotopy equivalent to $N$.
Use~$\widetilde{N}$ to denote the universal cover of~$N$ and let~$(C^*(\widetilde{N}), \partial^*)$ be the resulting cochain complex of left~$\Z[\pi_1(N)]$-modules.
As explained in~\cite[Proposition 2.10]{MillerPowell} the choice of~$[N]$ together with the symmetric construction~\cite{RanickiAlgebraicTheory} leads to a $\Z[\pi_1(N)]$-chain homotopy equivalence 
\begin{equation}
\label{eq:SymmetricStructure}
 \Phi \colon C^{3-*}(\widetilde{N}) \to C_*(\widetilde{N})
 \end{equation}
which should be thought of as a chain level version of Poincaré duality.
We will not delve into the details of the symmetric construction,  but instead note that 
\color{black}
passing to twisted chain complexes,~$\partial^*$ and~$\Phi$ induce maps
\begin{align*}
  &\beta(\partial^*) \colon C^*(N;\LF^d_\beta) \to C^{*+1}(N;\LF^d_\beta), \\
  &\beta(\Phi) \colon C^{3-*}(N;\LF^d_\beta) \to C_{*}(N;\LF^d_\beta).
\end{align*}
For later use, we also recall that matrices are assumed to act on row vectors from the right and that the cohomological differentials are determined by the homological differentials via the formula~\(\beta(\partial^{i}) = (-1)^{i} \beta(\partial_{i})^{\#T}\).

The following definition is due to Powell~\cite{Powell} (see also~\cite{MillerPowell}).

\begin{definition}
  \label{def:CohomBlanchfield}
  Let~$N$ be a closed oriented~$3$-manifold and let~$\beta \colon \pi_1(N) \to GL_d(\LF)$ be a unitary acyclic representation.
Fix a handle decomposition of $N$ and a  chain representative $[N] \in C_3(N)$ of the fundamental class  and let  $\Phi \colon C^{3-*}(\widetilde{N}) \to C_*(\widetilde{N})$ be the chain homotopy equivalence resulting from the symmetric construction.
  The \emph{cohomological twisted Blanchfield} is defined as 
  \begin{align}
    \label{eq:ChainBlanchfieldFormula}
    \Bl^{\beta}(N) \colon H^2(N;\LF^d_\beta) \times H^2(N;\LF^d_\beta) &\to \F(t)/\LF \nonumber \\ 
    ([v],[w]) & \mapsto \frac{1}{s} \left(v \cdot \beta(\Phi) \cdot Z^{\# T}\right)^{\# T},
  \end{align}
  where~$v,w \in C^2(N;\LF^d_\beta)$ and~$Z \in C^1(N;\LF^d_\beta)$ satisfies~$Z\beta(\partial^2)=sw$ for some~$s \in~\LF \setminus \lbrace 0 \rbrace$.
The fact that this pairing does not depend on any of the choices involved was proved in~\cite{Powell}.
\end{definition}

The next remark summarises the relation between the cohomological pairing of Definition~\ref{def:CohomBlanchfield} and the homological pairing mentioned in Subsection~\ref{sub:review_metabelian}.
\begin{remark}
  \label{rem:CohomologicalPairing}
  The cohomological twisted Blanchfield pairing $\Bl^{\beta}(N)$ is isometric to the twisted Blanchfield pairing $\Bl_{\beta}(N)$ from Subsection~\ref{sub:review_metabelian}.
Indeed~\cite[Proposition 5.3]{MillerPowell} implies that for~$x,y \in H_1(N;\LF^d_\beta)$, the pairings are related by
  \[ \Bl^{\beta}(N)(x,y) = \Bl_{\beta}(N)(\Phi^{-*}(x),\Phi^{-*}(y)),\]
where $\Phi^{-*}=(\Phi^*)^{-1}$ denotes the inverse of the homomorphism on (co)homology induced by $\Phi$.
We emphasise that the reader should consider Definition~\ref{def:CohomBlanchfield} as a computational device: as far as the definitions go,  the approach outlined in Subsection~\ref{sub:TwistedBlanchfield} is more satisfactory.
\end{remark}

\begin{remark}
  \label{rem:HandleDiagram}
Suppose that $N=M_K$ is obtained by $0$-surgery on a knot~$K$.
In this case~\cite[Construction 3.2]{MillerPowell} shows how to associate to any reduced diagram $D$ of $K$ with $c \geq 3$ crossings a specific handle decomposition of $M_K$ with two $3$-handles $h_1^3$ and $h_2^3$.
The cohomological twisted Blanchfield form that Miller and Powell work with is then constructed by applying the symmetric construction to the fundamental cycle $[M_K]:=-h_1^3-h_2^3$~\cite[Remark 3.12]{MillerPowell}.
It is with respect to this handle decomposition and fundamental cycle that Miller-Powell describe an explicit algorithm to calculate the differentials and $\Phi$ in terms of a Wirtinger presentation for~$\pi_1(S^3 \setminus K)$ associated to the diagram $D$~\cite[Section 3 and Theorem 3.9]{MillerPowell}. 
It is in this sense that computing the twisted Blanchfield pairing is algorithmic.
\end{remark}
\color{black}

\subsection{Twisted signatures}
\label{sub:TwistedSignatures}
We briefly recall the definition of the twisted signature invariants from~\cite{BCP_Top}.
As we mentioned in the introduction, given a knot $K$ and a unitary acyclic representation $\beta \colon \pi_1(M_K) \to GL_d(\LC)$, the twisted signature jump of $(K,\beta)$ is the signature jump of the linking form $\Bl_\beta(K)$:
$$ \delta \sigma_{K,\beta}(\xi):=\delta_{(H_1(M_K;\LC^n_\beta), \Bl_\beta(K))}.$$
Properties of $\delta \sigma_{K,\beta}$ can be deduced from the properties of $\delta_{(M,\lambda)}$ listed in Theorem~\ref{thm:witt_equivalence}.
We record two additional remarks for later use:
\begin{remark}
\label{rem:LevineTristram}
 When $\beta \colon \pi_1(M_K) \to GL_1(\LC)$ is the map induced by abelianisation, the twisted Blanchfield form reduces to the classical Blanchfield form and the twisted signature jumps reduce to the jumps of the classical Levine-Tristram signature~\cite{BCP_Top}.
\end{remark}
\begin{remark}
\label{rem:AlgoTop}
Thanks to the algorithm described in Remark~\ref{rem:Algorithm}, the twisted signature jumps $\delta \sigma_{K,\beta}(\xi)$ can be calculated algorithmically from a knot diagram.
Indeed, the key point is that the fourth step of Algorithm~\ref{rem:Algorithm} requires that we be able to calculate elements of the form $\Bl_\beta(K)(x,x)$, where $x \in H_1(M_K;\LC^n_\beta)$.
This is possible thanks to the algorithm as described in Subsection~\ref{sub:review_powell}.
The whole process was summarised in Subection~\ref{sub:Algorithm} from the introduction and is illustrated in Section~\ref{sec:ExplicitComputations} in the case of~$(2,2k+1)$-torus knots. 
\end{remark}

\color{black}

\section{Metabelian Blanchfield forms}
\label{sec:MetabelianBlanchfield}

We now restrict to a specific class of twisted Blanchfield pairing associated to certain metabelian representations that arise in Casson-Gordon theory~\cite{FriedlEta,HeraldKirkLivingston,MillerPowell}.
For a knot~$K$, this representation is of the form
$$\alpha_K(n,\chi) \colon \pi_1(M_K) \to GL_n(\LC),$$
where~$\chi \colon H_1(\Sigma_n(K);\Z) \to \Z_m$ is a prime power order character on the first homology of~$\Sigma_n(K)$, the~$n$-fold cyclic branched cover of~$S^3$ branched along~$K$.
After reviewing the definition of~$\alpha_K(n,\chi)$, we list some properties of the resulting metabelian Blanchfield forms.

\subsection{Metabelian representations}
\label{sub:MetabelianRep}

The abelianisation homomorphism~$\phi\colon\pi_1(M_K)\xrightarrow{\cong}\Z=\langle t_K\rangle$ endows~$\Z[t_K^{\pm 1}]$ with a right~$\Z[\pi_1(M_K)]$-module structure.
This gives rise
to the twisted homology~$\Z[t_K^{\pm1}]$-module~$H_1(M_K;\Z[t_{K}^{\pm1}])$.
As in~\cite[Corollary 2.4]{FriedlPowell}, identify~$H_1(\Sigma_n(K);\Z)$ with the quotient module~$H_1(M_K;\Z[t_K^{\pm 1}])/(t_K^n-1)$. Consider the semidirect product~$\Z \ltimes H_1(\Sigma_n(K);\Z)$, where the group law is given by~$ (t_K^i,v)\cdot (t_K^j,w)=(t_K^{i+j},t_K^{-j}v+w)$. Next, let~$\gamma_K(n,\chi)$ be the homomorphism:
\begin{align}
  \label{eq:Matrix}
  \gamma_K(n,\chi) \colon \Z \ltimes H_1(\Sigma_n(K);\Z) &\to~\operatorname{GL}_n(\LC)  \nonumber \\
  (t_K^j,v) &\mapsto \begin{pmatrix}
    0& 1 & \cdots &0 \\
    \vdots & \vdots & \ddots & \vdots  \\
    0 & 0 & \cdots & 1 \\
    t & 0 & \cdots & 0
  \end{pmatrix}^j
                \begin{pmatrix}
                  \xi_{m}^{\chi(v)} & 0 & \cdots &0 \\
                  0 & \xi_{m}^{\chi(t_K \cdot v)} & \cdots &0 \\
                  \vdots & \vdots & \ddots & \vdots \\
                  0 & 0 & \cdots & \xi_{m}^{\chi(t_K^{n-1} \cdot v)}
                \end{pmatrix}.
\end{align}
Identify the module~$H_1(M_K;\Z[t_{K}^{\pm1}])$ with the quotient~$\pi_1(M_K)^{(1)}/\pi_1(M_K)^{(2)}$ and consider the following composition of canonical projections
\begin{equation}
  \label{eq:qKmap}
  q_K \colon \pi_{1}(M_{K})^{(1)}  \to  H_1(M_K;\Z[t_K^{\pm 1}]) \to H_1(\Sigma_n(K);\Z).
\end{equation}
Fix an element~$\mu_{K}$ in~$\pi_1(M_K)$ such that~$\phi_K(\mu_{K})=t_K$.
For every~$g \in \pi_1(M_K)$, we have~$\phi_K(\mu_K^{-\phi_K(g)}g)=1$ and so~$\mu_K^{-\phi_K(g)}g\in\pi_1(M_K)^{(1)}$.
As a consequence, we obtain the following map:
\begin{align*}
  \widetilde{\rho}_K \colon  \pi_1(M_K) &\to \Z \ltimes H_1(\Sigma_n(K);\Z) \\
  g &\mapsto (\phi_K(g),q_K(\mu_{K}^{-\phi_K(g)}g)). \nonumber
\end{align*}
The unitary representation~$\alpha_K(n,\chi)$ is obtained as the composition
\[
  \alpha_K(n,\chi) \colon \pi_1(M_K) \stackrel{\widetilde{\rho}_K}{\to}  \Z \ltimes H_1(\Sigma_n(K);\Z) \stackrel{\gamma_K(n,\chi)}{\longrightarrow} \operatorname{GL}_n(\LC).
\]

This representation is unitary and, if~$m$ is a prime power and~$\chi \colon H_1(\Sigma_n(K);\Z) \to \Z_m$ is nontrivial, then~$\alpha_K(n,\chi)$ is acyclic; see~\cite{FriedlPowellInjectivity} and~\cite[Lemma 6.6]{MillerPowell}. 
Furthermore, when the knot is clear from the context, we will often write $\alpha(n,\chi)$ instead of $\alpha_K(n,\chi)$.

\begin{definition}
  \label{def:MetabelianBlanchfield}
  For a prime power order representation~$\chi \colon H_1(\Sigma_n(K);\Z) \to \Z_m$, we refer to the twisted Blanchfield form~$\Bl_{\alpha(n,\chi)}(K)$ as a \emph{metabelian Blanchfield form}.
\end{definition}

The relevance of~$\Bl_{\alpha(n,\chi)}(K)$ to knot concordance stems from the following theorem of Miller-Powell~\cite[Theorem~6.9]{MillerPowell} which itself builds on work of Casson and Gordon~\cite{CassonGordon2}.
\begin{theorem}\label{thm:cg_obs}
  Suppose~$K$ is a slice knot. Then for any prime power~$n$ there exists a metaboliser~$P$ of the linking pairing~$H_1(\Sigma_n(K);\Z)\times H_1(\Sigma_n(K);\Z)\to\Q/\Z$ such that for any prime power~$q^a$ and any non-trivial character~$\chi\colon H_1(\Sigma_n(K);\Z)\to\Z_{q^a}$ vanishing on~$P$, the Blanchfield form~$\Bl_{\alpha(n,\chi_b)}(K)$ is metabolic for some~$b\ge a$, where~$\chi_b$ is the composition of~$\chi$ with the inclusion~$\Z_{q^a}\hookrightarrow \Z_{q^b}$.
\end{theorem}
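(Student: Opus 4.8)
This is a result of Miller--Powell, so the plan is to follow the Casson--Gordon blueprint, adapted to the twisted Blanchfield form, and to reduce metabolicity to the existence of a $4$-manifold bounding $M_K$ over which the relevant representation extends. First I would use the hypothesis directly: let $D\subset D^4$ be a slice disc with $\partial D=K$, let $N_D=D^4\setminus\nu(D)$ be its exterior, and recall that $\partial N_D=M_K$ and that $N_D$ is a $\Z$-homology $S^1\times D^3$. Passing to the $n$-fold branched cover of $D^4$ along $D$ gives a $4$-manifold $\Sigma_n(D)$ with $\partial\Sigma_n(D)=\Sigma_n(K)$; since $n$ is a prime power $p^r$, $\Sigma_n(D)$ is a $\Z_p$-homology ball, so in particular $H_1(\Sigma_n(D);\Z)$ is finite.

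Next I would produce the metaboliser independently of $\chi$. Setting $P:=\ker\big(H_1(\Sigma_n(K);\Z)\to H_1(\Sigma_n(D);\Z)\big)$, the classical half-lives-half-dies argument --- Poincar\'e--Lefschetz duality applied to the pair $(\Sigma_n(D),\Sigma_n(K))$ --- shows that $P$ is a metaboliser of the $\Q/\Z$-valued linking pairing on $H_1(\Sigma_n(K);\Z)$. This supplies the single submodule $P$ demanded by the statement, valid for all characters simultaneously.

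The heart of the argument is then the extension of the representation. Because $\chi$ vanishes on $P$, it factors through the image of $H_1(\Sigma_n(K);\Z)$ in the finite group $H_1(\Sigma_n(D);\Z)$; extending this character to all of $H_1(\Sigma_n(D);\Z)$ while keeping the target cyclic may force the order up from $q^a$ to $q^b$ with $b\ge a$, which is exactly where the index $b$ enters the statement. Using a compatible semidirect-product model $\Z\ltimes H_1(\Sigma_n(D);\Z)$ --- mirroring the construction~\eqref{eq:Matrix} of $\alpha(n,\chi)$ --- I would then extend $\alpha(n,\chi_b)\colon\pi_1(M_K)\to\GL_n(\LC)$ to a representation of $\pi_1(N_D)$.

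Finally I would invoke the general principle that a unitary acyclic representation extending over such a bounding $4$-manifold yields a metabolic twisted Blanchfield form: Poincar\'e--Lefschetz duality with the twisted coefficients $\LC^n_{\alpha(n,\chi_b)}$ identifies $\ker\big(H_1(M_K;\LC^n_{\alpha(n,\chi_b)})\to H_1(N_D;\LC^n)\big)$ as a metaboliser of $\Bl_{\alpha(n,\chi_b)}(K)$, which is the desired conclusion. The main obstacle is the third step: one must check that the metabelian representation, whose definition mixes the infinite cyclic action with the character on the branched cover, genuinely extends over $N_D$, and that the twisted homology of $N_D$ is small enough --- acyclicity over $\C(t)$ together with the appropriate half-dimensionality --- for the duality argument to produce the metabolising submodule rather than merely a bounding one.
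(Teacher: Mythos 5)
The paper does not actually prove this statement---it is imported verbatim as \cite[Theorem~6.9]{MillerPowell}, building on Casson--Gordon---and your outline (slice disc exterior, metaboliser $P=\ker\big(H_1(\Sigma_n(K);\Z)\to H_1(\Sigma_n(D);\Z)\big)$ via half-lives-half-dies on the prime-power branched cover, extension of the character after enlarging $\Z_{q^a}$ to $\Z_{q^b}$, extension of $\alpha(n,\chi_b)$ over the disc exterior via the semidirect-product model, and twisted Poincar\'e--Lefschetz duality producing the metaboliser of $\Bl_{\alpha(n,\chi_b)}(K)$) is precisely the Miller--Powell argument, including your correctly flagged technical points about acyclicity and the half-dimensionality needed for the kernel to be a full metaboliser. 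Your proposal is correct and takes essentially the same approach as the proof the paper relies on.
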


The take-away from Theorem~\ref{thm:cg_obs} is that if we can find enough representations~$\chi$ for which~$\Bl_{\alpha(n,\chi_b)}(K)$ is not metabolic, then we can show that~$K$ is not slice.
To obstruct $\Bl_{\alpha(n,\chi_b)}(K)$ from being metabolic, we will use signature jumps.


\subsection{The satellite formula for the metabelian Blanchfield form}\label{sub:cabling}

Let~$P,K \subset S^{3}\) be knots and let \(\eta\) be a simple closed curve in the complement of \(P\),
The \emph{satellite knot} \(P(K,\eta)\) with \emph{pattern}~\(P\), \emph{companion} \(K\) and \emph{infection curve} \(\eta\)
is the image of \(P\) under the diffeomorphism \((S^{3}\setminus \mathcal{N}(\eta)) \cup_{\partial} (S^{3}  \setminus \mathcal{N}(K)) \cong S^{3}\), where the gluing of the exteriors of \(\eta\) and \(K\) identifies the meridian of \(\eta\) with the zero-framed longitude of \(K\) and vice-versa.
The zero-surgery on the satellite knot \(P(K,\eta)\) can be obtained by the infection of~\(M_{P}\) by \(K\) along \(\eta \subset S^{3} \setminus P \subset M_{P}\):
\begin{equation}
  \label{eq:DecompoSurgerySatellite}
  M_{P(K,\eta)}=M_P \setminus \mathcal{N}(\eta) \cup_\partial S^3 \setminus \mathcal{N}(K).
\end{equation}
Let \(\mu_{\eta}\) denote a meridian of \(\eta\).
A representation \(\beta \colon \pi_{1}(M_{P(K,\eta)}) \to GL_{d}(\F[t^{\pm1}])\) induces representations on~$ \pi_{1}(M_{P} \setminus \mathcal{N}(\eta))~$ and~$\pi_{1}(S^{3} \setminus \mathcal{N}(K))$.
In turn, as explained in \cite[Section 3.3]{BCP_Top}, these representations extend to representations
\[\beta_{P} \colon \pi_{1}(M_{P}) \to GL_{d}(\F[t^{\pm1}]), \quad \beta_{K} \colon \pi_{1}(M_K) \to GL_{d}(\F[t^{\pm1}]).\]
provided \(\beta(\mu_{\eta}) = \id\) and \(\det(\id - \beta(\eta)) \neq 0\), in which case we say that~$\beta$ is $\eta$-regular.

Thus, given a character $\chi \colon H_1(\Sigma_n(P(K,\eta));\Z) \to \Z_m$, if the representation $\alpha(n,\chi)$ is $\eta$-regular, then it gives rise to representations $\alpha(n,\chi)_P$ on $\pi_1(M_P)$ and $\alpha(n,\chi)_K$ on $\pi_1(M_K)$.
The representation $\alpha(n,\chi)_P$ can be shown to to agree with~$\alpha(n,\chi_P)$, where $\chi_P$ is the character induced by~$\chi$ on~$H_1(\Sigma_n(P);\Z)$, as in~\cite[Section 4]{LitherlandCobordism}.
The main step in the proof of the metabelian satellite formula is to decompose $\alpha(n,\chi)_K$ into $h:=\operatorname{gcd}(n,w)$ representations.
To describe the outcome, for $i=1,\ldots,h$, one considers the character
\begin{align*}
 \chi_i \colon H_1(\Sigma_{n/h}(K);\Z) &\to \Z_m \\
v &\mapsto \chi(t_Q(\iota_n(v))),
\end{align*}
where $t_Q$ denotes the generator of the deck transformation group of the infinite cyclic cover of~$M_{P(K,\eta)}$ and~$\iota_n \colon H_1(\Sigma_{n/h}(K);\Z) \to H_1(\Sigma_{n}(P(K,\eta));\Z)$ is inclusion induced; we refer to~\cite{LitherlandCobordism} and~\cite{BCP_Top} for further details on this later map.
\color{black}
Additionally, use~$\mu_Q$ to denote a meridian of~$P(K,\eta)$ and define the map
$$ q_Q \colon \pi_1^{(1)}(M_{P(K,\eta)}) \to H_1(\Sigma_n(P(K,\eta));\Z)$$
as in~\eqref{eq:qKmap}.
The satellite formula for the metabelian Blanchfield form now reads as follows in the winding number~$w:=\ell k(\eta,P) \neq 0$ case; we refer to~\cite{BCP_Top} for the general statement.

\begin{theorem}\label{thm:metabelian-cabling-formula}
  Let \(K,P\) be two knots in \(S^{3}\), let \(\eta\) be an unknotted curve in the complement of~\(P\) with meridian~$\mu_\eta$, let \(w=\lk(\eta,P) \neq 0\), let \(n>1\) and set \(h = \gcd(n,w)\).
  For any character \(\chi \colon H_1(\Sigma_n(P(K,\eta));\Z) \to \Z_{m}\) of prime power order, the metabelian representation~\(\alpha(n,\chi)\) is \(\eta\)-regular.
  Moreover,
  \begin{enumerate}
  \item if \(w\) is divisible by \(n\), then there exists an isometry of linking forms
    \[\Bl_{\alpha(n,\chi)}(P(K,\eta)) \cong \Bl_{\alpha(n,\chi_P)}(P) \oplus \bigoplus_{i=1}^{n} \Bl(K)(\xi_{m}^{\chi_{i}(q_Q(\mu_Q^{-w}\eta))}t^{w/n});\]
  \item if \(w\) is not divisible by \(n\), then \(\Bl_{\alpha(n,\chi)}(P(K,\eta))\)  is Witt equivalent to
    \[\Bl_{\alpha(n,\chi_{P})} (P)\oplus \bigoplus_{i=1}^{h} \Bl_{\alpha(n/h,\chi_{i})}(K)(\xi_{m}^{\chi_{i}(q_Q(\mu_Q^{-w}\eta))}t^{w/h}).\]
  \end{enumerate}
\end{theorem}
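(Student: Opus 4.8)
The plan is to deduce both statements from the general infection formula for twisted Blanchfield pairings established in \cite{BCP_Top}, specialised to the representation $\alpha(n,\chi)$ and to the surgery decomposition $M_{P(K,\eta)} = (M_P \setminus \mathcal{N}(\eta)) \cup_{\partial} (S^3 \setminus \mathcal{N}(K))$ recorded in \eqref{eq:DecompoSurgerySatellite}. Given $\eta$-regularity, that general formula produces a decomposition of $\Bl_{\alpha(n,\chi)}(P(K,\eta))$ into a contribution of the $P$-side and a contribution of the $K$-side, so after verifying $\eta$-regularity the real work is to identify the two induced representations $\alpha(n,\chi)_P$ and $\alpha(n,\chi)_K$ explicitly, and to see why the outcome is an isometry precisely when $n \mid w$.

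First I would verify $\eta$-regularity, i.e. that $\alpha(n,\chi)(\mu_\eta) = \id$ and $\det(\id - \alpha(n,\chi)(\eta)) \neq 0$. Under the gluing of \eqref{eq:DecompoSurgerySatellite} the meridian $\mu_\eta$ is identified with the zero-framed longitude of $K$, hence is null-homologous; in particular $\phi$ and $q_Q$ both vanish on $\mu_\eta$, so the permutation and diagonal factors of $\gamma_K(n,\chi)$ in \eqref{eq:Matrix} are trivial on $\mu_\eta$ and $\alpha(n,\chi)(\mu_\eta) = \id$. For the nondegeneracy, I would use that $\chi$ has prime-power order and $w \neq 0$: the eigenvalues of $\alpha(n,\chi)(\eta)$ are products of roots of unity with a nonzero power of $t$, so none equals $1$.

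The heart of the argument is the identification of the restricted representations. On the $P$-side, $\alpha(n,\chi)_P$ agrees with $\alpha(n,\chi_P)$ by the argument of \cite[Section 4]{LitherlandCobordism}, as indicated before the statement, so this summand is exactly $\Bl_{\alpha(n,\chi_P)}(P)$. On the $K$-side, abelianisation sends the meridian of $K$ to $t^{w}$, so the permutation factor of $\gamma_K(n,\chi)$ restricts to powers of $J^{w}$, where $J$ is the cyclic matrix of \eqref{eq:Matrix} satisfying $J^{n} = t\,\id$. The permutation underlying $J$ is an $n$-cycle, so $J^{w}$ splits the standard basis into $h = \gcd(n,w)$ orbits each of size $n/h$, and traversing a single orbit $n/h$ times realises $(J^{n})^{w/h} = t^{w/h}\,\id$. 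Block-diagonalising along these orbits conjugates $\alpha(n,\chi)_K$ into a direct sum of $h$ representations, the $i$-th of which is $\alpha(n/h,\chi_i)$ with the variable $t$ replaced by $t^{w/h}$ and scaled by the root of unity $\xi_m^{\chi_i(q_Q(\mu_Q^{-w}\eta))}$. Here the characters $\chi_i$ and the scalar are obtained by tracking the diagonal character factor once around each orbit, which is precisely the bookkeeping encoded in $\chi_i(v) = \chi(t_Q(\iota_n(v)))$ and in the element $q_Q(\mu_Q^{-w}\eta)$.

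Finally I would separate the two cases, the difference being governed by a boundary contribution in the general formula that is metabolic in general but vanishes precisely when $n \mid w$. When $n \mid w$ one has $h = n$, each orbit is a single point, and each block is the one-dimensional abelian representation $t \mapsto \xi_m^{\chi_i(q_Q(\mu_Q^{-w}\eta))} t^{w/n}$, whose Blanchfield pairing is the classical form $\Bl(K)$ at that substitution; since the boundary contribution vanishes, the formula of \cite{BCP_Top} yields a genuine isometry, giving (1). When $n \nmid w$ one has $h < n$, each block is the $(n/h)$-dimensional metabelian representation $\alpha(n/h,\chi_i)$ at $t^{w/h}$, and the surviving metabolic boundary contribution forces passage to Witt equivalence, giving (2). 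The hard part will be the block-decomposition of $\alpha(n,\chi)_K$: making the orbit combinatorics of $J^{w}$ rigorous, checking that each block is genuinely conjugate to $\alpha(n/h,\chi_i)$ at $t^{w/h}$ rather than merely block-triangular, and pinning down the scalar twist $\xi_m^{\chi_i(q_Q(\mu_Q^{-w}\eta))}$, which demands careful tracking of how $\mu_Q$, $\eta$, and the inclusions $\iota_n$ interact.
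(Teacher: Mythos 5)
You should first be aware that this paper contains no proof of this theorem: it is imported from \cite{BCP_Top}, and the only in-paper material is the sketch preceding the statement (the Litherland-style identification $\alpha(n,\chi)_P = \alpha(n,\chi_P)$, and the ``main step'' of decomposing $\alpha(n,\chi)_K$ into $h=\gcd(n,w)$ representations via the characters $\chi_i$). Your outline reproduces exactly that sketch, so the route is the intended one; but judged as a proof, two of your steps have genuine gaps. The first is $\eta$-regularity. You assert that $q_Q(\mu_\eta)=0$ ``because $\mu_\eta$ is null-homologous.'' That is a non sequitur: null-homology only places $\mu_\eta$ in $\pi_1^{(1)}(M_{P(K,\eta)})$, i.e.\ in the \emph{domain} of $q_Q$ as defined in \eqref{eq:qKmap}; it says nothing about the image. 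Since $\mu_\eta$ is glued to the zero-framed longitude $\lambda_K$, the correct argument is geometric: $\lambda_K$ bounds a Seifert surface $F \subset S^3\setminus\mathcal{N}(K) \subset M_{P(K,\eta)}$, loops on $F$ can be pushed off $F$ so that $H_1(F)\to H_1(M_{P(K,\eta)})\to\Z$ vanishes, hence $F$ lifts to the $n$-fold branched cover and the lift of $\lambda_K$ bounds there, giving $q_Q(\lambda_K)=0$. Note that the tempting algebraic alternative --- $\lambda_K$ commutes with $\eta$, so $(t^w-1)\,q_Q(\lambda_K)=0$ --- fails precisely in case (1), since $t^n$ acts as the identity on $H_1(\Sigma_n(P(K,\eta));\Z)$ and so $t^w-1$ acts as zero when $n\mid w$. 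Separately, you invoke the prime-power hypothesis for the wrong purpose: nonvanishing of $\det(\id-\alpha(n,\chi)(\eta))$ needs only $w\neq 0$ (the determinant is then a nonconstant Laurent polynomial), whereas the prime-power condition is what guarantees \emph{acyclicity}, without which the twisted Blanchfield forms on both sides of the formula --- in particular for the induced representations $\alpha(n,\chi_P)$ and $\alpha(n/h,\chi_i)$ --- are not even defined; your appeal to the general infection formula uses this silently.

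The second gap is your explanation of the isometry-versus-Witt dichotomy. You posit ``a boundary contribution in the general formula that is metabolic in general but vanishes precisely when $n\mid w$,'' but you neither exhibit such a term nor prove anything about it; this mechanism is conjectured, not derived. The dichotomy is in fact decided at the step you explicitly defer as ``the hard part'': when $n\mid w$ the orbit blocks are one-dimensional abelian representations $t\mapsto \xi_m^{\chi_i(q_Q(\mu_Q^{-w}\eta))}t^{w/n}$, and identifying each block's contribution with the classical form $\Bl(K)$ at that substitution is an honest isometry; when $n\nmid w$ each block is an $(n/h)$-dimensional representation in the variable $t^{w/h}$, and comparing the Blanchfield form of $K$ twisted by such a block with $\Bl_{\alpha(n/h,\chi_i)}(K)$ after the substitution $t\mapsto \xi_m^{\chi_i(q_Q(\mu_Q^{-w}\eta))}t^{w/h}$ is a change-of-variables/transfer comparison that is available only up to Witt equivalence. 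Since you acknowledge deferring exactly the block-decomposition and substitution bookkeeping, your text is an accurate roadmap of the argument carried out in \cite{BCP_Top}, but it does not yet constitute a proof: the $\eta$-regularity justification is incorrect as stated, and the case distinction rests on an unsubstantiated guess.
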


Theorem~\ref{thm:metabelian-cabling-formula} takes a particularly simple form for connected sums.
In this case, we have~$w=1$ (so~$h=1$) as well as~$\eta=\mu_P$.
\begin{corollary}
  \label{cor:MetabelianConnectedSum}
  Let~$K, P$ be two knots. If~$\chi \colon H_1(\Sigma_n(K \# P);\Z) \to \Z_{m}$ is a character of prime power order, then  \(\Bl_{\alpha(n,\chi)}(K \# P)\) is Witt equivalent to~$\Bl_{\alpha(n,\chi_P)}(P) \oplus \Bl_{\alpha(n,\chi_K)}(K)$.
\end{corollary}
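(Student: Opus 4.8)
The plan is to realise the connected sum as a satellite knot and then to invoke the winding-number-one case of Theorem~\ref{thm:metabelian-cabling-formula}. Write $K \# P = P(K,\eta)$, where the pattern is $P$, the companion is $K$, and the infection curve is a meridian $\eta = \mu_P$ of $P$; infecting $P$ along its meridian by $K$ is the standard realisation of a connected sum as an infection. For this choice the winding number is $w = \lk(\eta,P) = \lk(\mu_P,P) = 1$, so that $h = \gcd(n,1) = 1$. When $n=1$ the branched cover is $S^3$, both sides vanish, and there is nothing to prove; hence I may assume $n > 1$, in which case $w=1$ is not divisible by $n$ and we land in case~(2) of Theorem~\ref{thm:metabelian-cabling-formula}, consistent with the Witt equivalence asserted in the corollary.

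With this input, the pattern contribution is $\Bl_{\alpha(n,\chi_P)}(P)$ exactly as stated in the theorem. Since $h=1$, the companion contribution is the single summand
\[ \Bl_{\alpha(n/h,\chi_1)}(K)\!\left(\xi_m^{\chi_1(q_Q(\mu_Q^{-w}\eta))}\,t^{w/h}\right) = \Bl_{\alpha(n,\chi_1)}(K)\!\left(\xi_m^{\chi_1(q_Q(\mu_Q^{-1}\eta))}\,t\right). \]
I would then simplify the twisting factor: the connected-sum infection identifies the satellite meridian with the pattern meridian, so $\mu_Q = \mu_P = \eta$ and therefore $\mu_Q^{-w}\eta = \mu_Q^{-1}\mu_Q = 1$ in $\pi_1(M_{K \# P})$. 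Consequently $q_Q(\mu_Q^{-1}\eta) = 0$ and the scalar $\xi_m^{0} = 1$, while $w/h = 1$ makes the variable substitution $t \mapsto t^{w/h}$ trivial as well. Thus the companion summand reduces to $\Bl_{\alpha(n,\chi_1)}(K)$.

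It remains to identify $\chi_1$ with the induced character $\chi_K$, and this is the step I expect to be the main obstacle. Here I would invoke the standard splitting $\Sigma_n(K \# P) \cong \Sigma_n(K) \# \Sigma_n(P)$ of the branched cover, which decomposes $H_1(\Sigma_n(K \# P);\Z) \cong H_1(\Sigma_n(K);\Z) \oplus H_1(\Sigma_n(P);\Z)$ and under which $\chi$ restricts to $\chi_K$ and $\chi_P$ on the two factors. With $h=1$ the inclusion-induced map $\iota_n$ is precisely the inclusion of the $H_1(\Sigma_n(K);\Z)$ summand, so the defining formula $\chi_1(v) = \chi(t_Q(\iota_n(v)))$ should collapse to $\chi_1 = \chi \circ \iota_n = \chi_K$; the delicate point is to verify that the deck transformation $t_Q$ contributes no nontrivial shift once $h=1$, which requires carefully matching the maps $\iota_n$ and $t_Q$ from Theorem~\ref{thm:metabelian-cabling-formula} against the geometric connected-sum splitting. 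Assembling the pattern term, the simplified companion term, and this identification then yields the claimed Witt equivalence between $\Bl_{\alpha(n,\chi)}(K \# P)$ and $\Bl_{\alpha(n,\chi_P)}(P) \oplus \Bl_{\alpha(n,\chi_K)}(K)$.
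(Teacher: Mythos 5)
Your proposal is correct and takes essentially the same route as the paper: Corollary~\ref{cor:MetabelianConnectedSum} is obtained there exactly by specialising Theorem~\ref{thm:metabelian-cabling-formula} to the connected-sum situation $\eta=\mu_P$, $w=1$, $h=1$ (so that the twisting scalar is $1$, the substitution $t\mapsto t^{w/h}$ is trivial, and $\chi_1$ is the induced character $\chi_K$), which is precisely your argument. The only slip is your side remark on $n=1$: there $\alpha(1,\chi)$ is the abelianisation, so neither side vanishes (the statement then just becomes additivity of the classical Blanchfield form), but this case lies outside Theorem~\ref{thm:metabelian-cabling-formula}, which assumes $n>1$, and is immaterial to the main argument.
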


\section{Identities of presentations and \(3\)-dimensional CW-complexes}
\label{sec:Identities}


Given a $3$-manifold $M$, the goal of the next two sections is to describe how Fox derivatives can be used to calculate the differentials in the handle chain complex $C_*^{\text{hnd}}(M;\Z[\pi_1(M)])$ as well as the symmetric structure it supports.
The main difficulty in these calculations lies in understanding the third differential and the symmetric structure.

This section focuses on the third differential.
In order to explain the procedure needed to calculate it, we will need some facts about identities of group presentations.
After recalling some terminology on crossed modules in Section~\ref{sub:CrossedModuless}, Section~\ref{sub:identities-of-presentation} focuses on identities and, given a 3-dimensional CW complex $Y$, Section~\ref{sub:Identities3dCW} builds on these notions to describe the (chain homotopy type of the) cellular chain complex of $\widetilde{Y}$.

With some effort, the main result of this section, namely Proposition~\ref{prop:alg-data-to-3-dim-cw-complexes} can be deduced from results in Part 1 of the monograph~\cite[Part~1]{Brown-Higgins-Sivera}, or by combining the work of Whitehead~\cite{whiteheadElementsHomotopyTheory1978} and Trotter~\cite{TrotterSystem}.
Since these results do not seem to be frequently used by the low dimensional topology community, we include both recollections and detailed proofs.

\begin{remark}
\label{rem:MillerPowellDetails}
In~\cite[Section 3.1]{MillerPowell}, Miller and Powell explain how a Wirtinger presentation of~$\pi_1(X_K)$ gives rise to a handle decomposition of $M_K$,  as well as how to calculate the $\Z[\pi_1(M_K)]$-handle chain complex of $M_K$ and its symmetric structure.
The work we carry out in this section and the next stems both from the fact that we plan to use a handle decomposition that does not arise from a Wirtinger presentation and because we felt the need to supplement more details to the paragraph in~\cite[proof Theorem~3.9]{MillerPowell} that begins with ``Note that there is a correspondence between 3-cells and identities of a presentation"; compare that paragraph with (the proofs of) Proposition~\ref{prop:alg-data-to-3-dim-cw-complexes} and Corollary~\ref{corollary:3-dim-cw-complex-to-alg-data}.
\end{remark}

\subsection{Crossed modules}
\label{sub:CrossedModuless}

In this section we give a brief summary of the theory of crossed modules.
For more details refer to the Part~1 of~\cite{Brown-Higgins-Sivera}.
The reason for considering crossed modules is that for $2$-complexes, they provide a convenient formalism to relate identities of presentations (which we need to calculate the third differential and the symmetric structure of our chain complex) and~$\pi_2$; see Proposition~\ref{prop:Peifferpi2}.

\begin{definition}
\label{def:CrossedModule}
~
\begin{itemize}
\item Given a group $\pi$,  a \emph{crossed $\pi$-module} is a pair $(G,\partial)$ consisting of a group $G$ upon which $\pi$ acts from the left 
and a group homomorphism $\partial  \colon G \to \pi$ such that 
$$\partial(\gamma \cdot g)=\gamma \partial(g)\gamma^{-1}$$
for every $\gamma \in \pi$ and every $g \in G$.
We often refer to $(G,\partial)$ as a \emph{crossed $\pi$-module}.

\item Suppose we are given a group~$\pi$, a set~\(S\), and a map \(m \colon S \to \pi\).
Let \(H\) be the free group on the set \(S \times \pi\) and extend the map~\(m\) to a map
\[\partial_{m} \colon H \to \pi, \quad \partial_{m}(x,\gamma) = \gamma m(x) \gamma^{-1}.\]
For two elements \(a,b \in H\), we define their \emph{Peiffer commutator}
\[[[a,b]]_{P} = a b a^{-1} (\partial_{m}(a) \cdot b)^{-1}.\]
We denote by~\([[H,H]]_{m}\) the subgroup of~\(H\) generated by Peiffer commutators.

\item Given a group~$\pi$, a set~\(S\), and a map \(m \colon S \to \pi\), we construct the \emph{free crossed \(\pi\)-module} \((FC_{\pi}(S,m),\partial)\) generated by the pairs~$(S,m)$.
As a group, \(FC_{\pi}(S,m)\) is the quotient
\[FC_{\pi}(S,m) = H / [[H,H]]_{m},\]
where, as in the previous item, \(H\) is the free group on the set~\(S \times \pi\).
In other words, for any~\(x,y \in S\) and~\(\gamma_{1},\gamma_{2} \in \pi\), the following relation holds in~\(FC_{\pi}(S,m)\):
\[(y,\gamma_1) (x,\gamma_2) (y,\gamma_1)^{-1}\sim (x,\gamma_{1} m(y) \gamma_{1}^{-1} \gamma_{2}).\]
The action of~\(\pi\) on \(FC_{\pi}(S,m)\) comes from the natural left action of~\(\pi\) on itself, i.e.
\(\gamma_{1} \cdot (x,\gamma_{2}) = (x,\gamma_{1} \gamma_{2})\).
Furthermore, since the map \(\partial_{m} \colon H \to \pi\) vanishes on~\([[H,H]]_{m}\), it descends to a map defined on~\(FC_{\pi}(S,m)\), and we define \(\partial := \partial_{m}\).
\end{itemize}
\end{definition}

Let us note the following facts whose proofs are left to the reader.

\begin{lemma}\label{lemma:crossed-modules-elementary-facts}
    Let \((G,\partial)\) be a crossed \(\pi\)-module.
    \begin{enumerate}
        \item The image \(\partial(G) \subset \pi\) is a normal subgroup.
        \item The subgroup~\(\ker(\partial) \subset G\) is abelian.
        \item The action of \(\pi\) on \(G\) descends to an action of \(\pi/\partial(G)\) on~\(\ker(\partial)\).
    \end{enumerate}
\end{lemma}

\begin{example}\label{example:free-crossed-module}
Let \(\mathcal{P} = \langle \mathbf{x} \mid  \mathbf{r} \rangle\) be a finite presentation of a group \(\pi\), let~\(X\) denote the \(2\)-dimensional presentation CW-complex of~\(\mathcal{P}\) and denote by \(X^{1}\) the \(1\)-skeleton of~\(X\).
  It is known (see e.g.~\cite{Whitehead}) that the pair~\( (\pi_{2}(X,X^{1}),\partial)\) is a crossed \(\pi_{1}(X^{1})\)-module, where 
  \[\partial \colon \pi_{2}(X,X^{1}) \to \pi_{1}(X^{1}),\]
is the connecting homomorphism from the long exact sequence of the pair \((X,X^{1})\).

Whitehead~\cite[Section~16]{Whitehead} proved that~$(\pi_{2}(X,X^{1}),\partial)$ is isomorphic to the free crossed~\(\pi_{1}(X^{1})\)-module generated by the set \(R= \{f^{2}_{r} \colon r \in
\mathbf{r}\}\) of characteristic maps \(f^{2}_{r} \colon (D^{2},\partial D^{2}) \to (X,X^{1})\) of the \(2\)-cells  \(\{e^{2}_{r} \colon r \in \mathbf{r}\}\) of \(X\).
Furthermore, the map \(  m  \colon R \to \pi_{1}(X^{1})\) is given by the formula~\(m(f^{2}_{r}) = r\).
\end{example}

\subsection{Identities of presentations}
\label{sub:identities-of-presentation}
This section is concerned with identities of group presentations.
As will become apparent in the next section, this is the algebra that underlies the calculation of the third differential in the chain complex of the universal cover of a $3$-dimensional CW complex.
\medbreak

  Let~\(\mathcal{P} = \langle \mathbf{x} \mid  \mathbf{r} \rangle\)  be a presentation of a group~$G$, let \(F\) be the free group generated by~\(\mathbf{x}\), and let \(P=\langle \rho_{r} \colon r \in \mathbf{r} \rangle\)
  be the free group generated by symbols~\(\rho_{r}\), for~\(r \in \mathbf{r}\).
  Moreover, following Trotter~\cite[Section 2.1]{TrotterSystem}, consider the homomorphism 
  $$\psi \colon F \ast P \to F$$
  defined on generators by~$\psi(x)=x$, for~\(x \in \mathbf{x}\), and $\psi(\rho_{r}) = r$ for~\(r \in \mathbf{r}\).

\begin{definition}\label{definition:identity-of-presentation}
    Let~\(\mathcal{P} = \langle \mathbf{x} \mid  \mathbf{r} \rangle\)  be a presentation of a group~$G$.
    Denote by~\(N(\mathcal{P})\) the normal subgroup of~\(F \ast P\) generated by~\(P\):
    $$ N(\mathcal{P}):=\langle \langle P \rangle \rangle \trianglelefteq F \ast P.$$
   An~\emph{identity of the presentation $\mathcal{P}$} is an element of \(\ker(\psi) \cap N(\mathcal{P}) \).
   We write~\(I(\mathcal{P})\) for the group of identities of~\(\mathcal{P}\) :
   $$ I(\mathcal{P}):=\ker(\psi) \cap N(\mathcal{P}).$$
   More explicitly,  an identity is an element of~\(\ker(\psi)\), which can be written as a product of words of the form \(w \rho^{\epsilon}_{r} w^{-1}\), where~$w$ lies in~$F$, \(\epsilon =
   \pm 1\), and \(r \in \mathbf{r}\).
\end{definition}

\begin{construction}
\label{cons:ChainComplex}
Given a group $G$, following Trotter~\cite[page 473]{TrotterSystem},  we outline the definition of  a 3-dimensional chain complex of free left $\Z[G]$-modules
$$C_\bullet(\mathbf{x},\mathbf{r},\mathbf{s})=\left( C_3 \xrightarrow{\partial_3} C_2 \xrightarrow{\partial_2}   C_1  \xrightarrow{\partial_1}  C_0  \right)$$
associated to a presentation $\mathcal{P}=\langle \mathbf{x} \mid \mathbf{r}\rangle$ of $G$ and a set $\mathbf{s}$ of identities of $\mathcal{P}$.
The chain complex~\(C_\bullet(\mathbf{x},\mathbf{r},\mathbf{s})\) satisfies \(H_{0}(C_\bullet(\mathbf{x},\mathbf{r},\mathbf{s}))
\cong \Z\) (where $\Z$ is endowed the $\Z[G]$-module structure induced by augmentation),  \(H_{1}(C_\bullet(\mathbf{x},\mathbf{r},\mathbf{s})) = 0\).

The chain module $C_0$ is free of rank one on an element $v$, $C_1$ is free on elements \(\{a_{x} \colon x \in \mathbf{x}\}\), $C_2$ is free on elements~\(\{b_{r} \colon r \in \mathbf{r}\}\), and $C_3$ is free
on the set~\(\{c_{s} \colon s \in \mathbf{s}\}\).
The differentials are defined in terms of Fox derivatives
\begin{align*}
  \partial_{1}(a_{x}) &= (x-1)v, \quad \text{\ for\ } x \in \mathbf{x}, \\
  \partial_{2}(b_{r}) &= \sum_{x \in \mathbf{x}} \frac{\partial r}{\partial x} a_{x}, \quad \text{\ for\ } r \in \mathbf{r}, \\
  \partial_{3}(c_{s}) &= \sum_{r \in \mathbf{r}} \frac{\partial s}{\partial \rho_{r}} b_{r}, \quad \text{\ for\ } s \in \mathbf{s},
\end{align*}
where the Fox~derivatives~\(\frac{\partial s}{\partial \rho_{r}}\) are computed in~\(F \ast P\).
The fact that  \(H_{0}(C_\bullet(\mathbf{x},\mathbf{r},\mathbf{s}))
\cong \Z\) and~\(H_{1}(C_\bullet(\mathbf{x},\mathbf{r},\mathbf{s})) = 0\) follows because Fox derivatives calculate the differentials in the cellular chain complex of the universal cover.
\end{construction}

\begin{definition}
    Let~\(\mathcal{P} = \langle \mathbf{x} \mid  \mathbf{r} \rangle\)  be a presentation of a group~$G$.
    We say that a set of identities~$\mathbf{s}$ is \emph{complete} if~\(H_{2}(C_\bullet(\mathbf{x},\mathbf{r},\mathbf{s})) = 0\).
\end{definition}

When~$\mathbf{s}$ is a complete set of identites for a presentation $\mathcal{P}$, we can think of~\(C_\bullet(\mathbf{x},\mathbf{r},\mathbf{s})\) as a truncation of a free resolution of the trivial \(\Z[G]\)-module \(\Z\).



\subsection{Identities and 3-dimensional CW complexes}
\label{sub:Identities3dCW}
We use identities to describe the chain complex of the universal cover of a $3$-dimensional CW complex.
\medbreak

Let \(\mathcal{P} = \langle \mathbf{x} \mid \mathbf{r} \rangle\) be a presentation of a group~\(G\).
As in the previous section, we write~\(F\) for the free group on the set~\(\mathbf{x}\), \(P\) for the free group generated by symbols~\(\rho_{r}\) with~\(r \in \mathbf{r}\), and
$$ \psi \colon F \ast P \to F $$
for the homomorphism  defined on generators by~$\psi(x)=x$, for~\(x \in \mathbf{x}\), and $\psi(\rho_{r}) = r$ for~\(r \in \mathbf{r}\).
We also recall that
$$ N(\mathcal{P}):=\langle \langle P \rangle \rangle \trianglelefteq F \ast P.$$
Observe that \(F\) acts on \(N(\mathcal{P})\) by conjugation.

Let~\(X\) denote a \(2\)-dimensional presentation CW-complex of~\(\mathcal{P}\).
In particular,~\(X\) is built of \(1\)-cells \(e^{1}_{x}\), for \(x \in \mathbf{x}\), and \(2\)-cells \(e^{2}_{r}\), for \(r \in \mathbf{r}\).

\begin{construction}
There is a group homomorphism~\(\kappa_{N} \colon N(\mathcal{P}) \to \pi_{2}(X,X^{1})\) given by the formula
\[\kappa_{N}(w \rho_{r}^{\epsilon} w^{-1}) = (w \cdot f^{2}_{r})^{\epsilon},\]
where \(f^{2}_{r} \colon (D^{2},\partial D^{2}) \to (X,X^{1})\) is the characteristic maps of the \(2\)-cell~\(e^{2}_{r}\).

This uniquely defines $\kappa_{N}$ because, as we now assert,~\(N(\mathcal{P})\) is freely generated by words of the form~\(w \rho_{r} w^{-1}\), for \(r \in \mathbf{r}\) and~\(w \in F\).
Consider the canonical projection \(p \colon F \ast P \to F\) with \(\ker(p) = N(\mathcal{P})\) and \((F \ast P) / N(\mathcal{P}) \cong F\).
      Therefore, \(N(\mathcal{P}) = \pi_{1}(Y)\), where \(Y \to K(F \ast P,1)\) is the covering associated to the projection~\(p\).
      The space~\(Y\) can be constructed as a pullback of the universal cover~\(\widetilde{K(F,1)} \to K(F,1)\) along the map \(K(F \ast P,1) \to K(F,1)\) induced by the projection~\(p\). 
Both~\(K(F,1)\) and~\(K(F \ast P,1)\) are bouquets of circles and we write $(W_f)_{f \in F}$ for the lifts of the single \(0\)-cell of~\(K(F,1)\). 
The aforementioned pullback of~\(\widetilde{K(F,1)} \to K(F,1)\) can be build by attaching a copy of~\(K(P,1)\) at every \(0\)-cell of $\widetilde{K(F,1)}$, i.e.\ at every $w_f$.
It follows that~\(Y\) is homotopy equivalent to a bouquet of~\(K(P_{w},1)\), where \(P_{w} = w P w^{-1} \subset F \ast P\) and~\(w \in F\), i.e., \(Y \simeq \bigvee_{w \in F} K(P_{w},1)\).
      Hence,  $N(\mathcal{P})$ is freely generated by words of the form~\(w \rho_{r} w^{-1}\), asserted and~\(\kappa_N\) is defined.
\end{construction}

The following proposition, though not explicitly stated, is a consequence of the results of~Section~16 of~\cite{Whitehead} and of~Section~2.3 in~\cite{TrotterSystem}.
  \begin{proposition}
      \label{prop:Peifferpi2}
      Let \(X\) be a \(2\)-dimensional CW-complex realising a presentation~\(\mathcal{P}\) of a group~\(G\).
The map~$\kappa_{N} \colon  N(\mathcal{P}) \to \pi_{2}(X,X^{1})$ is surjective and descends to an isomorphism of crossed \(F\)-modules
      \[\kappa_{N} \colon \left(N(\mathcal{P}) / [[N(\mathcal{P}),N(\mathcal{P})]]_{\psi},\psi\right) \xrightarrow{\cong} \left( \pi_{2}(X,X^{1}), \partial \right).\]
      The restriction of \(\kappa_{N}\) to~\(I(\mathcal{P})\) induces an isomorphism of left \(\Z[G]\)-modules
      $$\kappa_{I} \colon  I(\mathcal{P})/[[N(\mathcal{P}),N(\mathcal{P})]]_{\psi} \to \pi_2(X),$$
      where, by abuse of notation, we denote by~\(\psi\) the restriction of the map~\(\psi \colon F \ast P \to F\) to~\(N(\mathcal{P})\).
  \end{proposition}
  \begin{proof}
      First, we argue that~\(\kappa_{N}\) is surjective and that~\(\ker( \kappa_{N}) = [[N(\mathcal{P}),N(\mathcal{P})]]_{\psi}\).
      As in Definition~\ref{def:CrossedModule}, we denote by~\(H\) the free group generated by the set~\(S \times F\), where \(S = \{e^{2}_{r} \colon r \in \mathbf{r}\}\) is the collection of \(2\)-cells of~\(X\).
      It follows that the map 
      \[\theta \colon N(\mathcal{P}) \owns w \rho_{r} w^{-1} \mapsto (f^{2}_{r},w) \in H\]
      defines a group isomorphism.
      Indeed, this follows from the fact that \(N(\mathcal{P})\) is freely generated by words of the form~\(w \rho_{r} w^{-1}\), where \(w \in F\).
  
This group homomorphism fits into the commutative diagram
      \begin{equation}
          \label{eq:commutative-diagram-1}
          \begin{tikzcd}
              N(\mathcal{P}) \arrow[rr,"{\theta,\cong}"] \arrow[dr,"\kappa_{N}"] & & H \arrow[dl,"q"] \\
              & \pi_{2}(X,X^{1}), & 
          \end{tikzcd}
      \end{equation}  
where \(q \colon H \to \pi_2(X,X^{1})\) is given on generators by the formula \(q(f_{r},w) = w \cdot f_{r}\).      
Set $F:=\pi_1(X^1)$.
As noted in Example~\ref{example:free-crossed-module}, \(\pi_2(X,X^{1})\) is a free \(F\)-crossed module, meaning that the homomorphism~$q$ is surjective with kernel \(\ker(q) = [[H,H]]_{\partial_{H}}\), where \(\partial_{H}\) denotes the map
      \[\partial_{H} \colon S \times F \to F=\pi_{1}(X^{1}), \quad \partial_{H}(f^{2}_{r},w) = w r w^{-1}.\]
      Since $\theta$ and $q$ are both surjective,  the commutativity of~\eqref{eq:commutative-diagram-1} implies that so is~\(\kappa_{N}\).
      
%

We now show that~\(\ker( \kappa_{N}) = [[N(\mathcal{P}),N(\mathcal{P})]]_{\psi}\).
Since $\ker(q)=[[H,H]]_{\partial_{H}}$,  the commutativity of~\eqref{eq:commutative-diagram-1} implies that
      \[\ker(\kappa_{N}) = \theta^{-1}(\ker(q)) = \theta^{-1}([[H,H]]_{\partial_{H}}).\]
      In order to understand the subgroup~\(\theta^{-1}([[H,H]]_{\partial_{H}})\), consider the diagram
      \begin{center}
          \begin{equation}
              \label{eq:commutative-diagram-2}
              \begin{tikzcd}
                  N(\mathcal{P}) \arrow[r,"\theta"] \arrow[d,"\psi"] & H \arrow[d,"\partial_{H}"] \\
                  F \arrow[r,"="] & \pi_{1}(X^{1}).
              \end{tikzcd}
          \end{equation}
      \end{center}
whose commutativity implies the required equality:     
      \[\ker(\kappa_{N}) = \theta^{-1}(\ker(q)) = \theta^{-1}([[H,H]]_{\partial_{H}}) = [[N(\mathcal{P}),N(\mathcal{P})]]_{\psi}.\]
      In particular, \(\kappa_{N}\) induces an isomorphism
      \[N(\mathcal{P}) / [[N(\mathcal{P}),N(\mathcal{P})]]_{\psi} \cong \pi_{2}(X,X^{1}).\]
      It remains to show that the restriction of~\(\kappa_{N}\) to~\(I(\mathcal{P})\) gives rise to the claimed isomorphism.     
      Consider the following diagram with exact rows:
      \begin{center}
      \begin{equation}
      \label{eq:CommutativeForExplicitKappa}
          \begin{tikzcd}
              1 \arrow[r] & I(\mathcal{P}) \arrow[r] \arrow[d,"\kappa_{I}"] & N(\mathcal{P}) \arrow[r,"\psi"] \arrow[d,"\kappa_{N}"] & F \arrow[d,"\id"] \\
              1 \arrow[r] & \pi_2(X) \arrow[r] & \pi_{2}(X,X^{1}) \arrow[r, "\partial"] & \pi_{1}(X^{1}).
          \end{tikzcd}
          \end{equation}
      \end{center}
      The right-hand side square of~\eqref{eq:CommutativeForExplicitKappa} is commutative because for any~\(w \rho_{r}^{\epsilon} w^{-1} \in N(\mathcal{P})\), we have
      \[(\partial \circ \kappa_{N})(w \rho_{r}^{\epsilon} w^{-1}) = \partial((f^{2}_{r},w)^{\epsilon}) = w r^{\epsilon} w^{-1} = \psi(w \rho_{r}^{\epsilon} w^{-1}).\]
The commutativity of this diagram and the exactness of its rows gives the required isomorphism:
      \[\pi_{2}(X) = \ker(\partial) \cong \ker(\psi) / \ker (\kappa_{N}) = I(\mathcal{P}) /
          [[N(\mathcal{P}),N(\mathcal{P})]]_{\psi}.\]
      This concludes the proof of the proposition.
  \end{proof}

Roughly speaking, the next proposition describes how a set of identities $\mathbf{s}$ for a group presentation $\mathcal{P}$ determines a $3$-dimensional CW-complex that realises the data of $\mathcal{P}$ and $\mathbf{s}$.

  \begin{proposition}\label{prop:alg-data-to-3-dim-cw-complexes}
 Let \(\mathcal{P} = \langle \mathbf{x} \mid \mathbf{r} \rangle\) be a presentation of a group~\(G\) and let \(\mathbf{s}\) be a collection of identities of~\(\mathcal{P}\).
  This data determines a \(3\)-dimensional CW-complex \(Z := Z(\mathbf{x},\mathbf{r},\mathbf{s})\) such that:
      \begin{enumerate}
      \item \(Z\) is connected and~\(\pi_{1}(Z) \cong G\),
      \item \(Z^{2}\) realizes the presentation~\(\mathcal{P}\),
      \item the \(3\)-cells of~\(Z\) are in bijection with~\(\mathbf{s}\),
      \item  \(\pi_{2}(Z) \cong \pi_{2}(Z^{2}) / S\), where~\(S\) denotes the left \(\Z[G]\)-submodule of \(\pi_{2}(Z^{2}) \cong I(\mathcal{P}) / [[N(\mathcal{P}),N(\mathcal{P})]]_{\psi}\) generated by elements of~\(\mathbf{s}\).
      \item there is an identification \(C^{cell}_{\bullet}(\widetilde{Z}) \cong C_\bullet(\mathbf{x},\mathbf{r},\mathbf{s})\) of $\Z[G]$-chain isomorphism where~\(C_{\bullet}(\mathbf{x},\mathbf{r},\mathbf{s})\) is the chain complex from Construction~\ref{cons:ChainComplex}.
      \end{enumerate}
      In particular, if \(\mathbf{s}\) is a complete set of identities, then \(Z\) is the \(3\)-skeleton of a model for the classifying space~\(BG\).
  \end{proposition}
  \begin{proof}
  Let \(X\) be a presentation \(2\)-complex of~\(\mathcal{P}\).
      By Proposition~\ref{prop:Peifferpi2}, the set~\(\mathbf{s}\) determines a collection of elements \(\{[g_{s}]\}_{s \in \mathbf{s}}\) of \(\pi_{2}(X)\).
      Construct~\(Z\) by adjoining \(3\)-cells \(\{e^{3}_{s}\}_{s \in \mathbf{s}}\), where~\(e^{3}_{s}\) is attached using~\(g_{s} \in \pi_{2}(X)\).
      For later use, we denote the characteristic map of the \(3\)-cell~\(e^{3}_{s}\) by
$$ f^{3}_{s} \colon  (D^{3},\partial D^{3}) \to (Z,Z^{2}).$$ 
      The first three points of the proposition now follow immediately from the construction of~\(Z\).
      We prove the fourth point.
      Since~\(Z\) is a \(3\)-complex, the relative homotopy group \(\pi_{3}(Z,Z^{2})\) is a free \(\Z[G]\)-module generated by homotopy classes of characteristic maps \(\{f^{3}_{s}\}_{s \in \mathbf{s}}\)~\cite[Chapter
      V.1, Theorem~1.1]{whiteheadElementsHomotopyTheory1978}.
      Using the exact sequence
      \[\pi_{3}(Z,Z^{2}) \xrightarrow{\partial} \pi_{2}(Z^{2}) \to \pi_{2}(Z) \to 0\]
      we conclude that~\(\pi_{2}(Z) \cong \pi_{2}(Z^{2}) / \operatorname{im} \partial\).
      Observe that the boundary map in the exact sequence maps the homotopy class of the characteristic map~\([f_{s}]\) to the homotopy class of the corresponding attaching map~\([g_{s}]\), for \(s \in \mathbf{s}\).
      Observe that for any~\(s \in \mathbf{s}\), \(\kappa_{I}([g_{s}]) = s\), where \(\kappa_{I}\) is the isomorphism from Proposition~\ref{prop:Peifferpi2}.
      Therefore, \(\kappa_{I}^{-1}\) maps~\(\operatorname{im} \partial\) isomorphically onto the~\(\Z[G]\)-submodule of~\(I(\mathcal{P}) / [[N(\mathcal{P}),N(\mathcal{P})]]_{\psi}\) generated by identities \(s \in \mathbf{s}\), as desired.
This concludes the proof of the fourth point.


We prove the fifth and the last point.
For each $k$, the chain groups underlying the chain complex~$C^{\text{cell}}_\bullet(\widetilde{Z})$ are~\(C^{\text{cell}}_{k}(\widetilde{Z}) = H_{k}(\widetilde{Z}^{k},\widetilde{Z}^{k-1})\) and the boundary maps are given by the composition
      \[\partial^{\text{cell}}_{k+1} \colon  C_{k+1}^{\text{cell}}(\widetilde{Z}) = H_{k+1}(\widetilde{Z}^{k+1},\widetilde{Z}^{k}) \xrightarrow{\partial} H_{k}(\widetilde{Z}^{k}) \xrightarrow{j}
          H_{k}(\widetilde{Z}^{k},\widetilde{Z}^{k-1}) = C_{k}^{\text{cell}}(\widetilde{Z}),\]
      where the maps~\(\partial\) and~\(j\) come from the exact sequence of the pairs~\((\widetilde{Z}^{k+1},\widetilde{Z}^{k})\) and~\((\widetilde{Z}^{k},\widetilde{Z}^{k-1})\), respectively.
  The expressions for~\(\partial_{1}^{\text{cell}}\) and~\(\partial_{2}^{\text{cell}}\) are well-known (see e.g.~\cite[page~473]{TrotterSystem}) and agree with the formulas given in Construction~\ref{cons:ChainComplex}.
  Therefore, we focus only on identifying the differential~\(\partial_{3}^{\text{cell}}\). 
      
  Consider the following commutative diagram
      \begin{center}
          \begin{tikzcd}
              & I(\mathcal{P}) \arrow[r,"j"] \arrow[d,"\kappa_{I}"] & N(\mathcal{P}) \arrow[d,"\kappa_{N}"] \\
              \pi_{3}(Z,Z^{2}) \arrow[r,"\partial"] & \pi_{2}(Z^{2}) \arrow[r,"j"] & \pi_{2}(Z^{2},Z^{1}) \\
              \pi_{3}(\widetilde{Z},\widetilde{Z}^{2}) \arrow[r,"\partial"] \arrow[d,"h_{3}","\cong"'] \arrow[u,"q_{\ast}"',"\cong"] & \pi_{2}(\widetilde{Z}^{2}) \arrow[r,"j"] \arrow[d,"h_{2}","\cong"'] \arrow[u,"q_{\ast}"',"\cong"] & \pi_{2}(\widetilde{Z}^{2},\widetilde{Z}^{1})
              \arrow[d,"h_{2,\text{rel}}"] \arrow[u,"q_{\ast}"] \\
              H_{3}(\widetilde{Z},\widetilde{Z}^{2}) \arrow[r,"\partial"] & H_{2}(\widetilde{Z}^{2}) \arrow[r,"j"] & H_{2}(\widetilde{Z}^{2},\widetilde{Z}^{1}),
          \end{tikzcd}
      \end{center}
      where  the vertical map~\(\kappa\) comes from Proposition~\ref{prop:Peifferpi2},
       the vertical maps between the third and the second row are induced by the covering map \(q\colon \widetilde{Z} \to Z\), and the vertical maps between the third and fourth row come from the (relative)~Hurewicz~Theorem.
      In particular, (relative)~Hurewicz~Theorem implies that~\(h_{3}\) and~\(h_{2}\) are isomorphisms and that~\(h_{2,\text{rel}}\) induces an isomorphism
      \[\pi_{2}(\widetilde{Z}^{2},\widetilde{Z}^{1})^{ab} \cong
      H_{2}(\widetilde{Z}^{2},\widetilde{Z}^{1}).\]
Note that the map $\partial_3^{\text{cell}} \colon H_3(\widetilde{Z},\widetilde{Z}^{2}) \to H_2(\widetilde{Z}^{2},\widetilde{Z}^{1})$ appears as the composition of the maps on the last row of this diagram.
Since~\(H_{3}(\widetilde{Z},\widetilde{Z}^{2}) \cong \pi_{3}(\widetilde{Z},\widetilde{Z}^{2}) \cong \pi_{3}(Z,Z^{2})\) is freely generated as a~\(\Z[G]\)-module by the characteristic maps~\(f^{3}_{s} \colon (D^{3},\partial D^{3}) \to (Z,Z^{2})\),
it remains to understand
$$\partial_3^{\text{cell}}([f^{3}_{s}])=j \circ \partial ([f^{3}_{s}]).$$
      For~\([f^{3}_{s}] \in H_{3}(\widetilde{Z},\widetilde{Z}^{2})\), we have~\(\partial([f^{3}_{s}]) = [f^{3}_{s}|_{\partial D^{2}}] = [g_{s}]\),  where $g_s \in \pi_2(Z^2)=\pi_2(X)$ are the maps along which we glued the $3$-cells.
It follows that \(s \in \kappa^{-1}( q_{\ast} (h_{2}^{-1}(\{[g_{s}]\})))\) and therefore the commutativity of the diagram gives
      \[\partial_{3}^{\text{cell}}([f^{3}_{s}]) = (j \circ \partial) ([f^{3}_{s}]) = j([g_{s}]) = h_{2,\text{rel}}(q_{\ast}^{-1}(\kappa(j(s)))).\]
      We calculate this last expression.
      Observe that for any~\(x = w \rho_{r_{0}} w^{-1} \in N(\mathcal{P})\) with~\(r_{0} \in \mathbf{r}\) we have
      \[h_{2,\text{rel}}(q_{\ast}^{-1}(\kappa(x))) = w \cdot f^{2}_{r_{0}} = \frac{\partial x}{\partial \rho_{r_{0}}} \cdot f^{2}_{r_{0}} = \sum_{r \in \mathbf{r}} \frac{\partial x}{\partial \rho_{r}} \cdot f^{2}_{r}.\]
Here the Fox~derivatives \(\frac{\partial x}{\partial \rho_{r}}\) are calculated in~\(F \ast P\) and we used that~\( \frac{\partial x}{\partial \rho_{r_{0}}}=w\) and~\( \frac{\partial x}{\partial \rho_{r}}=0\) for~$r \neq r_0$.
      
One then verifies that for any~\(y \in N(\mathcal{P})\), we have
      \[h_{2,\text{rel}}(q_{\ast}^{-1}(\kappa(y))) = \sum_{r \in \mathbf{r}} \frac{\partial y}{\partial \rho_{r}} \cdot f_{r}^{2}.\]
      Plugging this into the previous calculation we obtain 
      \[\partial_{3}^{\text{cell}}([f^{3}_{s}]) = h_{2,\text{rel}}(q_{\ast}^{-1}(\kappa(j(s)))) = \sum_{r \in \mathbf{r}} \frac{\partial s}{\partial \rho_{r}} \cdot [f^{2}_{r}],\]
This concludes the proof of the proposition.
  \end{proof}

  \begin{corollary}\label{corollary:3-dim-cw-complex-to-alg-data}
Any $3$-complex~\(Y\) determines a presentation \(\mathcal{P} = \langle \mathbf{x} \mid \mathbf{r} \rangle \) of~$\pi_1(Y)$
and a collection of identities~\(\mathbf{s}\) of~\(\mathcal{P}\), that is well-defined modulo Peiffer commutators.

      Furthermore,  $Y$ is homotopy equivalent to the $3$-complex~\(Z(\mathbf{x},\mathbf{r},\mathbf{s})\) from Proposition~\ref{prop:alg-data-to-3-dim-cw-complexes}.
  \end{corollary}
  \begin{proof}
      The \(2\)-skeleton of~\(Y\)  determines a presentation \(\mathcal{P} = \langle \mathbf{x} \mid \mathbf{r} \rangle\) of~\(\pi_{1}(Y)\).
      Using the isomorphism~\(\pi_{2}(Y^{2}) \cong I(\mathcal{P}) / [[N(\mathcal{P}),N(\mathcal{P})]]_{\psi}\) from Proposition~\ref{prop:Peifferpi2}, we see that the attaching maps of the \(3\)-cells of~\(Y\) determine identities \(s \in I(\mathcal{P})\), which are well-defined modulo Peiffer commutators.

      The homotopy equivalence~\(Y \simeq Z(\mathbf{x},\mathbf{r},\mathbf{s})\) follows from the fact that there is a bijection between cells of~\(Y\) and~\(Z(\mathbf{x},\mathbf{r},\mathbf{s})\) in each
      dimension, and the corresponding cells are attached via homotopic maps.
  \end{proof}

  \section{Symmetric structures of aspherical 3-manifolds and identities of presentation}
  \label{sec:symmetric-structure}

Given a $3$-manifold $M$, this section builds on Section~\ref{sec:Identities} to describe the symmetric structure on the handle chain complex $C_*^{\text{hnd}}(M;\Z[\pi_1(M)])$.
Section~\ref{sub:handle-chain-complex} recalls some facts about the handle chain complex.
Section~\ref{sub:ashp-3-manif} records a technical result concerning identities.
Section~\ref{sub:expl-form-symm} recalls work of Trotter according to which the symmetric structure can be calculated using identities and Fox calculus.
When $M$ admits a handle decomposition with a single $3$-handle, the results of this section and Section~\ref{sec:Identities} is summarised in Proposition~\ref{prop:Summary}.

  \subsection{The handle chain complex}
  \label{sub:handle-chain-complex}

  Let~\(M\) be a connected \(n\)-manifold that admits a handle decomposition.
  Assume that the handles are attached in increasing order of index.
  In what follows we denote the $n$-handles of $M$ by $h_i^n$ and, for $d \geq 0$, write $M^{(d)}$ for the submanifold of~\(M\) obtained by taking the union of all handles of indices~\(\leq d\).
  
  For $d \geq 0$, the associated handle chain complex has chain groups
   \[C_{d}^{\text{hnd}}(M) = H_{d}(M^{(d)},M^{(d-1)})\]
   and differentials 
     \[\partial^{\text{hnd}}_{d+1} \colon C_{d+1}^{\text{hnd}}(M) = H_{d+1}(M^{(d+1)},M^{(d)}) \xrightarrow{\partial} H_{d}(M^{(d)}) \xrightarrow{j} H_{d}(M^{(d)},M^{(d-1)}) = C_{d}^{\text{hnd}}(M),\]
  where $\partial$ is the connecting homomorphism in the long exact homology sequence of the pair~\((M^{(d+1)},M^{(d)})\) and $j$ is induced by inclusion of pairs \((M^{(d)},\emptyset) \subset
  (M^{(d)},M^{(d-1)})\).
  
  Let~\(p \colon \widetilde{M} \to M\) be the universal covering projection.
  The handle decomposition of $M$ lifts to a handle decomposition of $\widetilde{M}$ and we write~\(\widetilde{M}^{(d)} = p^{-1}(M^{(d)})\).
We can now consider the handle chain complex of $\widetilde{M}$:
  \[C_{\ast}^{\text{hnd}}(M;\Z[\pi_{1}(M)]) := C_{\ast}^{\text{hnd}}(\widetilde{M}).\]
  Using the action of~\(\pi_{1}(M)\) on~\(\widetilde{M}\),  each chain group~\(C^{\text{hnd}}_{d}(M;\Z[\pi_{1}(M)])\) becomes a free left~\(\Z[\pi_{1}(M)]\)-module.
These chain~\(\Z\pi_{1}(M)\)-modules are generated by a collection of lifts of the \(d\)-dimensional handles of~\(M\) where we
  choose one lift for each handle.
In what follows, we implicitly choose (arbitrarily) such lifts and use them as a basis for the relevant chain groups.

\begin{construction}
\label{cons:3Complex}
  By contracting each handle to its core,  one obtains a \(3\)-complex~\(X\) that embeds in~\(M\), \(\iota \colon X \hookrightarrow M\) and onto which $M$ deformation retracts via a map~\(r \colon M \to X\).
 We write~\(e_{i}^{j}\) for the \(j\)-dimensional cell of~\(X\) which forms the core of the handle~\(h_{i}^{j}\).
 \end{construction}

  \begin{lemma}\label{lemma:comparison-handle-cellular-chain-cplx}
      The inclusion map \(\iota \colon X \hookrightarrow M\) induces chain isomorphisms
      \[\iota_{\ast} \colon C^{\text{cell}}_{\bullet}(X) \cong C^{\text{hnd}}_{\bullet}(M), \quad \iota_{\ast} \colon C^{\text{cell}}_{\bullet}(X;\Z[\pi_{1}(X)]) \cong
          C^{\text{hnd}}_{\bullet}(M;\Z[\pi_{1}(M)]).\]
Every cell~\(e^{d}_{j}\) of \(X\) gives rise to a generator \([e^{d}_{j}]\) in~\(C^{\text{cell}}_{d}(M;\Z[\pi_{1}(M)])\) and every handle~\(h^{d}_{j}\) of~\(M\) gives rise to a generator~\(h^{d}_{j} \in C^{\text{hnd}}_{d}(M;\Z[\pi_{1}(M)])\).
      The map induced by~\(\iota\), maps each generator~\(e^{d}_{j}\) to the corresponding generator~\(h^{d}_{j}\).
  \end{lemma}
  \begin{proof}
      Since~\(\iota\) preserves the respective filtrations $\lbrace X^{(d)} \rbrace_d$ of~\(X\) and $\lbrace M^{(d)} \rbrace_d$ of~\(M\), it follows that~\(\iota\) induces isomorphisms of respective chain groups.
The naturality of the exact sequence of a pair in homology implies that~\(\iota\) is a chain map,  and hence the lemma follows.
  \end{proof}

\subsection{Aspherical \(3\)-manifolds and identities of presentation}
\label{sub:ashp-3-manif}

This short section record a technical lemma concerning identities.
A $1$- and $2$- handles in the handle decomposition $\mathcal{H}$ of a connected manifold $M$ determine a presentation $\mathcal{P}_{\mathcal{H}}$ of $\pi_1(M)$.
The next lemma notes that the $3$-handles of $M$ determine identities for this presentation.

  \begin{lemma}\label{lemma:generating-identities}
If~\(M\) is a closed, connected, oriented, aspherical \(3\)-manifold admitting a handle decomposition $\mathcal{H}$ with~\(k+1\) \(3\)-handles, for some~\(k \geq 0\), then the presentation~\(\mathcal{P}_{\mathcal{H}}\) of~\(\pi_{1}(M)\) admits a complete set of identities
$$s_{1},\ldots,s_{k+1}  \in I(\mathcal{P}_{\mathcal{H}})$$
 such that, modulo Peiffer commutators, any other identity can be written (uniquely up to order of factors) as a product of conjugates of the~\(s_i\).
  \end{lemma}
  \begin{proof}
      Since~\(M\) is aspherical, it follows that~\(H_i(\widetilde{M})=0\), for~\(i \geq 1\).
      The Hurewicz theorem and  the long exact sequence of the pair~\((\widetilde{M},\widetilde{M}^{(2)})\) gives rise to isomorphisms
$$\pi_2(M^{(2)}) \cong H_{2}(\widetilde{M}^{(2)}) \cong H_{3}(\widetilde{M},\widetilde{M}^{(2)}) \cong \Z[\pi_{1}(M)]^{k+1} $$
Let~\(f_{1},\ldots,f_{k+1} \in \pi_{2}(M^{(2)})\) be a~\(\Z[\pi_{1}(M)]\)-basis and consider the isomorphism
      \[\kappa_{I} \colon I(\mathcal{P_{\mathcal{H}}}) / [[N(\mathcal{P}_{\mathcal{H}}),N(\mathcal{P}_{\mathcal{H}})]]_{\psi} \cong \pi_{2}(M^{(2)})\]
      from Proposition~\ref{prop:Peifferpi2}.
A complete set of identities with the required properties is now given by considering~\(s_{i} = \kappa^{-1}_{I}(f_{i})\), for~\(i=1,2,\ldots,k+1\).
  \end{proof}
  
  Taking $k=0$ in the previous lemma gives the following result.

  \begin{proposition}\label{prop:generating-identities}
If~\(M\) is a closed, connected, oriented, aspherical \(3\)-manifold admitting a handle decomposition $\mathcal{H}$ with a single \(3\)-handle, 
then the presentation~\(\mathcal{P}_{\mathcal{H}}\) of~\(\pi_{1}(M)\) admits a unique (up to conjugation and modulo Pfeiffer commutators) identity 
$$ s  \in I(\mathcal{P}_{\mathcal{H}}).$$
%
  \end{proposition}

\subsection{Explicit formulas for the symmetric structure of a closed aspherical \(3\)-manifold}
\label{sub:expl-form-symm}

Given a $3$-manifold $M$, this section collects the work from Section~\ref{sec:Identities} to describe the differentials and symmetric structure on chain complex $C_*^{\text{hnd}}(M,\Z[\pi_1(M)]).$

\medbreak
Given a closed,  connected, oriented, aspherical \(3\)-manifold $M$,  with a handle decomposition $\mathcal{H}$,
we would like to obtain an explicit formula for the map
\[\Phi \colon C^{2}_{\text{hnd}}(M;\Z[\pi_{1}(M)]) \to C_{1}^{\text{hnd}}(M;\Z[\pi_{1}(M)])\]
induced by the symmetric structure on $C^{3-*}(M;\Z[\pi_{1}(M)])$.

For the $3$-complex $X$ obtained from $M$ by deformation retracting each handle to its core (as in Construction~\ref{cons:3Complex})
the most important ingredient that goes into the calculation of the map~\(\Phi\) (namely the diagonal chain map) was worked out by Trotter~\cite[Section~2.4]{TrotterSystem}, see also~\cite[Section~3.3]{MillerPowell}.
Building on this result we obtain the following proposition.

\begin{proposition}\label{prop:symmetric-structure}
    Let~\(M\) be a closed,  connected,  oriented,  aspherical \(3\)-manifold that admits a handle decomposition $\mathcal{H}$.
    Let
    \[\mathcal{P}_{\mathcal{H}} = \langle \mathbf{x} \mid \mathbf{r} \rangle\]
    denote the presentation of~\(\pi_{1}(M)\) determined by~\(\mathcal{H}\) and let \(\mathbf{s}\) denote the set of identities from Lemma~\ref{lemma:generating-identities}.
    Each \(s \in \mathbf{s}\) can be written in the form
    \begin{equation}
        \label{eq:expression-identity}
        s = \prod_{j=1}^{l_{s}} w_{s,j} \rho_{r_{s,j}}^{\epsilon_{s,j}} w_{s,j}^{-1},
    \end{equation}
    where~\(r_{s,j} \in \mathbf{r}\) and~\(\epsilon_{s,j} \in \{\pm 1\}\), for all~\(s\) and~\(j\).
 
    Endow  $C_{2}^{\text{hnd}}(M;\Z[\pi_{1}(M)])$ with a basis~\(h_{r}^{2}\), for \(r \in \mathbf{r}\), induced by the handle structure~$\mathcal{H}$.
    Denote by~\((h^{2}_{r})^{\#}\), for \(r \in \mathbf{r}\), the dual basis of~\(C^{2}_{\text{hnd}}(M;\Z[\pi_{1}(M)])\).
    With respect to these bases, the coefficients of the matrix of the map~\(\Phi\) is given by the formula
\[\Phi((h^{2}_{r})^{\#}) = \sum_{(s,j) \in \mathcal{I}(r)} \sum_{x \in \mathbf{x}} \epsilon_{s,j} w_{s,j}^{-1} \frac{\partial w_{s,j}}{\partial x} h^{1}_{x},\]
where~\(\mathcal{I}(r) = \{(s,j) \in \mathbf{s} \times \Z_{>0} \colon r_{s,j} = r\}\), for \(r \in \mathbf{r}\).
In other words, for a specified \(r \in \mathbf{r}\), in the interior sum includes only the terms~\(w_{s,j} \rho_{r_{s,j}} w_{s,j}^{-1}\) coming from~\eqref{eq:expression-identity} which are conjugates of~\(\rho_{r}^{\pm1}\).
\end{proposition}
\begin{proof}
    This formula can be deduced from~\cite[Section 3.3]{MillerPowell}.
\end{proof}

For later use, we summarise the considerations of this section and the previous one in the case of an aspherical $3$-manifold endowed with a handle decomposition with a single $3$-handle.

\begin{proposition}\label{prop:Summary}
    Let~\(M\) be a closed, connected, oriented, aspherical \(3\)-manifold admitting a handle decomposition $\mathcal{H}$ with a single \(0\)-handle and a single \(3\)-handle, and let
    \[s = \prod_{j=1}^{l} w_{j} \rho_{r_{j}}^{\epsilon_{j}} w_{j}^{-1}.\]
    be the identity of the presentation~\(\mathcal{P}_{\mathcal{H}}=\langle \textbf{x} \mid \textbf{r} \rangle \) of~\(\pi_{1}(M)\) from Proposition~\ref{prop:generating-identities}.
 \begin{itemize}
 \item The handle chain module $C_0^{\text{hnd}}(M;\Z[\pi_{1}(M)])$ is freely generated by the 
 class of a lift the \(0\)-handle~$h^{0}$.
 \item The handle chain module $C_1^{\text{hnd}}(M;\Z[\pi_{1}(M)]$ is freely generated by 
classes of lifts of the \(1\)-handles $\{h^{1}_{x} \colon x \in \mathbf{x}\}.$
 \item  The handle chain module $C_2^{\text{hnd}}(M;\Z[\pi_{1}(M)]$ is freely generated by the classes of lifts of the~\(2\)-handles~\(\{h^{2}_{r} \colon r \in \mathbf{r}\}\),
 \item The handle chain module $C_3^{\text{hnd}}(M;\Z[\pi_{1}(M)]$ is freely generated by the class of a lift of the \(3\)-handle~$h^{3}_{s}$.
 \end{itemize}
The differentials can be expressed in terms of Fox derivatives as
\begin{align*}
  \partial_{1}(h^{1}_{x}) &= (x-1)h^{0}, \quad \text{\ for\ } x \in \mathbf{x}, \\
  \partial_{2}(h^{2}_{r}) &= \sum_{x \in \mathbf{x}} \frac{\partial r}{\partial x} h^{1}_{x}, \quad \text{\ for\ } r \in \mathbf{r}, \\
  \partial_{3}(h^{3}_{s}) &= \sum_{r \in \mathbf{r}} \frac{\partial s}{\partial \rho_{r}} h^{2}_{r},
\end{align*}
where the Fox~derivatives~\(\frac{\partial s}{\partial \rho_{r}}\) are computed in~\(F \ast P\).

With respect to these bases, the map~\(\Phi\) is given by the formula
\[\Phi((h^{2}_{r})^{\#}) = \sum_{x \in \mathbf{x}} \sum_{j \in \mathcal{I}(r)} \epsilon_{j} w_{j}^{-1} \frac{\partial w_{j}}{\partial x} h^{1}_{x},\]
where~\(\mathcal{I}(r) = \{j \in \Z_{>0} \colon r_{j} = r\}\), for \(r \in \mathbf{r}\).
\end{proposition}
\begin{proof}
The calculation of the handle chain complex follows from Corollary~\ref{corollary:3-dim-cw-complex-to-alg-data} and Lemma~\ref{lemma:comparison-handle-cellular-chain-cplx}.
The formula for the symmetric structure comes from Proposition~\ref{prop:symmetric-structure}.
\end{proof}

\color{black}

\section{Explicit computations for the torus knot~$T_{2,2k+1}$}
\label{sec:ExplicitComputations}
We now describe how to apply the algorithm mentioned in Subsection~\ref{sub:Algorithm} (see also Algorithm~\ref{rem:Algorithm} and Remark~\ref{rem:AlgoTop}) to metabelian Blanchfield forms of $(2,2k+1)$-torus knots.
In the notation of Section~\ref{sec:MetabelianBlanchfield}, these pairings are of the form $\Bl_{\alpha(2,\chi)}(T_{2,2k+1})$, where $\chi \colon H_1(\Sigma_2(T_{2,2k+1})) \to \Z_{2k+1}$ is a character.\footnote{In Section~\ref{sec:MetabelianBlanchfield}, the characters were assumed to be of prime power order to ensure that $\alpha_K(n,\chi)$ is acyclic. In this section, $\alpha_{T_{2,2k+1}}(2,\chi)$ will turn out to be acyclic for \emph{any} $\chi$ and so no prime power assumption is needed.}
In more detail, Subsection~\ref{sub:ChainComplex} describes the chain complex $C_*(\widetilde{M}_{T_{2,2k+1}})$; Subsection~\ref{sub:blanchf-forms-twist}, uses Powell's algorithm (as described in Subsection~\ref{sub:review_powell}) to understand $\Bl_{\alpha(2,\chi)}(T_{2,2k+1})(x,x)$ for any $x \in H_1(M_{T_{2,2k+1}};\LC^2_{\alpha(2,\chi)})$ (in fact we will be working with cohomological pairing) and Subsection~\ref{sub:MetabelianSignaturesTorusKnot} applies the algorithm from Remark~\ref{rem:Algorithm} to deduce the isometry type of $\Bl_{\alpha(2,\chi)}(T_{2,2k+1})$, thus proving Theorem~\ref{thm:DecompoTwistedBlanchfieldIntro} from the introduction.
\color{black}

Throughout this section, we will be working with the diagram of \(T_{2,2k+1}\) depicted in Figure~\ref{fig:diagram-T_2_2k+1}.
\begin{figure}[!htb]
  \centering
  \def\svgscale{0.7}
  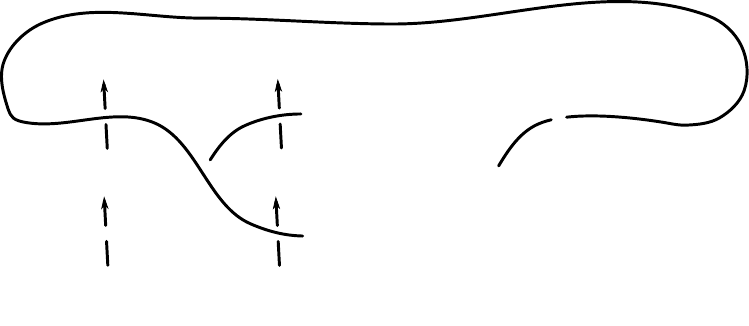
  \caption{A diagram of \(T_{2,2k+1}\) together with generators of the knot group. Arrows indicate orientation of the respective meridian when going under the knot. The blue loop is \(a = x_{2k}x_{2k+1}=x_{1}x_{2}\).}
  \label{fig:diagram-T_2_2k+1}
\end{figure}

\subsection{The chain complex of the universal cover}
\label{sub:ChainComplex}

In this subsection, we explicitly describe the chain complex of the universal cover of the~$0$-framed surgery~$M_{T_{2,2k+1}}$.
\medbreak
We start by providing a presentation for the fundamental group of~$M_{T_{2,2k+1}}$.
Such a presentation can be computed from the knot group once a word for the longitude of~$T_{2,2k+1}$ is known.
Using Figure~\ref{fig:diagram-T_2_2k+1}, a Wirtinger presentation of the fundamental group of the complement of \(T_{2,2k+1}\) is
\[\pi := \pi_{1}(S^{3} \setminus T_{2,2k+1}) = \langle x_{1}, x_{2}, \ldots, x_{2k+1} \mid r_{1}, r_{2}, \ldots, r_{2k} \rangle,\]
where \(r_{i} = x_{i} x_{i+1} x_{i+2}^{-1} x_{i+1}^{-1}\) and the indices are taken mod \(2k+1\).
We will now simplify the presentation. A standard argument is recalled, because
it is used
to describe the presentation of the fundamental group of~$M_{T_{2,2k+1}}$.

\begin{lemma}
  \label{lem:PresentationTorusKnot}
  There exists an isomorphism \(\phi \colon \pi \to G:=\langle a, b \mid a^{2k+1}b^{2} \rangle\) such that
  \begin{align*}
    \phi(x_{2(k-s)+1}) &= a^{s} (a^{k}b)^{-1} a^{-s}, \quad 0 \leq s \leq k, \\ 
    \phi(x_{2(k-s)}) &= a^{s} (a^{k+1}b) a^{-s}, \quad 0 \leq s \leq k-1.
  \end{align*}
\end{lemma}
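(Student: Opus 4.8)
The plan is to transform the Wirtinger presentation into the target presentation by a sequence of Tietze moves, which automatically yields an \emph{isomorphism} rather than merely a homomorphism, so no separate injectivity/surjectivity argument is needed. First I would rewrite each relator $r_i = x_i x_{i+1}x_{i+2}^{-1}x_{i+1}^{-1}$ in the equivalent form $x_i x_{i+1} = x_{i+1}x_{i+2}$, which forces the quantity $x_i x_{i+1}$ to be independent of $i$. Setting $a := x_1 x_2$ (the blue loop of Figure~\ref{fig:diagram-T_2_2k+1}) and telescoping $r_1,\ldots,r_{2k}$ then gives $x_i x_{i+1} = a$ for every $i$, where the wrap-around equality $x_{2k+1}x_1 = a$ is exactly the content of $r_{2k}$ (indices mod $2k+1$). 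Substituting $x_{i+1} = x_i^{-1}a$ back into the relators produces the conjugation rule $x_{i+2} = a^{-1}x_i a$.

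Next I would exploit that $\gcd(2,2k+1)=1$: iterating $x_{i+2} = a^{-1}x_i a$ starting from $c := x_{2k+1}$ visits all $2k+1$ indices, so every generator becomes an explicit conjugate of $c$ by a power of $a$. Concretely this gives $x_{2(k-s)+1} = a^s c a^{-s}$ for the odd indices, and, after using $x_{2k}x_{2k+1}=a$ to rewrite $x_{2k} = a c^{-1}$, it gives $x_{2(k-s)} = a^{s+1}c^{-1}a^{-s}$ for the even ones. Introducing the new generator $b := a^{-k}c^{-1}$ and eliminating $c = b^{-1}a^{-k} = (a^k b)^{-1}$ turns these into precisely the stated formulas $\phi(x_{2(k-s)+1}) = a^s(a^k b)^{-1}a^{-s}$ and $\phi(x_{2(k-s)}) = a^s(a^{k+1}b)a^{-s}$; I would record this as a direct substitution check. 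The consistency identities $\phi(x_1)\phi(x_2) = a = \phi(x_{2k})\phi(x_{2k+1})$ then confirm compatibility with $a = x_1 x_2 = x_{2k}x_{2k+1}$.

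At this stage exactly one relation survives the elimination, namely the compatibility between the two ways of reaching $x_{2k}$: the value $a^{k+1}c^{-1}a^{-k-1}$ obtained by conjugating and the value $a c^{-1}$ obtained from $x_{2k}x_{2k+1}=a$. Rewriting this surviving relation in terms of $a$ and $b$ via $c = b^{-1}a^{-k}$ collapses it to $b^2 a^{2k+1} = 1$, equivalently the single defining relation $a^{2k+1}b^2$ of $G$. Assembling the Tietze moves (add $a$, eliminate $x_1,\ldots,x_{2k}$, substitute $b$) then identifies $\pi$ with $G$ and realises $\phi$.

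The main obstacle I anticipate is the bookkeeping rather than any deep idea: one must track which of the relations — the original $r_1,\ldots,r_{2k}$ together with the Tietze relation $a = x_1 x_2$ — are consumed in eliminating the $2k$ generators $x_1,\ldots,x_{2k}$, and confirm that exactly one relation survives. One must also handle the index arithmetic modulo $2k+1$ carefully, since the interleaving of odd and even indices forced by $\gcd(2,2k+1)=1$ is what produces the asymmetric-looking but correct pair of formulas. The algebra reducing the surviving relation to $a^{2k+1}b^2$ is short but must be carried out with attention to non-commutativity.
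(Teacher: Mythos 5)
Your proposal is correct and follows essentially the same route as the paper: both arguments are Tietze-move eliminations of the Wirtinger generators arriving at the same one-relator presentation (the surviving relation $c a^{k} c = a^{k+1}$, which is the paper's $r_{2k}'$) and the identical change of generators $a = x_{2k}x_{2k+1}$, $b = (x_{2k}x_{2k+1})^{-k}x_{2k+1}^{-1}$, your conjugation rule $x_{i+2} = a^{-1}x_{i}a$ being the paper's recursive elimination in disguise. One slip to fix: the conjugation value of $x_{2k}$ is $a x_{1} a^{-1} = a^{k+1} c\, a^{-k-1}$, not $a^{k+1} c^{-1} a^{-k-1}$; equating your literal expression with $a c^{-1}$ would yield $b^{-1} a^{k} b = a^{k+1}$ rather than $c a^{k} c = a^{k+1}$, which is the relation that actually collapses to $a^{2k+1}b^{2} = 1$ under $c = b^{-1}a^{-k}$.
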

\begin{proof}
  We first prove that \(\pi\) is generated by \(x_{2k}\) and \(x_{2k+1}\).
  Notice that the relation \(r_{i}\) implies that the equality \(x_{i}x_{i+1} = x_{i+1}x_{i+2}\) holds.
  As a consequence, we obtain \(x_{i} x_{i+1} = x_{i+s}x_{i+s+1}\) for any \(i\) and \(s\) and, applying this formula recursively, we obtain
  \begin{align*}
    x_{2k-1} &\stackrel{r_{2k-1}}{=} x_{2k}x_{2k+1}x_{2k}^{-1}, \\
    x_{2k-2} &\stackrel{r_{2k-2}}{=} x_{2k-1}x_{2k}x_{2k-1}^{-1} = (x_{2k} x_{2k+1}) x_{2k} (x_{2k}x_{2k+1})^{-1}, \\
    x_{2k-3} &\stackrel{r_{2k-3}}{=} x_{2k-2}x_{2k-1}x_{2k-2}^{-1}  \\
             &= (x_{2k}x_{2k+1}x_{2k}x_{2k+1}^{-1}x_{2k}^{-1})(x_{2k}x_{2k+1}x_{2k}^{-1})(x_{2k}x_{2k+1}x_{2k}^{-1}x_{2k+1}^{-1}x_{2k}^{-1})  \\
             &= (x_{2k}x_{2k+1})x_{2k}x_{2k+1}x_{2k}^{-1}(x_{2k}x_{2k+1})^{-1}.
  \end{align*}
  As a consequence, we eliminate the generators \(x_{1}, x_{2}, \ldots, x_{2k-1}\) and the relations \(r_{1}, r_{2}, \ldots, r_{2k-1}\) since we have the following equalities:
  \begin{align*}
    x_{2(k-s)+1} &= (x_{2k}x_{2k+1})^{s-1} x_{2k}x_{2k+1}x_{2k}^{-1} (x_{2k}x_{2k+1})^{-s+1}, \quad 1 \leq s \leq k, \\
    x_{2(k-s)} &= (x_{2k}x_{2k+1})^{s} x_{2k+1} (x_{2k}x_{2k+1})^{s}, \quad 1 \leq s \leq k-1.
  \end{align*}
  Reformulating, we have proved that~$\pi$ is indeed generated by~$x_{2k}$ and~$x_{2k+1}$ and we have obtained the presentation~$\pi = \langle x_{2k}, x_{2k+1} \mid r_{2k}' \rangle$, where
  \begin{align*}
    r_{2k}' &= x_{2k} x_{2k+1} x_{1}^{-1} x_{2k+1}^{-1} = x_{2k} x_{2k+1} (x_{2k} x_{2k+1})^{k-1} x_{2k} x_{2k+1}^{-1} x_{2k}^{-1} (x_{2k}x_{2k+1})^{-k+1} x_{2k+1}^{-1} \\
            &= (x_{2k} x_{2k+1})^{k} x_{2k} (x_{2k} x_{2k+1})^{-k} x_{2k+1}^{-1}.
  \end{align*}
  Using this presentation of~$\pi$, it is now straightforward to verify that \(\phi \colon \pi \to G \) is a group isomorphism, where the inverse is defined by setting~$\phi^{-1}(a) = x_{2k}x_{2k+1}$ and~$\phi^{-1}(b) = (x_{2k+1}^{-1}x_{2k}^{-1})^{k} x_{2k+1}^{-1}$.
  This concludes the proof of the lemma.
\end{proof}

As a consequence, we can describe the fundamental group of the 0-framed surgery on~$T_{2,2k+1}$.

\begin{lemma}
  \label{lem:Presentation0Surgery}
  The fundamental group of the \(0\)-surgery on \(T_{2,2k+1}\) admits the following presentation:
  \begin{equation}\label{eq:presentation-fund-gp-zero-surgery}
    G_{0} = \langle a, b \mid a^{2k+1}b^{2}, (a^{k}b)^{2k+1} a^{2k+1} (a^{k}b)^{2k+1}\rangle.
  \end{equation}
\end{lemma}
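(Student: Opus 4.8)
The plan is to use the standard fact that the fundamental group of $0$-surgery is the knot group modulo the normal closure of the $0$-framed (Seifert) longitude, so that $G_0 = \pi_1(M_{T_{2,2k+1}}) = \pi/\langle\langle \ell \rangle\rangle$, where $\ell$ is the $0$-framed longitude written as a word in the generators. Since Lemma~\ref{lem:PresentationTorusKnot} already supplies the isomorphism $\pi \cong G = \langle a,b \mid a^{2k+1}b^2\rangle$, the whole task reduces to identifying $\ell$ as an element of $G$ and checking that it coincides, up to inversion and conjugation (which do not affect the normal closure), with the second relator $(a^kb)^{2k+1}a^{2k+1}(a^kb)^{2k+1}$.

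First I would pin down the two peripheral ingredients. Taking $s=0$ in the first displayed formula of Lemma~\ref{lem:PresentationTorusKnot} identifies the meridian $\mu := x_{2k+1}$ with $(a^kb)^{-1}$, so that $\mu^{-1}=a^kb$. Next, the defining relation $a^{2k+1}b^2=1$ shows that $z:=a^{2k+1}=b^{-2}$ generates the center of $G$: under the identification $u=b^{-1}$, $v=a$ the presentation becomes $\langle u,v\mid u^2=v^{2k+1}\rangle$, whose center (as $2$ and $2k+1$ are coprime) is infinite cyclic generated by the familiar element $u^2=v^{2k+1}=z$.

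With these in hand, I would invoke the classical description of the longitude of a torus knot: for $T_{p,q}$ the $0$-framed longitude is $\ell = z\,\mu^{-pq}$, with $z$ the central element and $\mu$ the meridian (see Burde--Zieschang). Here $p=2$, $q=2k+1$, so $pq=4k+2$ and
\[
\ell = z\,\mu^{-(4k+2)} = a^{2k+1}(a^kb)^{4k+2} = a^{2k+1}(a^kb)^{2k+1}(a^kb)^{2k+1} = (a^kb)^{2k+1}a^{2k+1}(a^kb)^{2k+1},
\]
the final equality using that $z=a^{2k+1}$ is central. This is precisely the second relator, so adjoining it to $G$ produces the presentation~\eqref{eq:presentation-fund-gp-zero-surgery}.

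The one point that genuinely needs care is the framing, i.e.\ justifying the exponent $pq=4k+2$ on the meridian. Rather than merely citing it, I would verify the two properties that characterize the Seifert longitude directly. That $\ell$ commutes with $\mu$ is immediate, since $z$ is central and $\mu^{-(4k+2)}$ is a power of $\mu$. That $\ell$ is null-homologous is a one-line abelianization check: all Wirtinger generators are meridians mapping to a generator $t\in\Z$, so $\mu=x_{2k+1}\mapsto t$ and $a=x_{2k}x_{2k+1}\mapsto t^2$, forcing $b\mapsto t^{-(2k+1)}$; hence $z\mapsto t^{4k+2}$ and $\mu^{-(4k+2)}\mapsto t^{-(4k+2)}$, giving $\ell\mapsto 1$. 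The null-homology requirement simultaneously selects the sign of the framing correction and confirms it equals $pq$; together with centrality this singles out $\ell$ as the $0$-framed longitude. If a fully self-contained derivation were preferred, the same word for $\ell$ can instead be extracted from the Wirtinger presentation of Figure~\ref{fig:diagram-T_2_2k+1} by traversing the knot, recording the over-crossing meridians, correcting the meridional exponent to zero, and translating through $\phi$; but the peripheral-subgroup argument above is shorter and I expect it to be the cleanest route.
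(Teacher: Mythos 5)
Your proof is correct, but it takes a genuinely different route from the paper. The paper argues via van Kampen and then computes the longitude completely explicitly from the diagram: it reads off the word \(\lambda = x_{2k+1}^{-2k-1}\, x_1 x_3 \cdots x_{2k+1}\, x_2 x_4 \cdots x_{2k}\) from the Wirtinger presentation of Figure~\ref{fig:diagram-T_2_2k+1} and pushes it through the isomorphism \(\phi\) of Lemma~\ref{lem:PresentationTorusKnot}, using the identity \(a^{-k-1}b^{-1} = a^k b\) (a consequence of \(a^{2k+1}b^2=1\)) to land on \((a^kb)^{2k+1}a^{2k+1}(a^kb)^{2k+1}\). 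You instead exploit the structure of the torus knot group: you identify the central element \(z = a^{2k+1} = b^{-2}\), invoke the classical formula \(\lambda = z\mu^{-pq}\) for the Seifert longitude of \(T_{p,q}\), and then use centrality to rewrite \(a^{2k+1}(a^kb)^{4k+2}\) in the paper's symmetric form; your algebra and your abelianization check (\(a \mapsto t^2\), \(b \mapsto t^{-(2k+1)}\), hence \(\mu \mapsto t\) and \(\ell \mapsto 1\)) are all correct. Your route is shorter and more conceptual, while the paper's is self-contained and diagrammatic --- which matters here, since the same conventions and the explicit word \(\lambda_0\) are reused immediately afterwards in Lemma~\ref{lemma:identity-of-presentation} and in the Fox calculus of Proposition~\ref{prop:cellular-chain-complex}. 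One caveat on your claim that peripherality plus null-homology ``singles out'' \(\ell\): those two properties by themselves only place \(\ell\) among the powers \(\lambda^n\) of the longitude (and since \(\langle\langle \lambda^n \rangle\rangle \neq \langle\langle \lambda \rangle\rangle\) in general, the exponent matters); pinning \(n=\pm1\) requires additionally that the centralizer of \(\mu\) is exactly \(\langle \mu, z\rangle\) --- true for torus knot groups, e.g.\ via the quotient \(G/\langle z\rangle \cong \Z_2 * \Z_{2k+1}\), together with torsion-freeness --- so as written the weight of your argument really rests on the cited Burde--Zieschang formula, which is a legitimate but not fully self-contained foundation.
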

\begin{proof}
  Let \( (\mu,\lambda) \) be the meridian-longitude pair of \(T_{2,2k+1}\) expressed in words of~$\pi$.
  Using van Kampen's theorem, to prove the proposition, it suffices to show that~$\phi(\lambda) = (a^{k}b)^{2k+1}a^{2k+1}(a^{k}b)^{2k+1}$, where~$\phi$ is the isomorphism described in Lemma~\ref{lem:PresentationTorusKnot}.
  Working with the Wirtinger presentation arising from Figure~\ref{fig:diagram-T_2_2k+1}, we choose~$(\mu,\lambda)$ as follows:
  \[\mu = x_{2k+1}, \quad \lambda = x_{2k+1}^{-2k-1} x_{1} x_{3} x_{5} \cdots x_{2k+1} x_{2} x_{4} \cdots x_{2k}.\]
  Next, the definition of~$\phi$ gives~$\phi(x_{1} x_{3} \cdots x_{2k+1}) = a^{k} (b^{-1} a^{-k-1})^{k+1} a = a^{2k+1} (a^{-k-1}b^{-1})^{k+1} a^{-k}$ and~$\phi(x_{2} x_{4} \ldots x_{2k}) = a^{k} (a^{k}b)^{k}$.
  Since the relation \(a^{2k+1}b^{2}=1\) holds, we have \(a^{-k-1}b^{-1} = a^{k}b\), we deduce that~$\phi(x_{1}x_{3} \cdots x_{2k+1} x_{2} \cdots x_{2k}) = a^{2k+1} (a^{k}b)^{2k+1}$, and the lemma is now concluded by recalling the definition of~$\lambda$.
\end{proof}

We now describe a handle decomposition for~$M_{T_{2,2k+1}}$.
Recall from Remark~\ref{rem:HandleDiagram} that it is possible to obtain a handle decomposition of $M_{T_{2,2k+1}}$ from a reduced diagram for $T_{2,2k+1}$ and that this decomposition can be used to calculate twisted Blanchfield forms.
  While this handle decomposition is easy to describe, it has one serious disadvantage: the number of handles grows with \(k\).
  To be able to perform computations for the whole family of torus knots \(T_{2,2k+1}\), for~\(k \geq 1\), we use a handle decomposition with far fewer handles.

\begin{construction}
We describe an explicit handle decomposition for the exterior $X_{T_{2,2k+1}}$.
Our strategy is to first describe~$X_{T_{2,2k+1}}$ as a union of three solid tori $U_1,U_2$ and $V_1$.
We will then read off a handle decomposition for~$X_{T_{2,2k+1}}$ from a handle decomposition of the solid tori.

  Consider the standard genus-one Heegaard decomposition~\(S^{3} = H_{1} \cup_{\partial} H_{2}\) with Heegaard surface the torus~\(T = H_{1} \cap H_{2}\).
  The solid tori $H_1$ and $H_2$ are sketched on the left frame of Figure~\ref{fig:HeegaardSplitting}.
Pick two parallel copies~$K_1,K_2 \subset T$ of the torus knot $T_{2,2k+1}$ that lie on the torus $T$ as illustrated in the central frame of Figure~\ref{fig:HeegaardSplitting}.
The right hand side of this figure shows closed neighbourhoods~$\overline{\nu}(K_1) ,\overline{\nu}(K_2) \subset T$ that satisfy 
$$T = \overline{\nu}(K_1) \cup \overline{\nu}(K_2) \text{ and } \overline{\nu}(K_1) \cap \overline{\nu}(K_2) =  \partial \overline{\nu}(K_1) = \partial \overline{\nu}(K_2).$$
Shrink the solid tori~$H_1 \cong S^1 \times D^2$ and $H_2 \cong S^1 \times D^2$ to obtain half-sized solid tori $S^1 \times D^2_{\frac{1}{2}} \cong U_i \subset H_i$.
   It follows that~\(V := \overline{S^{3} \setminus (U_{1} \cup U_{2})}\) is a tubular neighbourhood of~\(T \subset S^3\) and can be identified  with~$V \cong T \times I \cong (\overline{\nu}(K_1) \cup \overline{\nu}(K_2)) \times I$.
Since we shrank the solid tori $H_1$ and $H_2$ but expanded the torus $T=\overline{\nu}(K_1) \cup \overline{\nu}(K_2)$, we obtain
 $$S^3=V \cup U_1 \cup U_2=(\overline{\nu}(K_1) \times I) \cup (\overline{\nu}(K_2) \times I)    \cup U_1 \cup U_2.$$

\begin{figure}[!htbp]
    \centering
    \subcaptionbox{The Heegard splitting of~\(S^{3}\).}[0.3\textwidth]{\def\svgscale{0.3}
\begingroup%
  \makeatletter%
  \providecommand\color[2][]{%
    \errmessage{(Inkscape) Color is used for the text in Inkscape, but the package 'color.sty' is not loaded}%
    \renewcommand\color[2][]{}%
  }%
  \providecommand\transparent[1]{%
    \errmessage{(Inkscape) Transparency is used (non-zero) for the text in Inkscape, but the package 'transparent.sty' is not loaded}%
    \renewcommand\transparent[1]{}%
  }%
  \providecommand\rotatebox[2]{#2}%
  \newcommand*\fsize{\dimexpr\f@size pt\relax}%
  \newcommand*\lineheight[1]{\fontsize{\fsize}{#1\fsize}\selectfont}%
  \ifx\svgwidth\undefined%
    \setlength{\unitlength}{336.47744768bp}%
    \ifx\svgscale\undefined%
      \relax%
    \else%
      \setlength{\unitlength}{\unitlength * \real{\svgscale}}%
    \fi%
  \else%
    \setlength{\unitlength}{\svgwidth}%
  \fi%
  \global\let\svgwidth\undefined%
  \global\let\svgscale\undefined%
  \makeatother%
  \begin{picture}(1,0.97280405)%
    \lineheight{1}%
    \setlength\tabcolsep{0pt}%
    \put(0,0){\includegraphics[width=\unitlength,page=1]{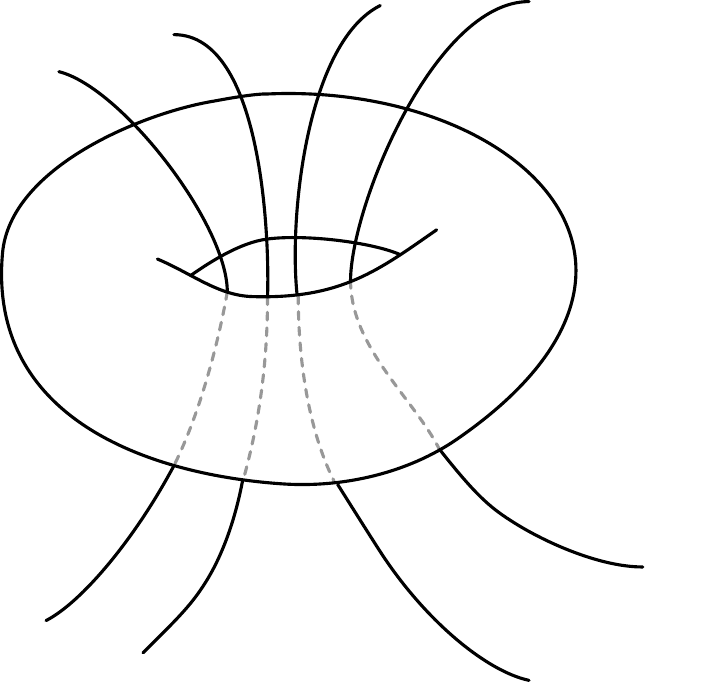}}%
    \put(0.56624401,0.54945807){\color[rgb]{0,0,0}\makebox(0,0)[lt]{\lineheight{1.25}\smash{\begin{tabular}[t]{l} \(H_{1}\)\end{tabular}}}}%
    \put(0.00188523,0.8790315){\color[rgb]{0,0,0}\makebox(0,0)[lt]{\lineheight{1.25}\smash{\begin{tabular}[t]{l} \(H_2\)\end{tabular}}}}%
    \put(0.90580837,0.77010436){\color[rgb]{0,0,0}\makebox(0,0)[lt]{\lineheight{1.25}\smash{\begin{tabular}[t]{l} \(T\)\end{tabular}}}}%
    \put(0,0){\includegraphics[width=\unitlength,page=2]{Heegard-decomposition.pdf}}%
  \end{picture}%
\endgroup%
}
    \subcaptionbox{The knots \(K_{1}\) (blue) and \(K_{2}\) (red) on the torus~\(T\).}[0.3\textwidth]{\def\svgscale{0.2}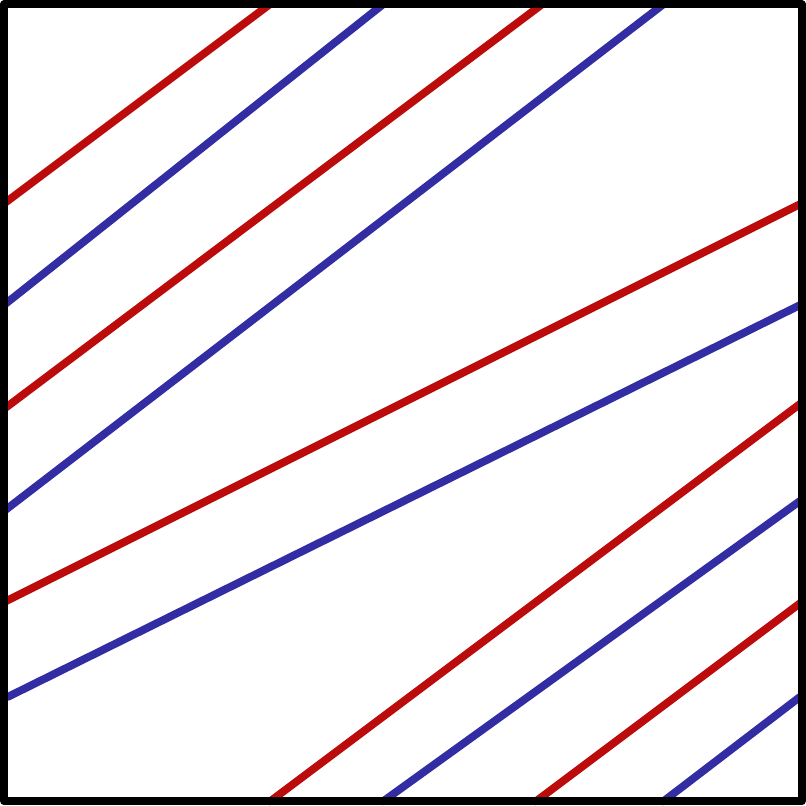}
    \subcaptionbox{The neighbourhoods \(\overline{\nu}(K_{1})\) (light blue) and \(\overline{\nu}(K_{2})\) (light red).}[0.3\textwidth]{\def\svgscale{0.2}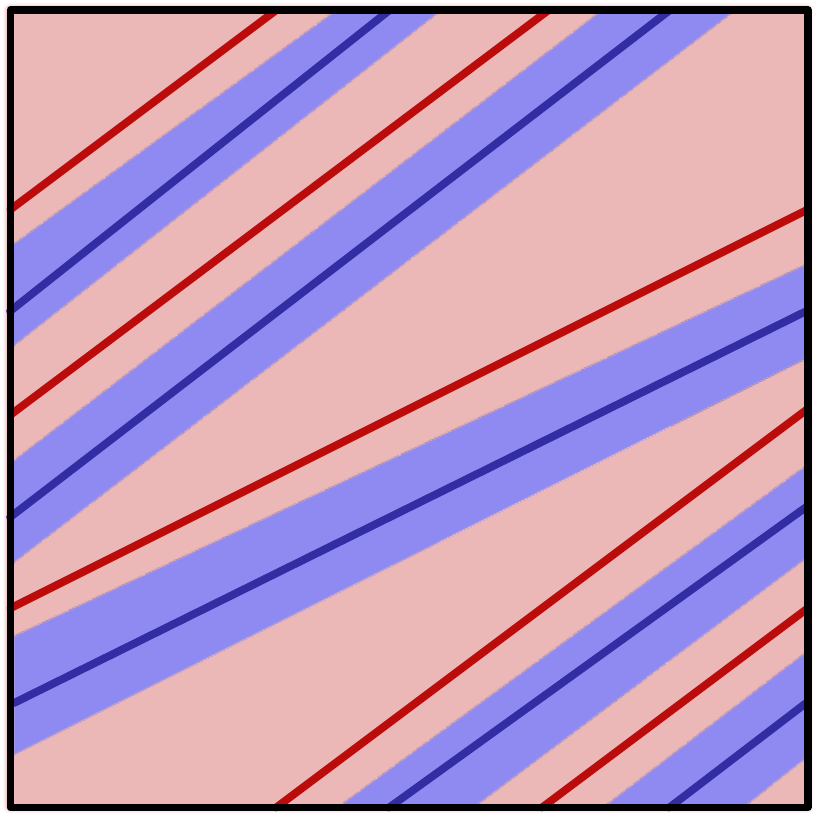}
    \caption{Left frame: the standard genus $1$ Heegaard decomposition of $S^3$. 
      Central frame: the knots $K_1$ and $K_2$ lying in the Heegaard torus $T$.
      Right frame: the neighbourhoods $\overline{\nu}(K_1)$ and $\overline{\nu}(K_2)$ of $K_1$ and $K_2$ that satisfy~$T = \overline{\nu}(K_1) \cup \overline{\nu}(K_2)$ and~$\overline{\nu}(K_1) \cap \overline{\nu}(K_2) =  \partial \overline{\nu}(K_1) = \partial \overline{\nu}(K_2)$.
}
\label{fig:HeegaardSplitting}
\end{figure}

Recall that $K_2 \subset T$ is a copy of the torus knot $T_{2,2k+1}$.
Thus when we remove~$\nu(K_2) \cong K_2 \times \mathring{D}^2$ from $S^3$, and write $V_1:=\overline{\nu}(K_1) \times I$, we obtain the following decomposition of $X_{T_{2,2k+1}}$:
  \begin{equation}
      \label{eq:presentation-torus-knot-complement}
      X_{T_{2,2k+1}} = U_{1} \cup V_{1} \cup U_{2}.
  \end{equation}
The intersection $V_1 \cap U_i$ consists a normal push off of the neighbourhood $\overline{\nu}(K_{1}) \subset T$ into $U_i$.
Put differently,  $V_1 \cap U_i$ is diffeomorphic to $\overline{\nu}(K_i')\cong K_i' \times I$, where $K_i' \subset U_i$ is a normal push-off of the knot~$K_i\subset T$.

The solid tori $U_i$ have handle decompositions with a single \(0\)-handle and a single \(1\)-handle, say \(U_{i} = h_{i}^{0} \cup h_{i}^{1}\) and we will obtain a handle decomposition for $X_{T_{2,2k+1}} $ by decomposing $V_1$ as a union of two $3$-balls $B_1$ and $B_2$ and showing that adding $V_1$ to $U_1 \sqcup U_2$ is the same as adding a~$1$-handle followed by a $2$-handle.

\begin{figure}[!htbp]
    \centering
    \def\svgscale{0.3}
\begingroup%
  \makeatletter%
  \providecommand\color[2][]{%
    \errmessage{(Inkscape) Color is used for the text in Inkscape, but the package 'color.sty' is not loaded}%
    \renewcommand\color[2][]{}%
  }%
  \providecommand\transparent[1]{%
    \errmessage{(Inkscape) Transparency is used (non-zero) for the text in Inkscape, but the package 'transparent.sty' is not loaded}%
    \renewcommand\transparent[1]{}%
  }%
  \providecommand\rotatebox[2]{#2}%
  \newcommand*\fsize{\dimexpr\f@size pt\relax}%
  \newcommand*\lineheight[1]{\fontsize{\fsize}{#1\fsize}\selectfont}%
  \ifx\svgwidth\undefined%
    \setlength{\unitlength}{557.26741208bp}%
    \ifx\svgscale\undefined%
      \relax%
    \else%
      \setlength{\unitlength}{\unitlength * \real{\svgscale}}%
    \fi%
  \else%
    \setlength{\unitlength}{\svgwidth}%
  \fi%
  \global\let\svgwidth\undefined%
  \global\let\svgscale\undefined%
  \makeatother%
  \begin{picture}(1,0.60425367)%
    \lineheight{1}%
    \setlength\tabcolsep{0pt}%
    \put(0,0){\includegraphics[width=\unitlength,page=1]{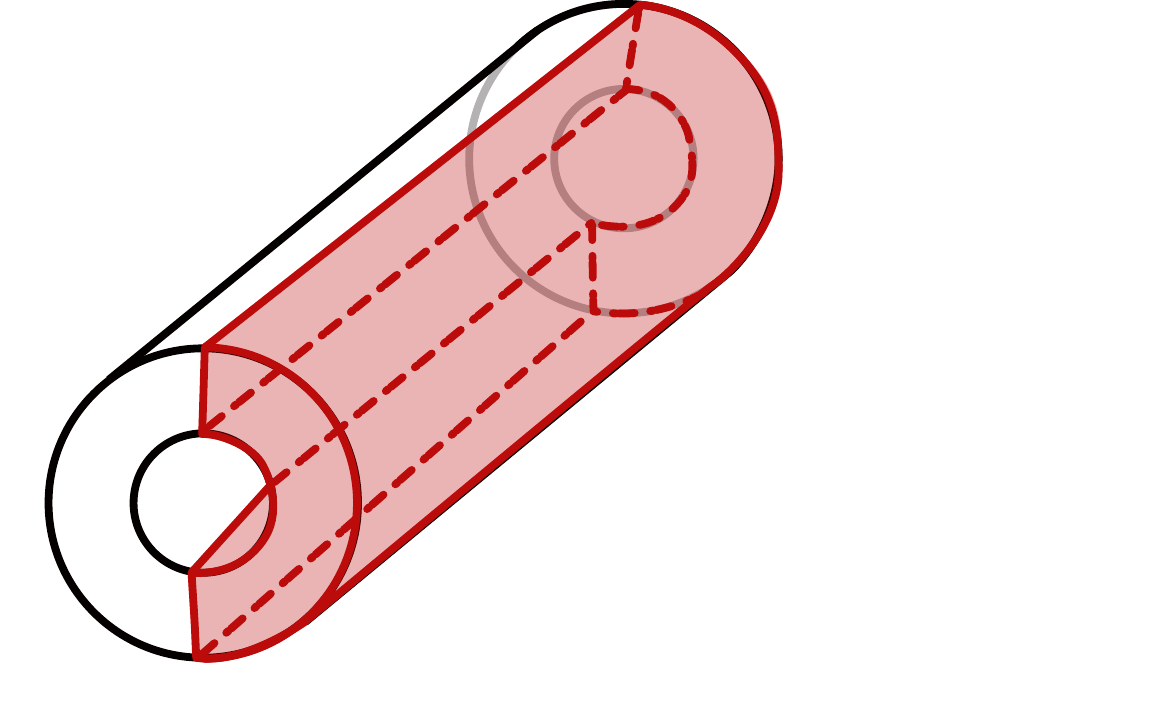}}%
    \put(0.00108825,0.24954873){\color[rgb]{0,0,0}\makebox(0,0)[lt]{\lineheight{1.25}\smash{\begin{tabular}[t]{l} \(B_{2}\)\end{tabular}}}}%
    \put(0.48800312,0.20565786){\color[rgb]{0,0,0}\makebox(0,0)[lt]{\lineheight{1.25}\smash{\begin{tabular}[t]{l} \(B_{1}\)\end{tabular}}}}%
    \put(0.33645995,0.03768914){\color[rgb]{0,0,0}\makebox(0,0)[lt]{\lineheight{1.25}\smash{\begin{tabular}[t]{l} \(\partial_{+,1} B_{1}\)\end{tabular}}}}%
    \put(0.67492558,0.31430821){\color[rgb]{0,0,0}\makebox(0,0)[lt]{\lineheight{1.25}\smash{\begin{tabular}[t]{l} \(\partial_{+,2} B_{1}\)\end{tabular}}}}%
    \put(0.73656505,0.58562569){\color[rgb]{0,0,0}\makebox(0,0)[lt]{\lineheight{1.25}\smash{\begin{tabular}[t]{l} \(\partial_{-}B_{1}\)\end{tabular}}}}%
    \put(0,0){\includegraphics[width=\unitlength,page=2]{Splitting-into-handles.pdf}}%
  \end{picture}%
\endgroup%

    \caption{The solid torus $V_1 \cong S^1 \times I^2$ and its decomposition as a union of two~$3$-balls, $B_1$ (red) and $B_2$ (the remaining part of \(V_{1}\)).
      The attaching region~$\partial_+ B_1=\partial_{+,1} B_{1} \sqcup \partial_{+,2} B_{1} $ of $B_1$, thought of as a $1$-handle, is also shown.}
\label{fig:V1}
\end{figure}

Fix a diffeomorphism \(\psi \colon S^{1} \times I^{2} \xrightarrow{\cong} V_{1}\)
 such that \(N_{1} = V_{1} \cap T = \psi(S^{1} \times I \times \{1/2\})\), and \(V_{1}   \cap U_{i} = \psi(S^{1} \times I \times \{i-1\})\).
Define \(B_{1} = \psi([0,\pi] \times I^{2}) \subset V_{1}\) and \(B_{2} = \psi([\pi,2 \pi] \times I^{2}) \subset V_{1}\).
Note that both~$B_1$ and~$B_2$ are diffeomorphic to $3$-balls and that~$V_1=B_1 \cup B_2$, as illustrated in Figure~\ref{fig:V1}.
    In what follows,  we will be thinking of $B_1$ as a $3$-dimensional $1$-handle and of $B_2$ as a $3$-dimensional $2$-handle. 
The attaching region~$\partial_+ B_1=\partial_{+,1} B_{1} \sqcup \partial_{+,2} B_{1} $ of~$B_1$ can be also be seen in Figure~\ref{fig:V1}.

\begin{itemize}
\item When $V_1$ is glued to $U_1 \sqcup U_2$,  the $1$-handle \(h_{3}^{1}=B_{1}\) is attached to the union \(U_{1} \sqcup U_{2}\) as illustrated in the top frame of Figure~\ref{fig:AttachB2}.
In more detail, we consider the aforementioned normal push offs $K_1' \subset U_1$ and $K_2' \subset U_2$ of~$K_1,K_2 \subset T$, and  the attaching regions of~$B_1$ and~$B_2$ are identified with a small $2$-disc neighbourhood of an unknotted portion of~$K_1'$ and~$K_2'$.

The effect of the handle attachment, $Z := U_{1} \cup B_{1} \cup U_{2},$ is diffeomorphic to a genus two handlebody and admits $h_{0}^{1} \cup h_{2}^{0} \cup h_{1}^{1} \cup h_{2}^{1} \cup h_{3}^{1}$ as a handle decomposition.
By performing isotopies of handle decompositions of \(U_{1}\) and \(U_{2}\), we can assume that~\(h_{3}^{1}\) cancels geometrically with either \(0\)-handle. 
  Thus, there is a handle decomposition of~\(Z\) of the  form
  \[Z = h_{1}^{0} \cup h_{2}^{0} \cup h_{1}^{1} \cup h_{2}^{1} \cup h_{3}^{1} \cong h_{1}^{0} \cup h_{1}^{1} \cup h_{1}^{2},\]
  where the right-hand side diffeomorphism results from cancelling \(h_{2}^{0}\) and \(h_{3}^{1} = B_{1}\).

\item When $V_1$ is glued to $U_1 \sqcup U_2$,  the $2$-handle \(h_r^{2}=B_{2}\) is attached to $Z$ as illustrated in the bottom frame of Figure~\ref{fig:AttachB2}.
  In more detail, the $2$-handle $B_2$ is attached to $Z$ along the curve $J \subset \partial Z$ given by the connected sum of $K_1'$ and $K_2'$ that is also illustrated in the bottom frame of Figure~\ref{fig:AttachB2}.

\begin{figure}[!htbp]
\centering
\includegraphics[scale=0.45]{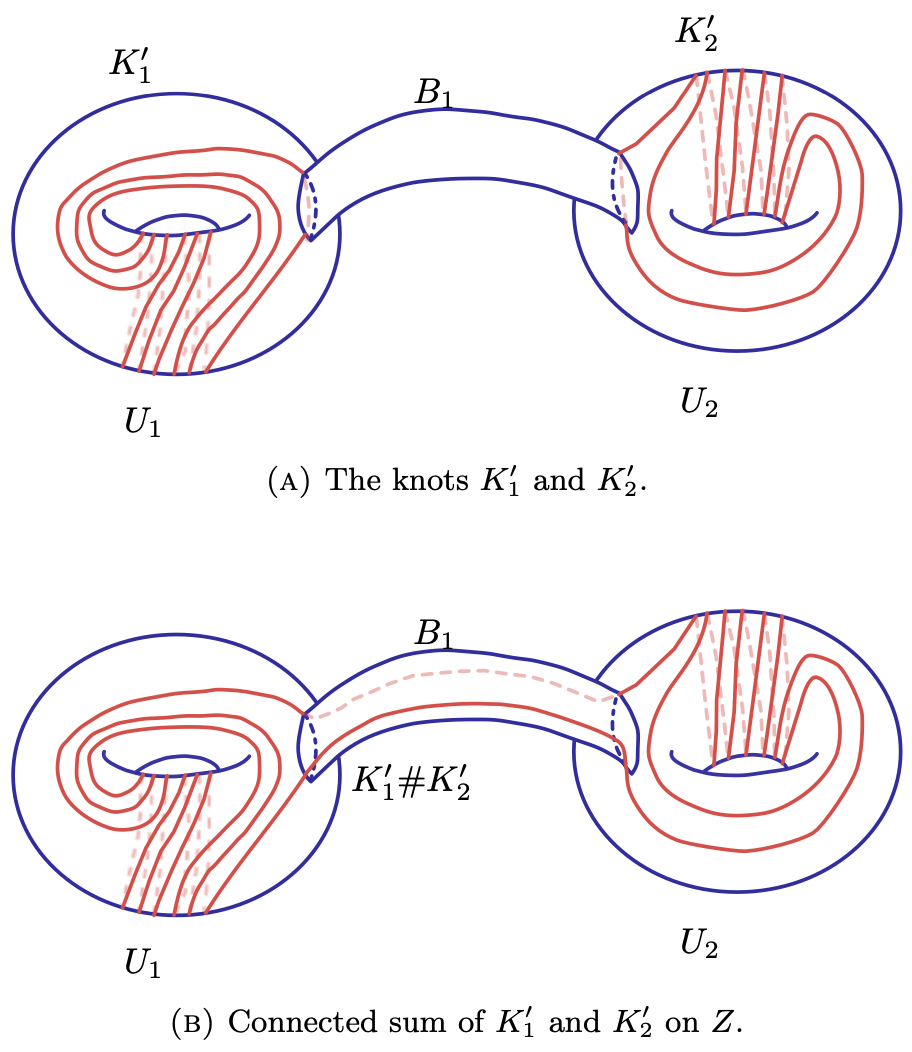}
%
%
 \label{fig:AttachingHandles}
  \caption{The space $Z$ obtained from $U_1 \sqcup U_2$ by attaching $B_1$ is diffeomorphic to a genus two handlebody.
This top figure shows the torus knots $K_1' \subset U_1$ and $K_2' \subset U_2$, expressed using a symplectic basis for $\partial U_1$ and $\partial U_2$.
Taking the connected sum of these knots (as depicted in the bottom figure) leads to the knot~$J$, which serves as the attaching circle for the attachment of the $2$-handle~$B_2$.}
\label{fig:AttachB2}
\end{figure}

\end{itemize}

As a result, we have obtained a handle decomposition of \(X_{T_{2,2k+1}}\) of the form
  \[X_{T_{2,2k+1}} = h^{0}_{1} \cup h^{0}_{2} \cup h^{1}_{1} \cup h^{1}_{2} \cup h^{1}_{3} \cup h^{2}_{r}.\]
When we cancel \(h_{2}^{0}\) with \(h_{3}^{1}\), we obtain a handle decomposition
  \begin{equation}
      \label{eq:handle-decomposition-complement}
      X_{T_{2,2k+1}} = h^{0} \cup h_{1}^{1} \cup h_{2}^{1} \cup h_{r}^{2}.
  \end{equation}
  Thus~\(X_{T_{2,2k+1}}\) can be obtained from the genus-two handlebody \(Z\) by attaching a \(2\)-handle \(h_{r}^{2}\) along a simple closed curve
  representing the element \(\alpha = a^{2k+1} c^{2} b^{2} d^{2k+1}\)
     where \(a,c,b,d\) is a standard basis of
      \(\pi_{1}(\partial Z)\),  with \(c\) and \(d\)   representing meridians of \(Z\).
  In particular, \(\pi_{1}(Z)\) is a rank-two free group generated by \(a\) and \(b\),  and \(J\), viewed as a curve in \(Z\), represents the element \(a^{2k+1}b^{2}\), as desired. 
Observe that there is no framing ambiguity in this case, see~\cite[Example 4.1.4.(c)]{Gompf-Stipsicz}. 
\end{construction}

\begin{construction}\label{construction:handle-decomp-zero-surgery}
Now that we have the handle decomposition~\eqref{eq:handle-decomposition-complement} for $X_{T_{2,2k+1}}$,  we can obtain a handle decomposition of the zero-surgery \(M_{T_{2,2k+1}}\).
Indeed, to the handle decomposition~\eqref{eq:handle-decomposition-complement} we attach a \(2\)-handle \(h_{\lambda}^{2}\), which annihilates the zero-framed longitude of \(T_{2,2k+1}\) and a~\(3\)-handle~\(h^{3}\).
  Consequently, we obtain a handle decomposition of the zero-surgery of the following form
\begin{equation}
\label{eq:handle-decomposition-zero-surgery}
M_{T_{2,2k+1}} = h^{0} \cup h_{1}^{1} \cup h_{2}^{1} \cup h_{r}^{2} \cup h_{\lambda}^{2} \cup h^{3}.
\end{equation}
By construction, this handle decomposition leads to the presentation $G_0$ of $\pi_1(M_{T_{2,2k+1}} )$.
\end{construction}

\color{black}

Now let us study identities of the presentation~\(G_{0}\) from~\eqref{eq:presentation-fund-gp-zero-surgery}.
Following the notation in Section~\ref{sub:identities-of-presentation}, let \(\mathbf{x} = \{a,b\}\) be the set of generators of~\(G_{0}\) and let \(\mathbf{r} = \{R = a^{2k+1}b^{2},\lambda = (a^{k}b)^{2k+1}a^{2k+1}(a^{k}b)^{2k+1}\}\) be the set of relations of~\(G_{0}\).
Denote by \(F\) the free group on the set~\(\mathbf{x}\) and by \(P\) the free group generated by symbols \(\rho_{r}\), for~\(r \in \mathbf{r}\).
By~\(N(G_{0})\) we denote the normal subgroup of~\(F \ast P\) generated by~\(P\).
Consider the map \(\psi \colon F \ast P \to F\) given by the formula
\[\psi(x) = x, \quad \psi(\rho_{r}) = r,\]
where~\(x \in \mathbf{x}\) and~\(r \in \mathbf{r}\).
We will denote by the same symbol the restriction of~\(\psi\) to~\(N(G_{0})\).
Recall that \(I(G_{0}) := \ker(\psi) \cap N(G_{0})\) and \([[N(G_{0}),N(G_{0})]]_{\psi}\) denote the normal subgroup of~\(N(G_{0})\) generated by Peiffer commutators, see Definition~\ref{def:CrossedModule}.

\begin{lemma}\label{lemma:identity-of-presentation}
  Set \(R = a^{2k+1}b^{2}\), \(\lambda = (a^{k}b)^{2k+1}a^{2k+1}(a^{k}b)^{2k+1}\) and \(\mu = (a^{k}b)^{-1}\).
  The following equality is satisfied in the free group~$F$ generated by~$a$ and~$b$:
  \[\mu \lambda \mu^{-1} \lambda^{-1} = \left[\mu^{-2k}a^{k}\right] R \left[\mu^{-2k}a^{k}\right]^{-1} \mu^{-2k-1} R^{-1} \mu^{2k+1}.\]
  Consequently, the presentation~\eqref{eq:presentation-fund-gp-zero-surgery} of~$\pi_{1}(M_{T_{2,2k+1}})$ admits the following identity:
  \begin{equation}\label{eq:identity-of-the-presentation}
      s = \mu \rho_{2} \mu^{-1} \rho_{2}^{-1} \mu^{-2k-1} \rho_{1} \mu^{2k+1} \left(\mu^{-2k} a^{k}\right) \rho_{1}^{-1}\left(\mu^{-2k}a^{k}\right)^{-1}.
  \end{equation}
Furthermore, \(s\) is a generator of the group of identities~\(I(G_{0}) / [[N(G_{0}),N(G_{0}))]]_{\psi}\).
\end{lemma}
\begin{proof}
  The proof of the first assertion is a direct computation:
  \begin{align*}
    \mu  \lambda  \mu^{-1} \lambda^{-1} &= (a^{k}b)^{2k}  a^{3k+1}  b  a^{-2k-1}  (a^{k}b)^{-2k-1}  \\
                        &= \left[(a^{k}b)^{2k+1} a^{k} \right]  (a^{2k+1}b^{2})  b  (b^{-2}a^{-2k-1})  (a^{k}b)^{-2k-1}  \\
                        &= \left[\mu^{-2k-1} a^{k} \right]  R  \left[\mu^{-2k-1} a^{k} \right]^{-1}  \left[\mu^{-2k-1} a^{k} \right] b  \mu^{2k+1}  \mu^{-2k-1}  R^{-1}  \mu^{2k+1}  \\
                        &= \left[\mu^{-2k}a^{k}\right] R \left[\mu^{-2k}a^{k}\right]^{-1} \mu^{-2k-1} R^{-1} \mu^{2k+1}.
  \end{align*}
  Consequently, \(s\) is indeed an identity of the presentation~\(G_{0}\).

To prove the last assertion, observe that the handle decomposition from Construction~\ref{construction:handle-decomp-zero-surgery} admits a single \(3\)-handle.
    Therefore, by Proposition~\ref{prop:generating-identities}, \(G_{0}\) admits a single identity, let us denote it by~\(s_{1}\), which is unique up to conjugation by elements of~\(F\) and up to Peiffer commutators.
    Furthermore, every element of~\(I(G_{0}) / [[N(G_{0}),N(G_{0}))]]_{\psi}\) can be written, modulo Peiffer commutators, as a product of conjugates of~\(s_{1}\) by elements of~\(F\).
\end{proof}

\begin{remark}
We came up with the identity $s$ in~\eqref{eq:identity-of-the-presentation} as follows.
  Since~\(\mu\) and~\(\lambda\) commute in~$\pi_1(X_{T_{2,2k+1}})$, the commutator \(\mu \lambda \mu^{-1} \lambda^{-1}\) is expressible as a product of conjugates of the relation~\(r\) and its inverse (cf.\ the first equation in Lemma~\ref{lemma:identity-of-presentation}).
Identities are typically obtained by combining two relations in $\pi_1$, so we then combined the aforementioned relation (specifically, using $r$) with the relation~\(\lambda=1\).
\end{remark}

Having fixed the handle decomposition of \(M_{T_{2,2k+1}}\), we can describe the \purple{handle} chain
complex~\(C_{\ast}^{\text{hnd}}(\widetilde{M}_{T_{2,2k+1}})\) as well as the component of the chain
homotopy equivalence~\(\Phi \colon C^{2}_{\text{hnd}}(\widetilde{M}_{T_{2,2k+1}}) \to C_{1}^{\text{hnd}}(\widetilde{M}_{T_{2,2k+1}})\).
For a quick review of the handle chain complex, refer to Section~\ref{sub:handle-chain-complex}.

\begin{proposition}\label{prop:cellular-chain-complex}
  The handle chain complex for the universal cover of the zero-surgery \(M_{T_{2,2k+1}}\) is given by the~$\Z[G_0]$-module chain complex~$\Z[G_{0}] \xrightarrow{\partial_{3}} \Z[G_{0}]^{2} \xrightarrow{\partial_{2}} \Z[G_{0}]^{2} \xrightarrow{\partial_{1}} \Z[G_{0}],$ where the differentials are described by the following formulas:
  \begin{align*}
    \partial_{3} &=
         \begin{pmatrix}
           \mu^{-2k-1}-\mu^{-2k} \cdot a^{k} & \mu-1 \\
         \end{pmatrix}, \\
    \partial_{2} &= 
         \begin{pmatrix}
           \frac{\partial (a^{2k+1}b^{2})}{\partial a} & \frac{\partial (a^{2k+1}b^{2})}{\partial b} \\
           \frac{\partial \lambda_{0}}{\partial a}    & \frac{\partial \lambda_{0}}{\partial b} \\
         \end{pmatrix}, \\
    \partial_{1} &=
         \begin{pmatrix}
           a-1 & b-1
         \end{pmatrix}^{T}.
  \end{align*}
  Moreover, if \(\Phi \colon C^{2}_{\text{hnd}}(\widetilde{M}_{T_{2,2k+1}}) \to C_{1}^{\text{hnd}}(\widetilde{M}_{T_{2,2k+1}})\) denotes the chain map 
mentioned in~\eqref{eq:SymmetricStructure},   then
  \begin{equation}
    \label{eq:Phi}
    \Phi =
    \begin{pmatrix}
      \mu^{2k+1} \cdot \frac{\partial (\mu^{-2k-1})}{\partial a} - a^{-k} \cdot \mu^{2k} \cdot \frac{\partial (\mu^{-2k} \cdot a^{k})}{\partial a} & \mu^{2k+1} \cdot \frac{\partial (\mu^{-2k-1})}{\partial b} - a^{-k} \cdot \mu^{2k} \cdot \frac{\partial (\mu^{-2k} \cdot a^{k})}{\partial b} \\
      \mu^{-1} \cdot \frac{\partial \mu}{\partial a} & \mu^{-1} \cdot \frac{\partial \mu}{\partial b} \\
    \end{pmatrix}.
  \end{equation}
\end{proposition}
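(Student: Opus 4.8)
The plan is to build an explicit CW structure on $M_{T_{2,2k+1}}$ from the presentation of $G_0$ in Lemma~\ref{lem:Presentation0Surgery}, and then to read off all three differentials and the duality map by Fox calculus. First I would recall that the exterior $X_K = S^3 \setminus \mathcal{N}(T_{2,2k+1})$ is aspherical and is homotopy equivalent to the presentation $2$-complex of $G = \langle a,b \mid a^{2k+1}b^2\rangle$, which has a single $0$-cell, two $1$-cells (labelled $a,b$) and a single $2$-cell (attached along $r = a^{2k+1}b^2$). Passing to the zero-surgery via $M_{T_{2,2k+1}} = X_K \cup_\partial (S^1\times D^2)$ glues in the zero-framed solid torus, which geometrically contributes one further $2$-cell, attached along the zero-framed longitude $\lambda_0$, and one $3$-cell. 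This produces exactly the claimed cell structure $\Z[G_0] \to \Z[G_0]^2 \to \Z[G_0]^2 \to \Z[G_0]$, so that it only remains to identify $\partial_1,\partial_2,\partial_3$ and the map $\Phi$.

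The maps $\partial_1$ and $\partial_2$ are the standard differentials of a presentation complex. For $\partial_1$ each $1$-cell has boundary $(x_i-1)$ times the unique $0$-cell, giving $\partial_1 = (a-1,\,b-1)^T$; for $\partial_2$ the boundary of the $2$-cell attached along a relator is the corresponding row of Fox derivatives, yielding the stated matrix with rows indexed by $r$ and $\lambda_0$. The only non-routine differential is $\partial_3$: by the standard recipe, the attaching map of the top cell is recorded by an identity of the presentation, and the boundary of the $3$-cell expressed in the $2$-cells is the vector of Fox derivatives of that identity with respect to the relator letters $\rho_1,\rho_2$ (see \cite{Powell}). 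I would therefore take the identity $R$ of Lemma~\ref{lemma:identity-of-presentation} and compute $\partial R/\partial\rho_1$ and $\partial R/\partial\rho_2$; since $\rho_1,\rho_2$ map to $1$ in $G_0$, every prefactor collapses to the image of its $\mu,a,b$-part, and a short computation gives $\partial R/\partial\rho_1 = \mu^{-2k-1}-\mu^{-2k}a^k$ and $\partial R/\partial\rho_2 = \mu-1$, matching $\partial_3$.

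For the Poincaré duality map $\Phi\colon C^2 \to C_1$ I would apply the explicit formula of \cite[Equation following Theorem 3.15]{MillerPowell} to the same identity $R$. Writing $R$ as the product of conjugates $w_\ell\rho_{j_\ell}^{\epsilon_\ell}w_\ell^{-1}$ read off from \eqref{eq:identity-of-the-presentation} — namely $(w_1,\epsilon_1)=(\mu,+1)$ and $(w_2,\epsilon_2)=(1,-1)$ for $\rho_2$, and $(w_3,\epsilon_3)=(\mu^{-2k-1},+1)$ and $(w_4,\epsilon_4)=(\mu^{-2k}a^k,-1)$ for $\rho_1$ — the formula contributes a term of the form $\epsilon_\ell\,w_\ell^{-1}(\partial w_\ell/\partial x_i)$ to the entry indexed by the relator $\rho_{j_\ell}$ and the generator $x_i$. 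Assembling these, the $\rho_1$-row becomes $\mu^{2k+1}\,\partial(\mu^{-2k-1})/\partial x_i - a^{-k}\mu^{2k}\,\partial(\mu^{-2k}a^k)/\partial x_i$ and the $\rho_2$-row becomes $\mu^{-1}\,\partial\mu/\partial x_i$ (the $w_2=1$ term dropping out since $\partial(1)/\partial x_i = 0$), which is precisely \eqref{eq:Phi}.

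Finally, as a consistency check I would verify $\partial_1\partial_2 = 0$, immediate from the fundamental formula $\sum_i(\partial r/\partial x_i)(x_i-1)=r-1=0$ of Fox calculus, and $\partial_2\partial_3=0$, which follows from $R$ being an identity of the presentation. The main obstacle I anticipate is bookkeeping rather than conceptual: one must match the sign and left/right conventions of the Miller–Powell duality formula exactly — which conjugating word enters, in which variable the Fox derivative is taken, and where $\epsilon_\ell$ and $w_\ell^{-1}$ sit — so that the four conjugate blocks of $R$ assemble into \eqref{eq:Phi} rather than a transposed or sign-flipped variant. A secondary point requiring care is confirming that the identity of Lemma~\ref{lemma:identity-of-presentation} is genuinely the attaching datum of the $3$-cell of the chosen CW structure, i.e. that the geometric gluing of the zero-framed solid torus is faithfully encoded by $R$.
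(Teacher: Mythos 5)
Your proposal is correct and follows essentially the same route as the paper's (very terse) proof: $\partial_1,\partial_2$ as the standard Fox-calculus differentials of the presentation complex, $\partial_3$ as the vector of Fox derivatives $\bigl(\frac{\partial R}{\partial \rho_1}, \frac{\partial R}{\partial \rho_2}\bigr)$ of the identity $R$ from Lemma~\ref{lemma:identity-of-presentation} (the paper cites~\cite{TrotterSystem} for this, and in fact contains a typo repeating $\frac{\partial R}{\partial \rho_1}$ twice, which your computation silently corrects), and $\Phi$ via the formula following Theorem~3.15 of~\cite{MillerPowell} applied to the conjugate decomposition of $R$. Your version merely fills in details the paper leaves implicit — the CW structure coming from gluing the surgery solid torus, the explicit evaluation of the $\rho_j$-derivatives, and the assembly of the four conjugate blocks into \eqref{eq:Phi} — all of which check out.
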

\begin{proof}
Endow $M_{T_{2,2k+1}}$ with  the handle decomposition from~\eqref{eq:handle-decomposition-zero-surgery}.
This handle decomposition admits a single $3$-handle.
      By construction, this handle decomposition induces the presentation
$$ \pi_1(M_{T_{2,2k+1}}) \cong \langle a, b \mid a^{2k+1}b^{2}, (a^{k}b)^{2k+1} a^{2k+1} (a^{k}b)^{2k+1}\rangle $$
and the identity 
$$ R = \mu \rho_{2} \mu^{-1} \rho_{2}^{-1} \mu^{-2k-1} \rho_{1} \mu^{2k+1} \left(\mu^{-2k} a^{k}\right) \rho_{1}^{-1}\left(\mu^{-2k}a^{k}\right)^{-1}.$$
Since~\(M_{T_{2,2k+1}}\) is aspherical~\cite[Corollary 5]{Gabai}, the result now follows from Proposition~\ref{prop:Summary}.
\end{proof}

In later computations, we will need the explicit formulas for the~$\partial_i$ and, for this reason, we record the following computations which only require Fox calculus: 
\begin{align*}
  \frac{\partial (a^{2k+1}b^{2})}{\partial a} &= 1+a+a^{2}+\cdots+a^{2k}, \\
  \frac{\partial (a^{2k+1}b^{2})}{\partial b} &= a^{2k+1}(1+b), \\
  \frac{\partial \lambda_{0}}{\partial a} &= (1 + (a^{k}b)^{2k+1}   a^{2k+1})   \frac{\partial (a^{k}b)^{2k+1}}{\partial a} + (a^{k}b)^{2k+1} (1+a+a^{2}+\cdots+a^{2k}), \\
  \frac{\partial \lambda_{0}}{\partial b} &= (1 + (a^{k}b)^{2k+1}  a^{2k+1})  \frac{\partial (a^{k}b)^{2k+1}}{\partial b}, \\
  \frac{\partial (a^{k}b)^{2k+1}}{\partial a} &= (1+a^{k}b+(a^{k}b)^{2}+\cdots+(a^{k}b)^{2k})  (1+a+a^{2}+\cdots+a^{k-1}), \\
  \frac{\partial (a^{k}b)^{2k+1}}{\partial b} &= (1+a^{k}b+(a^{k}b)^{2}+\cdots+(a^{k}b)^{2k})  a^{k}.
\end{align*}

\subsection{Blanchfield forms twisted by dihedral representations}
\label{sub:blanchf-forms-twist}
In this subsection, we use Powell's algorithm reviewed in Subsection~\ref{sub:review_powell} to compute metabelian Blanchfield pairings of~$T_{2,2k+1}$.
\medbreak

Use \(\Sigma_{2}(T_{2,2k+1})\) to denote the double cover of \(S^{3}\) branched along \(T_{2,2k+1}\) and let \(\xi = \xi_{2k+1}\) be a primitive root of unity of order \(2k+1\).
Recall from Subsection~\ref{sub:review_metabelian} that for every character~$\chi \colon H_1(\Sigma_{2}(T_{2,2k+1});\Z) \to \Z_{2k+1}$, there is a metabelian representation
\[ \alpha(2,\chi) \colon \pi_1(M_{T_{2,2k+1}}) \to GL_2(\C[t^{\pm 1}]).\]
The representation~$\alpha(2,\chi)$ can be described quite explicitly.
                                                               %
We start by producing generators of the Alexander module of~$T_{2,2k+1}$.
Since the abelianisation map 
$$\operatorname{Ab} \colon G_{0} \cong \pi_1(M_{T_{2,2k+1}})\to \Z$$
sends~$x_{2k}$ and~$x_{2k+1}$ to~$1$, the commutator subgroup \(G_{0}^{(1)} = [G_{0},G_{0}]\) consists of words in \(x_{2k}, x_{2k+1}\) such that the sum of the exponents is zero.
In particular, \(x_{2k}x_{2k+1}^{-1} \in G_{0}^{(1)}\) and it is easy to check that~\(G_{0}^{(1)}\) is normally generated by \(x_{2k} x_{2k+1}^{-1}\).
Therefore, the image of \(x_{2k} x_{2k+1}^{-1}\) generates the Alexander module \(H_{1}(M_{K};\Z[t^{\pm1}]) = G_{0}^{(1)} / G_{0}^{(2)}\) as a \(\Z[t^{\pm1}]\)-module.
Consider the projection
\[q \colon G_{0}^{(1)} \to H_{1}(M_{K};\Z[t^{\pm1}]) \stackrel{}{\to} H_{1}(\Sigma_{2}(T_{2,2k+1});\Z). \]
Since the image of~$x_{2k}x_{2k+1}^{-1}$ generates the Alexander module, it follows that~$q(x_{2k} x_{2k+1}^{-1})$ generates~$H_{1}(\Sigma_{2}(T_{2,2k+1});\Z)$.
This, in particular, recovers the well-known fact that~$H_1(\Sigma_2(T_{2,2k+1});\Z)$ is cyclic, and -- in fact --~$H_1(\Sigma_2(T_{2,2k+1});\Z)\cong\Z_{2k+1}$. 
As a consequence, the~$2k+1$ characters~$H_{1}(\Sigma_{2}(T_{2,2k+1});\Z) \to \Z_{2k+1}$ can be described by imposing that 
\[\chi_{\theta} \colon H_{1}(\Sigma_{2}(T_{2,2k+1});\Z) \to \Z_{2k+1}\]
satisfies \(\chi_{\theta}(q(x_{2k} x_{2k+1}^{-1})) = \theta\) for~$\theta=0,\ldots,2k$.
We shall now use these observations to compute the value of the metabelian representation~$\rho_\theta:=\alpha(2,\chi_{\theta})$ on the generators~$a$ and~$b$ of~$G_0$.
Using~\eqref{eq:Matrix}, we have~$\rho_{\theta}(x_{2k} x_{2k+1}^{-1}) =
\bsm   \xi^{\theta} & 0 \\
0           & \xi^{-\theta} \\
\esm$ and~$ \rho_{\theta}(x_{2k+1}) =\bsm 0 & 1 \\ t & 0 \\ \esm~$, and we therefore obtain\[\rho_{\theta}(x_{2k}) =
  \begin{pmatrix}
    \xi^{\theta} & 0 \\
    0           & \xi^{-\theta} \\
  \end{pmatrix}
  \begin{pmatrix}
    0 & 1 \\
    t & 0 \\
  \end{pmatrix}
  =
  \begin{pmatrix}
    0 & \xi^{\theta} \\
    t \xi^{-\theta} & 0 \\
  \end{pmatrix}.
\]
Next, recall from Lemma~\ref{lem:PresentationTorusKnot} that the generators~$a$ and~$b$ of~$G_0$ are related to the generators~$x_{2k}$ and~$x_{2k+1}$ by the formulas~$a=\phi(x_{2k}x_{2k+1})$ and~$b=\phi(x_{2k+1}^{-1}x_{2k}^{-1})^kx_{2k+1}^{-1}$.
As a consequence, the metabelian representation~$\rho_\theta$ is entirely described by
\begin{align*}
  \rho_\theta(a) &=
           \begin{pmatrix}
             t\xi^{-\theta} & 0 \\
             0 & t \xi^{\theta} \\
           \end{pmatrix}, \\
  \rho_\theta(b) &=
           \begin{pmatrix}
             0 & t^{-k-1} \xi^{k \cdot \theta} \\
             t^{-k} \xi^{-k \cdot \theta} & 0 \\
           \end{pmatrix}.
\end{align*}
We can now work towards the description of the module~$H^{2}(M_{T_{2,2k+1}};\C[t^{\pm 1}]_{\rho_{\theta}}^{2})$ which supports the cohomological Blanchfield pairing.
Consider the polynomials~$P_{k}(t) = 1+t+t^{2}+\cdots+t^{k}$ and~$R_\eta(t)=t+t^{-1}-2 \operatorname{Re}(\eta)$, for~$\eta \in S^1$.
The latter  polynomial is the basic polynomial from Subsection~\ref{sub:review_metabelian}.
Furthermore, we shall also need the following symmetric polynomial:
\[\Delta_{\theta}(t) = t^{-k} \frac{P_{2k}(t)}{\basicR_{\xi^{\theta}}(t)} = \prod_{\stackrel{i=1}{i \neq \theta}}^{k} \basicR_{\xi^{i}}(t).\]
The next lemma describes the cohomology~$\C[t^{\pm 1}]$-module~$H^{2}(M_{T_{2,2k+1}};\C[t^{\pm 1}]_{\rho_{\theta}}^{2})$.

\begin{proposition}
  \label{prop:TwistedModuleComputation}
  The module~$H^{2}(M_{T_{2,2k+1}};\C[t^{\pm 1}]_{\rho_{\theta}}^{2})$ is isomorphic to~$ \C[t^{\pm 1}] / (\Delta_{\theta}(t))$ and admits a generator \([v_2] \in H^{2}(M_{T_{2,2k+1}};\C[t^{\pm 1}]_{\rho_{\theta}}^{2})\) such that
  \[\Bl^{\rho_{\theta}}(T_{2,2k+1})([v_2],[v_2]) = \frac{\frac{1}{2}(t^{2k+1} \xi^{k\theta} - t^{k+1} -t^{k}+\xi^{(k+1)\theta})(t^{-2k-1}+1)}{t^{-k}P_{2k}(t)}.\]
\end{proposition}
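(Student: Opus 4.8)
The plan is to compute the cohomology module and the pairing directly from the chain-complex data assembled in Proposition~\ref{prop:cellular-chain-complex}, feeding everything through Powell's algorithm from Subsection~\ref{sub:review_powell}. First I would substitute the explicit matrices $\rho_\theta(a)$ and $\rho_\theta(b)$ into the Fox-derivative formulas for $\partial_2$ and $\partial_3$, producing honest $2\times 2$-blocked matrices over $\LC$. Because $\rho_\theta(a)$ is diagonal while $\rho_\theta(b)$ is antidiagonal, the twisted boundary maps will split into pieces indexed by the two characters $\xi^{\pm\theta}$, and I expect the computation of $\partial_2(\rho_\theta)$ to reduce to evaluating geometric-series Fox derivatives like $\frac{\partial(a^kb)^{2k+1}}{\partial a}$ at these diagonal/antidiagonal substitutions. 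The recurring factor $P_{2k}(t)=1+t+\dots+t^{2k}$ should emerge from $\frac{\partial(a^{2k+1}b^2)}{\partial a}$, and its factorization $t^{-k}P_{2k}(t)=\prod_{i=1}^k\basicR_{\xi^i}(t)$ is exactly what lets $\basicR_{\xi^\theta}(t)$ be cancelled to leave $\Delta_\theta(t)$.

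Next I would identify $H^2(M_{T_{2,2k+1}};\LC^2_{\rho_\theta})$ as the cokernel $\coker\beta(\partial^2)=\coker\bigl((-1)^2\partial_2(\rho_\theta)^{\#T}\bigr)$, using acyclicity (noted in the section footnote, so no prime-power hypothesis is needed) to guarantee $H^2$ is torsion. The goal is to show this cokernel is cyclic of order $\Delta_\theta(t)$; I would do this by row/column-reducing the blocked presentation matrix over the PID $\LC$ to Smith normal form, expecting one invariant factor to be a unit and the other to be $\doteq\Delta_\theta(t)$. This simultaneously produces an explicit generator $[v_2]$, which I would track as a specific row vector $v_2\in C^2(M_{T_{2,2k+1}};\LC^2_{\rho_\theta})$ so that its class survives the reduction. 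The comparison of orders of vanishing at each $\xi^i$, $i\neq\theta$, against the order of $P_{2k}(t)$ pins down that the module is $\LC/(\Delta_\theta(t))$.

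For the pairing value, I would apply Definition~\ref{def:CohomBlanchfield}: solve $Z\beta(\partial^2)=s\,v_2$ for some $Z\in C^1$ and $s\in\LC\setminus\{0\}$, then evaluate $\tfrac1s\bigl(v_2\cdot\beta(\Phi)\cdot Z^{\#T}\bigr)^{\#T}$ using the explicit $\Phi$ from~\eqref{eq:Phi}. Since $\mu=(a^kb)^{-1}$ and the powers $\mu^{\pm(2k+1)}$ govern $\Phi$, the Fox derivatives $\frac{\partial\mu^{-2k-1}}{\partial a}$, $\frac{\partial(\mu^{-2k}a^k)}{\partial a}$ etc.\ must be computed, substituted via $\rho_\theta$, and simplified; I expect the diagonal structure of $\rho_\theta(a)$ versus the off-diagonal $\rho_\theta(b)$ to again split the contributions, so that only the appropriate matrix entry feeds the final scalar in $\OC/\LC$. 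Matching the result to the asserted fraction requires reducing modulo $\LC$ and clearing the common denominator down to $t^{-k}P_{2k}(t)$.

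The main obstacle will be the pairing computation in the last paragraph rather than the module identification: choosing $Z$ and $s$ correctly and then carrying the $(-)^{\#T}$ involution through the product $v_2\cdot\beta(\Phi)\cdot Z^{\#T}$ without sign or conjugation errors is delicate, especially because the involution sends $t\mapsto t^{-1}$ and conjugates the roots of unity $\xi^{\pm\theta}$, so the numerator $\tfrac12(t^{2k+1}\xi^{k\theta}-t^{k+1}-t^k+\xi^{(k+1)\theta})(t^{-2k-1}+1)$ will only assemble correctly if the Hermitian symmetry is tracked consistently. I would verify the answer is well-defined modulo $\LC$ (independent of the choice of $Z$) and sesquilinear-Hermitian as a sanity check, and confirm that its reduction is supported exactly on the roots of $\Delta_\theta(t)$, consistent with the module computation.
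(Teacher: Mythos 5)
Your overall route---substitute $\rho_\theta$ into the Fox-calculus matrices of Proposition~\ref{prop:cellular-chain-complex}, pin down the module, then feed a generator, a lift $Z$ and a scalar $s$ into Powell's formula~\eqref{eq:ChainBlanchfieldFormula} and symmetrise---is the paper's, but the module step contains a concrete error. You identify $H^{2}(M_{T_{2,2k+1}};\C[t^{\pm 1}]^{2}_{\rho_{\theta}})$ with $\coker\beta(\partial^2)$ and expect its Smith normal form to exhibit a cyclic torsion module of order $\Delta_\theta(t)$. That step fails: the twisted cochain complex has ranks $2,4,4,2$ and is acyclic over $\C(t)$, so $\beta(\partial^2)\colon C^1\to C^2$ has rank $2$, not $4$; its Smith normal form is $\operatorname{diag}(1,\Delta_\theta,0,0)$ up to units, and the cokernel is $\C[t^{\pm 1}]^{2}\oplus\C[t^{\pm 1}]/(\Delta_\theta(t))$, which is neither torsion nor cyclic. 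The correct object is $H^2=\ker\beta(\partial^3)/\im\beta(\partial^2)$; equivalently, since $\coker\beta(\partial^2)/H^2$ embeds in the free module $C^3$, $H^2$ is the \emph{torsion submodule} of your cokernel. With that repair your plan goes through, and a torsion coset automatically admits cocycle representatives (if $\Delta_\theta\cdot v\in\im\beta(\partial^2)\subseteq\ker\beta(\partial^3)$, then $\beta(\partial^3)v=0$ because $C^3$ is free), which you need before \eqref{eq:ChainBlanchfieldFormula} even applies to $[v_2]$.

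The paper organises this step in the opposite, computationally cheaper order: it first computes $\ker\rho_\theta(\partial_3)^{\# T}$, free of rank $2$ with explicit basis $v_1,v_2$, then produces a basis $Z_1,\dots,Z_4$ of $C^1$ with $Z_4\mapsto v_1$, $Z_2,Z_3\mapsto 0$, and $Z_1\mapsto t^{-k+1}\Delta_\theta(t^{-1})\,v_2$ under $\rho_\theta(\partial_2)^{\# T}$; cyclicity with order $\Delta_\theta(t)$ is then read off directly, and---crucially---the pair $(Z,s)=(Z_1,\,t^{-k+1}\Delta_\theta(t))$ demanded by Powell's formula comes out for free, whereas in your cokernel route you must still solve the linear system $Z\beta(\partial^2)=s\,v_2$ afterwards. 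Your remaining steps (carrying the $\#T$-involution through $v_2\cdot\rho_\theta(\Phi)\cdot Z^{\# T}$, symmetrising the numerator via $a\mapsto\tfrac12(a+a^{\#})$ over the symmetric denominator, and clearing to $t^{-k}P_{2k}(t)$) match the paper. One small circularity to fix: acyclicity of $\rho_\theta$ is not available as an input from Section~\ref{sec:MetabelianBlanchfield}, since $2k+1$ need not be a prime power; in the paper it is a by-product of the computation ($\Delta_\theta\neq 0$), so you should conclude it rather than invoke it before the reduction.
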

\begin{proof}
  Since~$H^{2}(M_{T_{2,2k+1}};\C[t^{\pm 1}]_{\rho_{\theta}}^{2})$ is equal to the quotient~$\ker({{\partial_3}^{\#T}})/\im({{\partial_2}^{\# T}})$, we first compute~$\ker({{\partial_3}^{\#T}})$, before studying~$\im({{\partial_2}^{\#T}})$.
  First of all, note that~$\rho_{\theta}(\mu) = \rho_{\theta}(x_{2k+1}) = \bsm  0 & 1 \\ t & 0 \esm~$.
  Using this computation and looking back to the definition of~$\partial_3$ (Proposition~\ref{prop:cellular-chain-complex}), we deduce that
  \[\rho_{\theta}(\partial_{3}) =
    \begin{pmatrix}
      -\xi^{-k\theta} & t^{-k-1}     & -1 & 1 \\
      t^{-k}         & -\xi^{k\theta} & t & -1 \\
    \end{pmatrix}.
  \]
  The differential \(\rho_{\theta}(\partial_{3})^{\#T}\) has rank two, hence \(\ker (\rho_{\theta}(\partial_{3})^{\#T})\) is~$2$-dimensional.
  In fact, we claim that the kernel of \(\rho_{\theta}(\partial_{3})^{\# T}\) is freely generated by
  \begin{align*}
    v_{1} &= \left(t^{-k}\xi^{-k\theta}, t^{-2k-1}, \xi^{-k\theta} P_{2k}(t^{-1}), \xi^{-k\theta}P_{2k}(t^{-1})\right), \\
    v_{2} &= \left(0, t^{-1}-1, \xi^{(k+1)\theta}-t^{k+1}, \xi^{-k\theta}-t^{k}\right).
  \end{align*}
  Indeed, it is easy to check that \(v_{1} \cdot \rho_{\theta}(\partial_{3})^{\# T} = v_{2} \cdot \rho_{\theta}(\partial_{3})^{\# T} = 0\) and that \(v_{1}\) and \(v_{2}\) are linearly independent.
  We now turn to \( \im(\rho_{\theta}(\partial_{2})^{\# T}) \).
  We first compute \(\rho_{\theta}(\partial_{2})^{\# T}\), then find a basis for~\(C^{1}(M_{T_{2,2k+1}};\C[t^{\pm 1}]_{\rho_{\theta}}^{2})\) and finally compute the image.
  To obtain the first line of~$\rho_\theta(\partial_2)$, we compute the Fox derivatives of \(r=a^{2k+1}b^2\):
  \begin{align*}
    \rho_{\theta}\left(\frac{\partial r}{\partial a}\right) &=
                                        \begin{pmatrix}
                                          P_{2k}(t\xi^{-\theta}) & 0 \\
                                          0 & P_{2k}(t\xi^{\theta}) \\
                                        \end{pmatrix}, \\
    \rho_{\theta}\left(\frac{\partial r}{\partial b}\right) &=
                                        \begin{pmatrix}
                                          t^{2k+1} & t^{k} \xi^{k\theta} \\
                                          t^{k+1} \xi^{-k\theta} & t^{2k+1} \\ 
                                        \end{pmatrix}.
  \end{align*}
  In order to obtain the second line of~$\rho_\theta(\partial_{2})$, we first compute
  \begin{align*}
    \rho_{\theta}\left(\frac{\partial (a^{k}b)}{\partial a}\right) &=
                                            \begin{pmatrix}
                                              P_{k-1}(t\xi^{-\theta}) & 0 \\
                                              0& P_{k-1}(t\xi^{\theta}) \\
                                            \end{pmatrix}, \\
    \rho_{\theta}\left(\frac{\partial (a^{k}b)}{\partial b}\right) &=
                                            \begin{pmatrix}
                                              t^{k}\xi^{-k\theta} & 0 \\
                                              0 & t^{k}\xi^{k\theta} \\
                                            \end{pmatrix}, \\
    \rho_{\theta}\left(\frac{\partial (a^{k}b)^{2k+1}}{\partial a}\right) &=
                                                   \begin{pmatrix}
                                                     P_{k}(t^{-1}) & t^{-1}P_{k-1}(t^{-1}) \\
                                                     P_{k-1}(t^{-1}) & P_{k}(t^{-1})
                                                   \end{pmatrix} \cdot
                                                                       \begin{pmatrix}
                                                                         P_{k-1}(t\xi^{-\theta}) & 0 \\
                                                                         0   & P_{k-1}(t\xi^{\theta}) \\
                                                                       \end{pmatrix}  \\
                                          &=
                                            \begin{pmatrix}
                                              P_{k}(t^{-1})P_{k-1}(t\xi^{-\theta}) & t^{-1}P_{k-1}(t^{-1})P_{k-1}(t\xi^{\theta}) \\
                                              P_{k-1}(t^{-1})P_{k-1}(t\xi^{-\theta}) & P_{k}(t^{-1})P_{k-1}(t\xi^{\theta}) \\
                                            \end{pmatrix}, \\
    \rho\left(\frac{\partial (a^{k}b)^{2k+1}}{\partial b}\right) &=
                                               \begin{pmatrix}
                                                 P_{k}(t^{-1}) & t^{-1}P_{k-1}(t^{-1}) \\
                                                 P_{k-1}(t^{-1}) & P_{k}(t^{-1})
                                               \end{pmatrix} \cdot
                                                                   \begin{pmatrix}
                                                                     t^{k}\xi^{-k\theta} & 0 \\
                                                                     0 & t^{k} \xi^{k\theta} \\
                                                                   \end{pmatrix}  \\
                                          &=
                                            \begin{pmatrix}
                                              P_{k}(t)\xi^{-k\theta }& P_{k-1}(t)\xi^{k\theta} \\
                                              t P_{k-1}(t) \xi^{-k\theta} & P_{k}(t) \xi^{k\theta} \\
                                            \end{pmatrix}.
  \end{align*}
  As a consequence, the second line of~$\partial_2$ is given by
  \begin{align*}
    \rho_{\theta}\left(\frac{\partial \lambda_{0}}{\partial a}\right) &=
                                            \begin{pmatrix}
                                              t^{-k}P_{k-1}(t\xi^{-\theta})P_{2k}(t)  & t^{-k}P_{k-1}(t\xi^{\theta})P_{2k}(t) + t^{-k-1}P_{2k}(t\xi^{\theta}) \\
                                              t^{-k+1}P_{k-1}(t\xi^{-\theta})P_{2k}(t) + t^{-k}P_{2k}(t\xi^{-\theta}) & t^{-k}P_{k-1}(t\xi^{\theta})P_{2k}(t) \\
                                            \end{pmatrix}   \\
                                          &=
                                            \begin{pmatrix}
                                              t^{-k}P_{k-1}(t\xi^{-\theta})P_{2k}(t) & \frac{P_{2k}(t)}{t\xi^{\theta}-1} (\xi^{k\theta}-t^{-k-1})\\
                                              \frac{P_{2k}(t)}{t\xi^{-\theta}-1}(t\xi^{-k\theta}-t^{-k}) & t^{-k}P_{k-1}(t\xi^{\theta})P_{2k}(t) \\
                                            \end{pmatrix}, \\
    \rho_{\theta}\left(\frac{\partial \lambda_{0}}{\partial b}\right) &=
                                            \begin{pmatrix}
                                              \xi^{-k\theta} P_{2k}(t) & \xi^{k\theta} P_{2k}(t) \\
                                              t \xi^{-k\theta} P_{2k}(t) & \xi^{k\theta} P_{2k}(t) \\
                                            \end{pmatrix}.
  \end{align*}
  This completes the first step of our computation of \( \im(\rho_{\theta}(\partial_{2})^{\# T}) \).
  To carry out the second step, consider the following set of vectors of \(C^{1}(M_{T_{2,2k+1}};\C[t^{\pm 1}]_{\rho_{\theta}}^{2})\):
  \begin{align}
    \label{eq:Z1}
    Z_{1} &= \left(\frac{\xi^{-(k-1)\theta}t^{k+1}}{\xi^{\theta}-\xi^{-\theta}}, \frac{\xi^{-\theta}}{\xi^{\theta}-\xi^{-\theta}}, 0, -\frac{t^{2k+1}\xi^{\theta}P_{2k}(t^{-1}\xi^{\theta})}{\xi^{\theta}-\xi^{-\theta}}\right), \\
    Z_{2} &= \left((t^{-1}\xi^{\theta}-1)\xi^{-k\theta}t^{k+1}, t^{-1}\xi^{-\theta}-1, 0, t^{2k+1}-1\right), \nonumber \\
    Z_{3} &= \left(0, t^{-k-2}\xi^{(k-1)\theta}-\xi^{k\theta}t^{-k-1}, 1, -\xi^{k\theta}t^{-k-1}\right),  \nonumber\\
    Z_{4} &= \left(0,0,0,1\right) \nonumber.
  \end{align}
  We claim that this collection of vectors yields a basis of \(C^{1}(M_{T_{2,2k+1}};\C[t^{\pm 1}]_{\rho_{\theta}}^{2})\).
  Indeed, if~\(Z\) denotes the matrix whose rows are \(Z_{1}, Z_{2}, Z_{3}\) and \(Z_{4}\), then \(\det (Z) = -t^{k+1}\xi^{-k\theta}\): this determinant can be computed by first expanding along the fourth row and then expanding along the third column.
  Finally, we carry out the third and last step: using this basis of vectors, we compute the image of~$\rho_\theta(\partial_2)^{\# T}~$ by observing that
  \begin{align}
    \label{eq:Z1v2}
    Z_{1} \cdot \rho_{\theta}(\partial_{2})^{\# T} &= t^{-k+1} \cdot\Delta_{\theta}(t^{-1}) \cdot v_{2}, \\
    Z_{2} \cdot \rho_{\theta}(\partial_{2})^{\# T} &= 0,  \nonumber \\
    Z_{3} \cdot \rho_{\theta}(\partial_{2})^{\# T} &= 0, \nonumber \\
    Z_{4} \cdot \rho_{\theta}(\partial_{2})^{\# T} &= v_{1}. \nonumber
  \end{align}
  We therefore deduce that the twisted homology~$\C[t^{\pm 1}]$-module~$H^{2}(M_{T_{2,2k+1}};\C[t^{\pm 1}]_{\rho_{\theta}}^{2})$ is cyclic with order \(\Delta_{\theta}(t)\).
  This concludes the proof of the first assertion of the proposition.

  In order to compute the twisted Blanchfield pairing, we first compute \(\rho_\theta(\Phi)\) using \eqref{eq:Phi} and then compute the Blanchfield pairing.
  We start with the calculation of the coefficients of the first line of~$\rho_\theta(\Phi)$.
  Looking back to~\eqref{eq:Phi}, we see that each of the two blocs consists of a difference of two expressions.
  We compute each of these terms separately. First, note that we have
  \begin{align*}
    \rho_{\theta}\left(\mu^{2k+1} \cdot \frac{\partial \mu^{-2k-1}}{\partial a}\right) &=
                                                           \begin{pmatrix}
                                                             0       & t^{k} \\
                                                             t^{k+1} & 0 \\
                                                           \end{pmatrix} 
    \begin{pmatrix}
      P_{k}(t^{-1})P_{k-1}(t\xi^{-\theta}) & t^{-1}P_{k-1}(t^{-1})P_{k-1}(t\xi^{\theta}) \\
      P_{k-1}(t^{-1})P_{k-1}(t\xi^{-\theta}) & P_{k}(t^{-1})P_{k-1}(t\xi^{\theta}) \\
    \end{pmatrix} \\
                                                         &=
                                                           \begin{pmatrix}
                                                             t^{k}P_{k-1}(t^{-1})P_{k-1}(t\xi^{-\theta}) & t^{k}P_{k}(t^{-1})P_{k-1}(t\xi^{\theta}) \\
                                                             t^{k+1}P_{k}(t^{-1})P_{k-1}(t\xi^{-\theta}) & t^{k}P_{k-1}(t^{-1})P_{k-1}(t\xi^{\theta}) \\
                                                           \end{pmatrix}, \\
    \rho_{\theta}\left(\mu^{2k+1} \cdot \frac{\partial \mu^{-2k-1}}{\partial b}\right) &=
                                                           \begin{pmatrix}
                                                             0       & t^{k} \\
                                                             t^{k+1} & 0 \\
                                                           \end{pmatrix} 
    \begin{pmatrix}
      P_{k}(t)\xi^{-k\theta }& P_{k-1}(t)\xi^{k\theta} \\
      t P_{k-1}(t) \xi^{-k\theta} & P_{k}(t) \xi^{k\theta} \\
    \end{pmatrix} \\
                                                         &=
                                                           \begin{pmatrix}
                                                             t^{k+1}\xi^{-k\theta}P_{k-1}(t) & t^{k} \xi^{k\theta}P_{k}(t) \\
                                                             t^{k+1} \xi^{-k\theta}P_{k}(t) & t^{k+1} \xi^{k\theta}P_{k-1}(t) \\
                                                           \end{pmatrix}.
  \end{align*}
  The computation of the upper left block of~$\rho_\theta(\Phi)$ also requires us to compute
  \begin{align*}
    \frac{\partial (\mu^{-2k} \cdot a^{k})}{\partial a} &= \frac{\partial{\mu^{-2k}}}{\partial a} + \mu^{-2k} \cdot \frac{\partial{a^{k}}}{\partial a}  \\
                                 &= (1 + \mu^{-1} + \mu^{-2} + \cdots + \mu^{-2k+1}) \cdot \frac{\partial (a^{k} b)}{\partial a} + \mu^{-2k} \cdot (1+a+a^{2}+\cdots+a^{k-1})  \\
                                 &= (1 + \mu^{-1}) \cdot (1 + \mu^{-2} + \cdots (\mu^{-2})^{k-1}) \cdot \frac{\partial a^{k}b}{\partial a} + \mu^{-2k} \cdot (1+a+a^{2}+\cdots+a^{k-1}) \\
    \rho_{\theta}\left(\frac{\partial(\mu^{-2k} \cdot a^{k})}{\partial a}\right) &=
                                                    \begin{pmatrix}
                                                      1 & t^{-1} \\
                                                      1 & 1 \\
                                                    \end{pmatrix} \cdot
    \begin{pmatrix}
      P_{k-1}(t^{-1}) & 0 \\
      0              & P_{k-1}(t^{-1}) \\
    \end{pmatrix} \cdot
    \begin{pmatrix}
      P_{k-1}(t\xi^{-\theta}) & 0 \\
      0                     & P_{k-1}(t\xi^{\theta}) \\
    \end{pmatrix}  \\
                                 &+
                                   \begin{pmatrix}
                                     t^{-k} & 0 \\
                                     0 & t^{-k} \\
                                   \end{pmatrix} \cdot
    \begin{pmatrix}
      P_{k-1}(t\xi^{-\theta}) & 0 \\
      0 & p_{k-1}(t\xi^{\theta}) \\
    \end{pmatrix}  \\
                                 &=
                                   \begin{pmatrix}
                                     P_{k}(t^{-1}) P_{k-1}(t\xi^{-\theta}) & t^{-1}P_{k-1}(t^{-1})P_{k-1}(t\xi^{\theta}) \\
                                     P_{k-1}(t^{-1}) P_{k-1}(t\xi^{-\theta}) & P_{k}(t^{-1})P_{k-1}(t\xi^{\theta})\\
                                   \end{pmatrix},
  \end{align*}
  and, similarly, the upper right block of~$\rho_\theta(\Phi)$ requires that we compute
  \begin{align*}
    \frac{\partial (\mu^{-2k} \cdot a^{k})}{\partial b} &= \frac{\partial \mu^{-2k}}{\partial b} = (1+\mu^{-1}) \cdot (1+\mu^{-2}+\cdots+(\mu^{-2})^{k-1}) \cdot \frac{\partial a^{k}b}{\partial b}, \\
    \rho_{\theta}\left(\frac{\partial (\mu^{-2k} \cdot a^{k})}{\partial b}\right) &=
                                                     \begin{pmatrix}
                                                       1 & t^{-1} \\
                                                       1 & 1 \\
                                                     \end{pmatrix} \cdot
    \begin{pmatrix}
      P_{k-1}(t^{-1}) & 0 \\
      0 & P_{k-1}(t^{-1}) \\
    \end{pmatrix} \cdot
    \begin{pmatrix}
      t^{k} \xi^{-k\theta} & 0 \\
      0 & t^{k} \xi^{k\theta} \\
    \end{pmatrix}  \\
                                 &=
                                   \begin{pmatrix}
                                     t^{k}\xi^{-k\theta}P_{k-1}(t^{-1}) & t^{k-1} \xi^{k\theta} P_{k-1}(t^{-1}) \\
                                     t^{k}\xi^{-k\theta}P_{k-1}(t^{-1}) & t^{k}\xi^{k\theta}P_{k-1}(t^{-1}) 
                                   \end{pmatrix},
  \end{align*}
  Consequently, using the two above sequences of computations, we have
  \begin{align*}
    \rho_{\theta}\left(a^{-k} \cdot \mu^{2k} \cdot \frac{\partial(\mu^{-2k} \cdot a^{k})}{\partial a}\right) &=
                                                                      \begin{pmatrix}
                                                                        \xi^{k\theta} & 0 \\
                                                                        0 & \xi^{-k\theta} \\
                                                                      \end{pmatrix} 
    \begin{pmatrix}
      P_{k}(t^{-1}) P_{k-1}(t\xi^{-\theta}) & t^{-1}P_{k-1}(t^{-1})P_{k-1}(t\xi^{\theta}) \\
      P_{k-1}(t^{-1}) P_{k-1}(t\xi^{-\theta}) & P_{k}(t^{-1})P_{k-1}(t\xi^{\theta})\\
    \end{pmatrix} \\
                                                                    &=
                                                                      \begin{pmatrix}
                                                                        \xi^{k\theta} P_{k}(t^{-1})P_{k-1}(t\xi^{-\theta}) & \xi^{k\theta}t^{-1}P_{k-1}(t^{-1}) P_{k-1}(t\xi^{\theta})\\
                                                                        \xi^{-k\theta} P_{k-1}(t^{-1})P_{k-1}(t\xi^{-\theta}) & \xi^{-k\theta}P_{k}(t^{-1})P_{k-1}(t\xi^{\theta}) \\
                                                                      \end{pmatrix}, \\
    \rho_{\theta}\left(a^{-k} \cdot \mu^{2k} \cdot \frac{\partial(\mu^{-2k} \cdot a^{k})}{\partial b}\right) &=
                                                                      \begin{pmatrix}
                                                                        \xi^{k\theta} & 0 \\
                                                                        0 & \xi^{-k\theta} \\
                                                                      \end{pmatrix}
    \begin{pmatrix}
      t^{k}\xi^{-k\theta}P_{k-1}(t^{-1}) & t^{k-1} \xi^{k\theta} P_{k-1}(t^{-1}) \\
      t^{k}\xi^{-k\theta}P_{k-1}(t^{-1}) & t^{k}\xi^{k\theta}P_{k-1}(t^{-1}) \\
    \end{pmatrix} \\
                                                                    &=
                                                                      \begin{pmatrix}
                                                                        t^{k}P_{k-1}(t^{-1}) & t^{k-1}\xi^{2k\theta}P_{k-1}(t^{-1}) \\
                                                                        t^{k}\xi^{-2k\theta}P_{k-1}(t^{-1}) & t^{k}P_{k-1}(t^{-1}) \\
                                                                      \end{pmatrix}.
  \end{align*}
  Looking at~\eqref{eq:Phi}, assembling these computations and taking the appropriate differences provides an explicit understanding of the first row of~$\rho_\theta(\Phi)$.
  Next, we compute entries in the second row of~$\rho_\theta(\Phi)$:
  \begin{align*}
    \rho_{\theta}\left(-\mu^{-1}\frac{\partial \mu}{\partial a}\right) &=
                                               \begin{pmatrix}
                                                 -P_{k-1}(t\xi^{-\theta}) & 0 \\
                                                 0                      & -P_{k-1}(t\xi^{\theta}) \\ \end{pmatrix}, \\
    \rho_{\theta}\left(-\mu^{-1}\frac{\partial \mu}{\partial b}\right) &=
                                               \begin{pmatrix}
                                                 -t^{k} \xi^{-k\theta} & 0 \\
                                                 0                   & -t^{k}\xi^{k\theta}
                                               \end{pmatrix}.
  \end{align*}
  Finally, using~$\rho_\theta(\Phi)$ and~\eqref{eq:ChainBlanchfieldFormula}, we can compute the cohomological twisted Blanchfield pairing of~$M_{T_{2,2k+1}}$.
  In more details, we know from~\eqref{eq:ChainBlanchfieldFormula} that if~$v \in Z^2(N;\LF^d_{\rho_\theta})$, then
  \[\Bl^{\rho_\theta}(T_{2,2k+1})([v],[v])=\frac{1}{s} \left(v \cdot \rho_\theta(\Phi) \cdot Z^{\# T}\right)^{\# T},\]
    where~$Z \in C^1(N;\LF^d_{\rho_\theta})$ satisfies~$Z\rho_\theta(\partial^2)=sv$ for some~$s \in~\C[t^{\pm 1}] \setminus \lbrace 0 \rbrace$.
  In our case, we take~$v=v_2$ and observe that~$ Z=Z_1$ is described in~\eqref{eq:Z1} and~$s=t^{-k+1}\Delta_\theta(t^{-1})=t^{-k+1}\Delta_\theta(t)$ (recall~\eqref{eq:Z1v2}). Therefore, we start by computing
  \begin{align*}
    Z_{1} \cdot &\rho_{\theta}(\Phi)^{\# T} \cdot v_{2}^{\# T}=\\
         &= \frac{1}{\xi^{\theta}-\xi^{-\theta}} \cdot \left[\xi^{-(k-1)\theta} t^{k+1} P_{k-1}(t^{-1}\xi^{\theta})(1-\xi^{k\theta}t^{k}) + \xi^{-\theta} P_{k-1}(t^{-1}\xi^{-\theta}) (1-\xi^{k\theta}t^{k+1})\right] -\\
         &- \frac{1}{\xi^{\theta}-\xi^{-\theta}}t^{k+1}\xi^{-(k-1)\theta}(1-\xi^{k\theta}t^{k})P_{2k}(t^{-1}\xi^{\theta}) = \\
         &= \frac{1}{\xi^{\theta}-\xi^{-\theta}} \cdot \left[\xi^{-(k-1)\theta} t^{k+1} P_{k-1}(t^{-1}\xi^{\theta})(1-\xi^{k\theta}t^{k}) + \xi^{-\theta} P_{k-1}(t^{-1}\xi^{-\theta}) (1-\xi^{k\theta}t^{k+1})\right] -\\
         &- \frac{1}{\xi^{\theta}-\xi^{-\theta}} \cdot t^{k+1}\xi^{-(k-1)\theta}(1-\xi^{k\theta}t^{k})\left[P_{k-1}(t^{-1}\xi^{\theta}) + t^{-k}\xi^{k\theta}P_{k}(t^{-1}\xi^{\theta})\right] = \\
         &= \frac{1}{\xi^{\theta}-\xi^{-\theta}}\left[\xi^{-\theta}P_{k-1}(t^{-1}\xi^{-\theta})(1-\xi^{-(k+1)\theta}t^{k+1}) - t \cdot \xi^{\theta} (1-t\xi^{\theta}) P_{k-1}(t\xi^{\theta})P_{k}(t^{-1}\xi^{\theta})\right] = \\
         &= \frac{P_{k-1}(t\xi^{\theta}) P_{k}(t\xi^{-\theta})}{\xi^{\theta} - \xi^{-\theta}} \left[t^{-k+1} \xi^{-k\theta} (1-t\xi^{-\theta}) - t^{-k+1}\xi^{(k+1)\theta}(1-t\xi^{\theta})\right] = \\
         &= \xi^{(k+1)\theta}t^{-k+2}P_{k-1}(t\xi^{\theta})P_{k}(t\xi^{-\theta}).
  \end{align*}
  To facilitate computations, we will now arrange that both the numerator and denominator of~$\Bl^{\rho_\theta}(T_{2,2k+1})$ are symmetric. To that effect, we observe that if~$b$ is a symmetric polynomial and~$(a/b)^\#=a/b$ in~$\F(t)/\LF$, then~$\frac{a}{b}=\frac{\frac{1}{2}(a+a^\#)}{b}$ in~$\F(t)/\LF$. Applying this remark to the symmetric polynomial~$b=\Delta_\theta(t)$, using~$\Bl^{\rho_\theta}$ as a shorthand for~$\Bl^{\rho_\theta}(T_{2,2k+1})$ and recalling from above that~$\Bl^{\rho_\theta}([v_2],[v_2])=\frac{1}{s}Z_1 \cdot \rho_\theta(\Phi)^{\#T} \cdot v_2^{\#T}$ with~$s=t^{-k+1}\Delta_\theta(t)$, we obtain
  \begin{align*}
    \Bl^{\rho_\theta}([v_{2}],[v_{2}]) &= \frac{\frac{1}{2}\xi^{(k+1)\theta}t P_{k-1}(t\xi^{\theta})P_{k}(t\xi^{-\theta})}{\Delta_{\theta}(t)} + \frac{\frac{1}{2}\xi^{-(k+1)\theta}t^{-1}P_{k-1}(t^{-1}\xi^{-\theta})P_{k}(t^{-1}\xi^{\theta})}{\Delta_{\theta}(t)} \\
                         &= \frac{\frac{1}{2}\xi^{(k+1)\theta}t P_{k-1}(t\xi^{\theta})P_{k}(t\xi^{-\theta})(t^{-2k-1}+1)}{\Delta_{\theta}(t)} \\
                         &= \frac{\frac{1}{2}(t^{2k+1} \xi^{k\theta} - t^{k+1} -t^{k}+\xi^{(k+1)\theta})(t^{-2k-1}+1)}{t^{-k}P_{2k}(t)}.
  \end{align*}
  This concludes the computation of the twisted Blanchfield pairing on the generator of the twisted cohomology~$\C[t^{\pm 1}]$-module~$H^{2}(M_{T_{2,2k+1}};\C[t^{\pm 1}]_{\rho_{\theta}}^{2})$ and thus the proof of the proposition.
\end{proof}

\subsection{The isometry type of~$\Bl^{\rho_{\theta}}(T_{2,2k+1})$.}
\label{sub:MetabelianSignaturesTorusKnot}

The aim of this subsection is to determine the isometry type of the twisted Blanchfield forms~$\Bl^{\rho_\theta}(T_{2,2k+1})$, where~$\rho_\theta=\alpha(2,\chi_\theta)$. 
The following proposition (which is Theorem~\ref{thm:DecompoTwistedBlanchfieldIntro} from the introduction) implicitly contains the values of the twisted signature jumps~$\delta\sigma_{T_{2,2k+1},\rho_\theta} \colon S^1 \to \Z$.

\begin{theorem}
  \label{thm:DecompoTwistedBlanchfield}
  Set \(\xi = \exp\left(\frac{2 \pi i}{2k+1}\right)\).
  For any \(k>0\) and \(1 \leq \theta \leq k\), there exists an isometry
  \begin{align*}
    \Bl^{\rho_{\theta}}(T_{2,2k+1}) \cong \lambda_{\theta}^{even} \oplus \lambda_{\theta}^{odd},
  \end{align*}
  where the linking forms~$\lambda_{\theta}^{even}$ and~$\lambda_{\theta}^{odd}$ are as follows:
  \begin{align*}
    \lambda_{\theta}^{odd} &= \bigoplus_{\stackrel{1 \leq e \leq k}{2 \nmid \theta + e}} \left(\mathfrak{e}(1,1, \xi^{e},\C) \oplus \mathfrak{e}(1,-1,\xi^{-e},\C)\right),
  \end{align*}
  and
  \begin{align*}
    \lambda_{\theta}^{even} &= \bigoplus_{\stackrel{1 \leq e < \theta}{2 \mid \theta+e}}\left(\mathfrak{e}(1,1,\xi^{e}, \C) \oplus \mathfrak{e}(1,-1,\xi^{-e},\C)\right) \oplus \\
                 &\bigoplus_{\stackrel{\theta < e \leq k}{2 \mid \theta+e}}\left(\mathfrak{e}(1,-1,\xi^{e}, \C) \oplus \mathfrak{e}(1,1,\xi^{-e},\C)\right).
  \end{align*}
\end{theorem}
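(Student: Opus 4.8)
The plan is to feed the data computed in Proposition~\ref{prop:TwistedModuleComputation} into the classification of Theorem~\ref{thm:decompose} and the sign-extraction algorithm of Remark~\ref{rem:Algorithm}. First I would identify the primary decomposition of the module. Since $\xi$ is a primitive $(2k+1)$-st root of unity, each basic polynomial factors as $\basicR_{\xi^i}(t)=t^{-1}(t-\xi^i)(t-\xi^{-i})$, so
\[\Delta_\theta(t)=\prod_{\substack{i=1\\ i\neq\theta}}^{k}\basicR_{\xi^i}(t)\doteq\prod_{\substack{i=1\\ i\neq\theta}}^{k}(t-\xi^i)(t-\xi^{-i})\]
is squarefree with root set $\{\xi^{\pm e}:1\le e\le k,\ e\neq\theta\}$. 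By Proposition~\ref{prop:TwistedModuleComputation} the module is cyclic of order $\Delta_\theta$, hence its primary summands are the pieces $\C[t^{\pm1}]/(t-\xi^{\pm e})$, each with $n=1$.

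Each prime $(t-\xi^{\pm e})$ is self-conjugate for the involution, since $(t-\xi^{e})^{\#}\doteq t-\xi^{e}$; consequently distinct primary summands are mutually orthogonal and, working prime by prime as in Remark~\ref{rem:Algorithm}, Theorem~\ref{thm:decompose} yields an isometry
\[\Bl^{\rho_\theta}(T_{2,2k+1})\cong\bigoplus_{\substack{1\le e\le k\\ e\neq\theta}}\bigl(\ee(1,\eps_e^{+},\xi^{e},\C)\oplus\ee(1,\eps_e^{-},\xi^{-e},\C)\bigr),\]
reducing the theorem to the determination of the signs $\eps_e^{\pm}\in\{\pm1\}$. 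To compute $\eps_e^{+}$ I would localise the pairing. After cancelling the common factor $\basicR_{\xi^\theta}$, Proposition~\ref{prop:TwistedModuleComputation} gives $\Bl^{\rho_\theta}(T_{2,2k+1})([v_2],[v_2])=h(t)/\Delta_\theta(t)$; since $[v_2]$ generates the whole cyclic module, it generates each primary summand, and writing $\Delta_\theta(t)=(t-\xi^{e})\widetilde\Delta_e(t)$ the local value on the $\xi^{e}$-summand is $r(t)/(t-\xi^{e})$ with $r\equiv h/\widetilde\Delta_e\pmod{t-\xi^{e}}$. By Remark~\ref{rem:Algorithm}, $\eps_e^{+}=+1$ exactly when $r$ is $\xi^{e}$-positive, which (comparing with the explicit $\xi^{e}$-positive polynomial $1-\xi^{e}t$ following Definition~\ref{def:positive}, valid since $\iim(\xi^{e})>0$ for $1\le e\le k$) amounts to determining the sign of the real number $r(\xi^{e})/(1-\xi^{2e})$.

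The heart of the argument is then a root-of-unity evaluation. Using $\xi^{2k+1}=1$ and $\xi^{k+1}=\xi^{-k}$, the factor $t^{-2k-1}+1$ equals $2$ at $t=\xi^{e}$ and the numerator $t^{2k+1}\xi^{k\theta}-t^{k+1}-t^{k}+\xi^{(k+1)\theta}$ collapses to the real quantity
\[A(\xi^{e})=2\Bigl((-1)^{\theta}\cos\tfrac{\pi\theta}{2k+1}-(-1)^{e}\cos\tfrac{\pi e}{2k+1}\Bigr).\]
Because $\tfrac{\pi\theta}{2k+1},\tfrac{\pi e}{2k+1}\in(0,\tfrac{\pi}{2})$, both cosines are positive and $\cos\tfrac{\pi\theta}{2k+1}>\cos\tfrac{\pi e}{2k+1}$ iff $\theta<e$, so the sign of $A(\xi^{e})$ is governed precisely by the parity of $\theta+e$ together with whether $e<\theta$ or $e>\theta$ --- exactly the dichotomy separating $\lambda_\theta^{odd}$ from the two halves of $\lambda_\theta^{even}$. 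The denominator supplies the remaining signs: $\basicR_{\xi^\theta}(\xi^{e})=2\cos\tfrac{2\pi e}{2k+1}-2\cos\tfrac{2\pi\theta}{2k+1}$ is positive iff $e<\theta$, while $\widetilde\Delta_e(\xi^{e})=(1-\xi^{-2e})\prod_{i\neq\theta,e}\basicR_{\xi^{i}}(\xi^{e})$ has a real product part in which each factor $\basicR_{\xi^{i}}(\xi^{e})$ is negative exactly when $i<e$, contributing the sign $(-1)^{\#\{i<e,\ i\neq\theta\}}$.

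The main obstacle is this bookkeeping of signs: one must check that the product of $\sign A(\xi^{e})$, $\sign\basicR_{\xi^\theta}(\xi^{e})$ and $(-1)^{\#\{i<e,\ i\neq\theta\}}$ (the complex factors $(1-\xi^{\pm2e})$ combining into the positive quantity $(1-\xi^{2e})(1-\xi^{-2e})$) collapses to $\eps_e^{+}=+1$ when $2\nmid\theta+e$ or when $2\mid\theta+e$ and $e<\theta$, and to $\eps_e^{+}=-1$ when $2\mid\theta+e$ and $e>\theta$. Finally, the conjugate signs come for free: $A(\xi^{-e})=A(\xi^{e})$ and the product part of $\widetilde\Delta$ is unchanged, while the reference polynomial at $\xi^{-e}$ is $-(1-\xi^{-e}t)$ because $\iim(\xi^{-e})<0$; this single sign flip gives $\eps_e^{-}=-\eps_e^{+}$, matching the paired summands $\ee(1,\pm1,\xi^{e},\C)\oplus\ee(1,\mp1,\xi^{-e},\C)$ in the statement. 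Collecting the signs over all $e\neq\theta$ produces exactly the decomposition $\lambda_\theta^{even}\oplus\lambda_\theta^{odd}$.
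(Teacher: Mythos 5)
Your proposal is correct and takes essentially the same route as the paper: both feed the generator and self-pairing from Proposition~\ref{prop:TwistedModuleComputation} into the classification, split the cyclic module of square-free order $\Delta_\theta$ into rank-one $(t-\xi^{\pm e})$-primary summands, and extract each sign $\eps_e^{\pm}$ by evaluating numerator and denominator at $\xi^{\pm e}$ --- your sign analysis of $A(\xi^e)=2\bigl((-1)^{\theta}\cos\tfrac{\pi\theta}{2k+1}-(-1)^{e}\cos\tfrac{\pi e}{2k+1}\bigr)$ and of the factors $\basicR_{\xi^i}(\xi^e)$ is exactly the content of the paper's Lemmas~\ref{lem:GxiPositive} and~\ref{lem:HxiPositive}. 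The only difference is cosmetic bookkeeping (the paper factors $F(t)(t-\xi^e)$ into an explicitly $\xi^e$-positive function times a real sign, while you localise by partial fractions and compare against the model polynomial $1-\xi^{e}t$), and your final sign collapse checks out in all four cases, including the flip $\eps_e^{-}=-\eps_e^{+}$ at conjugate roots.
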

\begin{proof}
  Using Proposition~\ref{prop:TwistedModuleComputation}, we can choose a generator~$x$ of the cyclic module~$H^{2}(M_{T_{2,2k+1}};\C[t^{\pm 1}]_{\rho_{\theta}}^{2})$ so that~$\Bl^{\rho_{\theta}}(T_{2,2k+1})(x,x) =F(t)$, where
  \[F(t) = \frac{\frac{1}{2}(t^{2k+1} \xi^{k\theta} - t^{k+1} -t^{k}+\xi^{(k+1)\theta})(t^{-2k-1} + 1)}{t^{-k}P_{2k}(t)}.\]
  Since~$\Delta_\theta(t)=\prod_{\stackrel{e=1}{e \neq \theta}}^{k} \basicR_{\xi^{e}}(t)$, we know that the isometry type of~$\Bl^{\rho_{\theta}}(T_{2,2k+1})$ will involve a direct sum of the basic pairings~$ \ee(1,0,\pm 1,\xi^{e},\C)$. In order to determine the correct signs, and prove the theorem, we will apply the procedure described in Subsection~\ref{sub:effective}. The aforementioned signs depend on whether~$F(t)(t-\xi^e)$ is \(\xi^{e}\)-positive or \(\xi^{e}\)-negative, for \(1 \leq e \leq 2k\) and \(e \neq \theta, 2k+1-\theta\). Notice that~$F(t) = \frac{G(t)}{H_{e}(t)} \cdot \frac{1}{\basicR_{\xi^{e}}(t)}$,  where, for \(1 \leq e \leq 2k\), we set
  \begin{align*}
    G(t) &= \frac{1}{2}(t^{2k+1} \xi^{k\theta} - t^{k+1} -t^{k}+\xi^{(k+1)\theta})(t^{-2k-1} + 1), \\
    H_{e}(t) &= \frac{t^{-k}P_{2k}(t)}{\basicR_{\xi^{e}}(t)} = \prod_{\stackrel{i=1}{i \neq e}}^{k} \basicR_{\xi^{i}}(t).
  \end{align*}
  Reformulating, the theorem will immediately follow once we determine the~$e$ for which the following function is~$\xi^e$-positive or~$\xi^e$-negative:
  \begin{equation}\label{eq:twisted-Bl-torus-knot}
    F(t) \cdot (t-\xi^{e}) = (-1)^{\theta+e}\left[(-1)^{\theta+1}G(t)\right] \cdot \left[\frac{(-1)^{e-1}}{H_{e}(t)(1-t^{-1}\xi^{-e})}\right].
  \end{equation}
  In Lemma~\ref{lem:HxiPositive} below, we show that~$\frac{(-1)^{e-1}}{H_{e}(t)(1-t^{-1}\xi^{-e})}$ is~$\xi^e$-positive, while in Lemma~\ref{lem:GxiPositive}, we study the sign of~$(-1)^{\theta+1}G(\xi^e)$.

  \begin{lemma}
    \label{lem:HxiPositive}
    For any \(1 \leq e \leq 2k\), the following function is \(\xi^{e}\)-positive:
    \[\frac{(-1)^{e-1}}{H_{e}(t)(1-t^{-1}\xi^{-e})}.\]
  \end{lemma}
  \begin{proof}
    Notice that \(\frac{1}{1-t^{-1}\xi^{-e}}\) is \(\xi^{e}\)-positive if \(1 \leq e \leq k\) and \(\xi^{e}\)-negative if \(k+1 \leq e \leq 2k\).
    As a consequence, the lemma boils down to determining the sign of~$(-1)^{e-1}H_e(\xi^e)$. We first check this when \(1 \leq e \leq k\). For any \(1 \leq i \leq k\) such that \(e \neq i\), we have
    \[\basicR_{\xi^{i}}(\xi^{e}) = 2\re(\xi^{e}) - 2 \re(\xi^{i})
      \begin{cases}
        >0 & \text{if } e<i, \\
        <0 & \text{if } e>i. \\
      \end{cases}
    \]
    Combining these inequalities with the definition of~$H_e(t)$, we obtain
    \[(-1)^{e-1}H_{e}(\xi^{e}) = \underbrace{(-1)^{e-1}\prod_{i=1}^{e-1}\basicR_{\xi^{i}}(\xi^{e})}_{>0} \cdot \underbrace{\prod_{i=e+1}^{k}\basicR_{\xi^{i}}(\xi^{e})}_{>0}>0.\]
    The case \(k+1 \leq e \leq 2k\) can be reduced the previous one.
    Indeed, notice that \(H_{e}(t) = H_{2k+1-e}(t)\) and \(\basicR_{\xi^{i}}(\xi^{e}) = \basicR_{\xi^{i}}(\xi^{2k+1-e})\), because \(\basicR_{\xi^{i}}(t)\) is a real polynomial. This implies that
    \[(-1)^{e-1}H_{e}(\xi^{e}) = (-1)^{2k+1-e}H_{2k+1-e}(\xi^{2k+1-e})<0.\]
    This concludes the proof of the lemma.
  \end{proof}

  Let us now study the sign of the numerator of~\eqref{eq:twisted-Bl-torus-knot}.
  Notice first that for \(1 \leq e \leq k\), we have
  \[G(\xi^{e}) = G(\xi^{-e}) = G(\xi^{2k+1-e}),\]
  and it is therefore sufficient to determine the sign of \(G(\xi^{e})\) for \(1 \leq e \leq k\).
  \begin{lemma}\label{lem:GxiPositive}
    For \(1 \leq e \leq k\) such that \(e \neq \theta\) we have
    \[(-1)^{\theta+1}G(\xi^{e})
      \begin{cases}
        >0, & \text{\ if\ } 1 \leq e < \theta \leq k \text{\ and\ } 2 \mid \theta + e, \\
        <0, & \text{\ if\ } 1 \leq \theta < e \leq k \text{\ and\ } 2 \mid \theta+e, \\
        <0, & \text{\ if\ } 2 \nmid \theta+e.
      \end{cases}
    \]
  \end{lemma}
  \begin{proof}
    Using successively the definition of~$G(t)$, the fact that~$\xi=\operatorname{exp}(\frac{2\pi i}{2k+1})$ and the identities
    \[\cos (x) - \cos (y) = -2 \sin\left(\frac{x+y}{2}\right) \sin\left(\frac{x-y}{2}\right),\]
    we obtain the following equality:
    \begin{align*}
      G(\xi^{e}) &= 2 (\re(\xi^{k\theta})-\re(\xi^{ke})) \\
            &= 2 \left(\cos\left(\frac{2\pi k\theta}{2k+1}\right) - \cos\left(\frac{2\pi ke}{2k+1}\right)\right)  \\
            &= -4 \sin\left(\frac{\pi k (\theta+e)}{2k+1}\right) \sin\left(\frac{\pi k(\theta-e)}{2k+1}\right)  \\
            &= -4 \sin\left(\frac{(\theta+e)\pi}{2} - \frac{(\theta+e)\pi}{4k+2}\right) \sin\left(\frac{(\theta-e)\pi}{2}-\frac{(\theta-e)\pi}{4k+2}\right) = (\ast).
    \end{align*}
    As consequence, we consider two cases depending on the parity of \(\theta + e\).
    First, suppose that \(\theta+e\) is even.
    Using the identity~$\sin(y+n\pi)=(-1)^{n}\sin(y)$, we deduce that previous expression becomes 
    \begin{align*}
      (\ast) &= 4(-1)^{\theta+1}\sin\left(\frac{(\theta+e)\pi}{4k+2}\right) \sin\left(\frac{(\theta-e)\pi}{4k+2}\right).
    \end{align*}
    Since~$\sin\left(\frac{(\theta+e)\pi}{4k+2}\right)>0$ and \(\sin\left(\frac{(\theta-e)\pi}{4k+2}\right)>0\) if \(\theta>e\), we deduce that for \(\theta+e\) even
    \[(-1)^{\theta+1}(\ast)
      \begin{cases}
        >0, & 1 \leq e < \theta \leq k, \\
        <0, & 1 \leq \theta < e \leq k. \\
      \end{cases}
    \]

    Next, we suppose that \(\theta+e\) is odd.
    Using the identities \(\sin(y+\pi n) = (-1)^{n} \sin(y)\) and \(\sin(y + \pi/2) = \cos(y)\), we obtain
    \begin{align*}
      (\ast) &= 4(-1)^{\theta}\cos\left(\frac{(\theta+e)\pi}{4k+2}\right) \cos\left(\frac{(\theta-e)\pi}{4k+2}\right).
    \end{align*}
    Since \(1 \leq e \leq k\), we have~$\cos\left(\frac{(\theta+e)\pi}{4k+2}\right)>0$ and~$\cos\left(\frac{(\theta-e)\pi}{4k+2}\right)>0$.
    Consequently, if \(\theta+e\) is odd, \((-1)^{\theta+1}(\ast)<0\).
    This concludes the proof of the lemma.
  \end{proof}
  The proof of Theorem~\ref{thm:DecompoTwistedBlanchfield} is now concluded by using Lemmas~\ref{lem:HxiPositive} and~\ref{lem:GxiPositive} as well as~\eqref{eq:twisted-Bl-torus-knot} and the remarks which were made at the beginning of the proof.
\end{proof}

\section{Obstructing the sliceness of algebraic knots }
\label{sec:Sliceness}
The goal of this section is to illustrate how the combination of Theorem~\ref{thm:DecompoTwistedBlanchfield} and the satellite formula from Theorem~\ref{thm:metabelian-cabling-formula} can be used to obstruct certain algebraic knots from being slice.
For concreteness, we focus on the knot $T_{2,3;2,13} \# T_{2,15} \# -T_{2,3;2,15} \# -T_{2,13}$, an example that was previously considered by Hedden, Kirk and Livingston~\cite{HeddenKirkLivingston}.
\color{black}

Throughout this section, for an integer \(\ell>0\) we set~\(\xi_{\ell} = \exp\left(2\pi i/\ell\right)\).
\subsection{Characters on covers of cable knots}
\label{sub:HeddenKirkLivingston}

Given a knot~$K$ and an odd integer~$d$, use~$K_{2,d}$ to denote its~$(2,d)$-cable.
In other words,~$K_{2,d}$ is the satellite knot with pattern the~$(2,d)$ torus knot~$T_{2,d}$, companion~$K$ and infection curve~$\eta = a$ (using the notation from Section~\ref{sub:ChainComplex})~depicted in Figure~\ref{fig:diagram-T_2_2k+1}.

The preimage of~$\eta$ in the \(2\)-fold branched cover~$\Sigma_2(T_{2,d})$ consists of two curves~$\widetilde{\eta}_1,\widetilde{\eta}_2$.
Denote by~$\mu_\eta$ and~$\lambda_\eta$ the meridian and longitude of~$\eta$ and write~$\widetilde{\mu}_i$ and~$\widetilde{\lambda}_i$ for some meridian-longitude pair of the boundary of the tubular neighbourhood of~$\widetilde{U}_i \subset \Sigma_2(T_{2,d})$ for~$i=1,2$.
Note that~$\widetilde{\mu}_1$ and~$\widetilde{\mu}_2$ vanish in~$H_1(\Sigma_2(T_{2,d});\Z)=\Z_d$, while the lift~$\widetilde{\lambda}_1$ generates~$H_1(\Sigma_2(T_{2,d});\Z)$ and~$\widetilde{\lambda}_2=-\widetilde{\lambda}_1$ in~$H_1(\Sigma_2(T_{2,d});\Z)$.
The following topological result is proved in \cite[Section 2]{HeddenKirkLivingston}.
\begin{proposition}
  \label{prop:HKLCable}
  Let~$\ell$ be an odd prime.
  To any character~$\chi \colon H_1(\Sigma_2(K_{2,d});\Z) \to \Z_{\ell}$, one can associate an integer~$\theta$ modulo~$\ell$ by the condition~$\chi(\widetilde{\lambda}_1)=\xi_{\ell}^{\theta}$.
  This character is denoted~$\chi_{\theta}$.
  In particular, this sets up a bijective correspondence between~$\Z_{\ell}$-valued characters on~$H_1(\Sigma_2(K_{2,d});\Z)$ and on~$ H_1(\Sigma_2(T_{2,d});\Z)$.
\end{proposition}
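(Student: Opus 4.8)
The plan is to compute $H_1(\Sigma_2(K_{2,d});\Z)$ directly from the infection description of $K_{2,d}$, identify it with $H_1(\Sigma_2(T_{2,d});\Z)=\Z_d$ in a way that carries $\widetilde{\lambda}_1$ to $\widetilde{\lambda}_1$, and then read off the character correspondence. The interesting content is the claim that infecting by the companion $K$ leaves the homology of the branched cover unchanged, because the knot exterior $E_K:=S^3\setminus\mathcal{N}(K)$ has the homology of the solid torus it replaces.

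First I would record the branched-cover decomposition coming from the satellite structure. Writing $W:=\Sigma_2(T_{2,d})\setminus\mathcal{N}(\widetilde{\eta}_1\sqcup\widetilde{\eta}_2)$ for the branched cover of the pattern region with the two lifts of $\eta$ removed, the branch locus $K_{2,d}$ lies entirely in the pattern part, so
\[
  \Sigma_2(K_{2,d}) = W \cup_{\partial} (E_K \sqcup E_K).
\]
Here one uses that the winding number $w=\lk(\eta,T_{2,d})=2$ is even: since a loop in the complement of the branch set lifts according to its linking number mod $2$, the curve $\eta$ lifts to the two disjoint curves $\widetilde{\eta}_1,\widetilde{\eta}_2$, and over each lift one glues an unbranched copy of $E_K$. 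Verifying this lifting statement, and hence that exactly two copies of $E_K$ are glued in, is the one genuinely topological input; everything afterwards is homological bookkeeping.

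Next I would run Mayer--Vietoris for this splitting. The gluing identifies the meridian $\mu_\eta$ with the $0$-framed longitude $\lambda_K$ and $\lambda_\eta$ with $\mu_K$; passing to the cover, the two boundary tori inherit pairs $(\widetilde{\mu}_i,\widetilde{\lambda}_i)$ with $\widetilde{\mu}_i\mapsto\lambda_{K,i}$ and $\widetilde{\lambda}_i\mapsto\mu_{K,i}$. As $H_1(E_K;\Z)=\Z$ is generated by $\mu_K$ while $\lambda_K$ is null-homologous, the images in $H_1(E_{K,i})$ are $\widetilde{\mu}_i\mapsto 0$ and $\widetilde{\lambda}_i\mapsto\mu_{K,i}$. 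All pieces are connected and the inclusion of $A\cap B=T^2\sqcup T^2$ into $E_K\sqcup E_K$ is an isomorphism on reduced $\widetilde{H}_0$, so the connecting map vanishes and $H_1(\Sigma_2(K_{2,d}))$ is the cokernel of
\[
  H_1(T^2)\oplus H_1(T^2)\longrightarrow H_1(W)\oplus H_1(E_{K,1})\oplus H_1(E_{K,2}).
\]
The relations coming from the $\widetilde{\lambda}_i$ let one eliminate the generators $\mu_{K,i}$, leaving $H_1(W)/\langle\widetilde{\mu}_1,\widetilde{\mu}_2\rangle$. But this is exactly the presentation of $H_1(\Sigma_2(T_{2,d}))$ obtained by instead filling $W$ with the two solid tori $\mathcal{N}(\widetilde{\eta}_i)$, whose meridians are the $\widetilde{\mu}_i$. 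Hence the inclusion $W\hookrightarrow\Sigma_2(K_{2,d})$ induces an isomorphism $H_1(\Sigma_2(K_{2,d}))\xrightarrow{\cong}H_1(\Sigma_2(T_{2,d}))=\Z_d$ carrying $[\widetilde{\lambda}_1]$ to $[\widetilde{\lambda}_1]$; in particular $\widetilde{\lambda}_1$ generates $H_1(\Sigma_2(K_{2,d}))$.

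Finally I would deduce the character statement. Since $\widetilde{\lambda}_1$ generates the cyclic group $H_1(\Sigma_2(K_{2,d}))\cong\Z_d$, any character $\chi\colon H_1(\Sigma_2(K_{2,d}))\to\Z_\ell$ is determined by $\chi(\widetilde{\lambda}_1)$, and writing $\chi(\widetilde{\lambda}_1)=\xi_\ell^{\theta}$ defines $\theta\in\Z/\ell$; thus $\chi\mapsto\theta$ is well-defined and injective, and surjectivity onto the admissible $\theta$ (those with $d\theta\equiv 0\bmod\ell$, automatic since $\widetilde{\lambda}_1$ has order dividing $d$) is immediate. The identical parametrization applies verbatim to $H_1(\Sigma_2(T_{2,d}))=\Z_d$, and the two are intertwined by the isomorphism above, giving the asserted bijection $\chi\leftrightarrow\chi_\theta$. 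I expect the main obstacle to be the first, topological step---justifying the branched-cover decomposition and the lifting of $\eta$ to two curves---after which the statement reduces to the short Mayer--Vietoris calculation built on the fact that $E_K$ has the homology of the solid torus it replaces.
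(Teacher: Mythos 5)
Your proof is correct. Note that the paper does not actually prove Proposition~\ref{prop:HKLCable} at all: it is stated with a citation to Section~2 of Hedden--Kirk--Livingston, and your argument is essentially a correct reconstruction of that cited proof --- the equivariant satellite decomposition $\Sigma_2(K_{2,d}) = W \cup_\partial (E_K \sqcup E_K)$ with $W = \Sigma_2(T_{2,d})\setminus\mathcal{N}(\widetilde\eta_1\sqcup\widetilde\eta_2)$, followed by Mayer--Vietoris and the observation that killing the $\widetilde\mu_i$ in $H_1(W)$ is the same presentation one gets by refilling the solid tori. The one step you compress, that the preimage of $E_K$ consists of two \emph{trivial} copies, does go through: the monodromy of the branched cover restricted to the glued-in piece factors through $H_1(E_K)=\Z\langle\mu_K\rangle$, and $\mu_K$ is identified with $\lambda_\eta$, whose linking number with the satellite knot equals $\lk(\eta,T_{2,d})=2\equiv 0 \pmod 2$, so the restricted double cover is trivial; your lifting remark for $\eta$ is the same computation seen from the pattern side. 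Your bookkeeping of the gluing ($\widetilde\mu_i\mapsto\lambda_{K,i}=0$, $\widetilde\lambda_i\mapsto\mu_{K,i}$), the vanishing of the connecting homomorphism, and the final character correspondence via the matched generator $\widetilde\lambda_1$ are all accurate, the only cosmetic quibble being your phrasing of the admissibility constraint $d\theta\equiv 0\pmod\ell$, where the real point is simply that the same constraint appears on both sides and is intertwined by the isomorphism.
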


Given an oriented knot~$K$, as is customary in knot concordance, we use~$-K$ to denote the mirror image of~$K$ with the reversed orientation, i.e.~$-K=\overline{K}^r$. The next remark, which follows~\cite[Lemma 3.2]{HeddenKirkLivingston}, describes the characters on~$H_1(\Sigma_2(-K);\Z)$.

\begin{remark}
  \label{rem:ReverseMirror}
  By definition of the reverse mirror image, there is an orientation reversing homeomorphism~$h \colon \Sigma_2(K) \to \Sigma_2(-K)$ and, from now on, it will be understood that we identify the characters on~$H_1(\Sigma_2(K);\Z)$
  and with those on~$H_1(\Sigma_2(-K);\Z)$ via this homeomorphism.
  With this convention, the same proof as in \cite[Proposition~3.4]{BCP_Top} shows that~$\Bl_{\alpha(2,\chi)}(-K)=-\Bl_{\alpha(2,\chi)}(K)$.
  Note however that the character we fixed on~$-K$ is \emph{not} the one obtained by combining the second and third items of \cite[Proposition~3.4]{BCP_Top}.
\end{remark}

\subsection{A concrete example}
\label{sub:ConcreteExample}
Denote by \(T_{\ell,d;r,s}\) the \((r,s)\)-cable of the \((\ell,d)\)-torus knot.
From now on, we consider the following algebraic knot which was thoroughly studied by Hedden, Kirk and Livingston~\cite{HeddenKirkLivingston}:
\begin{align}
  \label{eq:HKLKnot}
  K &= T_{2,3;2,13} \# T_{2,15} \# -T_{2,3;2,15} \# -T_{2,13} \\
    &= K_{1} \# K_{2} \# K_{3} \# K_{4}. \nonumber
\end{align}
Our goal is to study metabelian Blanchfield pairings of the form~$\Bl_{\alpha(2,\chi)}(K)$.
We start by discussing characters on~$H_1(\Sigma_2(K);\Z)$.
Using the decomposition of~$K$ as~$K_{1} \# K_{2} \# K_{3} \# K_{4}$, we obtain the direct sum decomposition~$H_1(\Sigma_2(K);\Z)=H_1(\Sigma_2(K_1);\Z) \oplus \cdots \oplus  H_1(\Sigma_2(K_4);\Z)$.
Furthermore, by Proposition~\ref{prop:HKLCable}, we have the isomorphisms~$H_1(\Sigma_2(K_i);\Z)\cong H_1(\Sigma_2(T_{2,13});\Z)$ for~$i=1,4$ and~$H_1(\Sigma_2(K_i);\Z) \cong H_1(\Sigma_2(T_{2,15});\Z)$ for~$i=2,3$.
Since these isomorphisms identify the corresponding characters, we have obtained the following lemma.

\begin{lemma}
  \label{lem:CharactersOfK}
  Let~$\ell$ be an odd prime.
  For the knot~$K$ described in~\eqref{eq:HKLKnot}, every character~$\chi \colon H_1(\Sigma_2(K);\Z) \to \Z_{\ell}$ can be written as~$\chi= \chi_{1}+\chi_{2}+\chi_{3}+\chi_{4}$ with~$\chi_i:=\chi_{\theta_i} \colon H_1(\Sigma_2(K_i);\Z) \to~\Z_{\ell}$, where \(0 \leq \theta_{1},\theta_{4}\leq 12\) and \(0 \leq \theta_{2},\theta_{3} \leq 14\).
\end{lemma}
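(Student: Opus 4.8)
The plan is to read the lemma off the homology splitting $H_1(\Sigma_2(K);\Z) \cong \bigoplus_{i=1}^4 H_1(\Sigma_2(K_i);\Z)$ recorded just before the statement, by dualising it to characters and then identifying each summand with the homology of a torus knot branched cover. The three inputs I would use are: additivity of $H_1$ of branched double covers under connected sum (already in hand, since $\Sigma_2(K) \cong \Sigma_2(K_1) \# \cdots \# \Sigma_2(K_4)$), Proposition~\ref{prop:HKLCable} for the two cable summands, and Remark~\ref{rem:ReverseMirror} for the two reverse-mirror summands.

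First I would dualise. Applying $\Hom(-,\Z_\ell)$ to the direct sum decomposition gives a natural isomorphism $\Hom(H_1(\Sigma_2(K);\Z),\Z_\ell) \cong \bigoplus_{i=1}^4 \Hom(H_1(\Sigma_2(K_i);\Z),\Z_\ell)$, so a character $\chi$ is exactly the datum of its four restrictions $\chi_i := \chi|_{H_1(\Sigma_2(K_i);\Z)}$. Writing $\chi = \chi_1 + \chi_2 + \chi_3 + \chi_4$ then simply records that $\chi$ is recovered from the $\chi_i$ by extending each one by zero on the complementary summands. It remains to name each $\chi_i$ and to determine the range of its parameter.

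Next I would identify each summand with a torus knot cover. For $i = 2$ the summand is $H_1(\Sigma_2(T_{2,15});\Z)$ on the nose, and for the cable $K_1 = T_{2,3;2,13}$ Proposition~\ref{prop:HKLCable} with $d = 13$ provides a character-preserving identification with $H_1(\Sigma_2(T_{2,13});\Z)$, under which $\chi_1$ becomes some $\chi_{\theta_1}$. The two mirror summands $K_4 = -T_{2,13}$ and $K_3 = -T_{2,3;2,15}$ are treated by first transporting characters through the orientation-reversing homeomorphism of Remark~\ref{rem:ReverseMirror}, which identifies $\Z_\ell$-valued characters on $H_1(\Sigma_2(-J);\Z)$ with those on $H_1(\Sigma_2(J);\Z)$, and then, for $K_3$, applying Proposition~\ref{prop:HKLCable} with $d = 15$. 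Finally, the computation of Subsection~\ref{sub:blanchf-forms-twist} gives $H_1(\Sigma_2(T_{2,2k+1});\Z) \cong \Z_{2k+1}$, so the summands for $i = 1,4$ are cyclic of order $13$ and those for $i = 2,3$ are cyclic of order $15$. Since a character of a cyclic group is determined by the image of a fixed generator and labelled by $\theta_i$ following the recipe of Proposition~\ref{prop:HKLCable}, the parameter $\theta_i$ ranges modulo the order of the group, yielding $0 \le \theta_1,\theta_4 \le 12$ and $0 \le \theta_2,\theta_3 \le 14$.

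The step I expect to require the most care is the bookkeeping for the two mirror summands: I must check that composing Remark~\ref{rem:ReverseMirror} with Proposition~\ref{prop:HKLCable} carries the distinguished generator, equivalently the labelling $\theta \mapsto \chi_\theta$, to the corresponding one for $T_{2,13}$ and $T_{2,15}$, so that the $\theta_i$ genuinely sweep out the full sets of residues and are not shifted or negated by the orientation reversal. Everything else is the formal dualisation of a direct sum together with the cited identifications.
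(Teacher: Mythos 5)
Your proof is correct and follows essentially the same route as the paper, which obtains the lemma directly from the connected-sum splitting $H_1(\Sigma_2(K);\Z)\cong\bigoplus_{i=1}^4 H_1(\Sigma_2(K_i);\Z)$ together with the character identifications of Proposition~\ref{prop:HKLCable} and the convention of Remark~\ref{rem:ReverseMirror} for the mirrored summands. Your final worry about the labelling being shifted or negated is harmless: Remark~\ref{rem:ReverseMirror} fixes the identification of characters on $\Sigma_2(-J)$ with those on $\Sigma_2(J)$ by convention, and in any case negation permutes the residues $\lbrace 0,\ldots,12\rbrace$ and $\lbrace 0,\ldots,14\rbrace$, so the stated ranges are swept out regardless.
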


\begin{remark}
  \label{rem:HalfCharacters}
  To study the metabelian 
signatures, it is enough to consider the cases \(0 \leq \theta_{1},\theta_{4}\leq 6\) and \(0 \leq \theta_{2},\theta_{3} \leq 7\), indeed this follows from the fact that the representations~\(\alpha(2,\chi_{\alpha})\) and \(\alpha(2,\chi_{-\alpha})\) are equivalent.
  To be more precise, if we set~\(A = \bsm 0&1\\ t&0 \esm \), then we~get~\(A \alpha(2,\chi_{\alpha}) A^{-1} =~\alpha(2,\chi_{-\alpha})\).
\end{remark}

The next proposition describes the Witt class of the metabelian Blanchfield pairing~$ \Bl_{\alpha(2,\chi)}(K)~$.

\begin{proposition}
  \label{prop:MetabelianBlanchfieldHKLKnot}
  Let~$K$ be the knot described in~\eqref{eq:HKLKnot} and let~$\chi \colon H_1(\Sigma_2(K);\Z) \to \Z_{\ell}~$ be a character.
  Write~$\chi=\chi_1+\chi_2+\chi_3+\chi_4$ as in Lemma~\ref{lem:CharactersOfK}, where~$\chi_i=\chi_{\theta_i}$ with \(0 \leq \theta_{1},\theta_{4}\leq 6\) and \(0 \leq \theta_{2},\theta_{3} \leq 7\).
  Then the metabelian Blanchfield form~$ \Bl_{\alpha(2,\chi)}(K)~$ is Witt equivalent to 
  \begin{align}
    \label{eq:HKLMetabelianDecompo}
    & \Bl_{\alpha(2,\chi_{1})}(T_{2,13}) \oplus -\Bl_{\alpha(2,\chi_{4})}(T_{2,13}) \\
    &\oplus \Bl_{\alpha(2,\chi_{2})}(T_{2,15}) \oplus -\Bl_{\alpha(2,\chi_{3})}(T_{2,15})  \nonumber \\
    &\oplus \Bl(T_{2,3})(\xi_{13}^{\theta_{1}}t) \oplus \Bl(T_{2,3})(\xi_{13}^{-\theta_{1}}t) \oplus -\Bl(T_{2,3})(\xi_{15}^{\theta_{3}}t) \oplus -\Bl(T_{2,3})(\xi_{15}^{-\theta_{3}}t). \nonumber
  \end{align}
\end{proposition}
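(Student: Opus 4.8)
The plan is to reduce the computation to the four connected summands $K_1,\dots,K_4$ via the connected-sum formula, dispose of the two torus-knot summands directly, and treat the two cable summands with the satellite formula. Concretely, writing $K=K_1\#K_2\#K_3\#K_4$ and applying Corollary~\ref{cor:MetabelianConnectedSum} repeatedly, while using Lemma~\ref{lem:CharactersOfK} to decompose $\chi=\chi_1+\chi_2+\chi_3+\chi_4$ with $\chi_i=\chi_{\theta_i}$ supported on $H_1(\Sigma_2(K_i);\Z)$, one obtains a Witt equivalence
\[
  \Bl_{\alpha(2,\chi)}(K)\ \sim\ \bigoplus_{i=1}^{4}\Bl_{\alpha(2,\chi_i)}(K_i),
\]
after which it suffices to analyse each of the four summands.

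The two plain torus summands are immediate. For $K_2=T_{2,15}$ one keeps $\Bl_{\alpha(2,\chi_2)}(T_{2,15})$, and for $K_4=-T_{2,13}$ Remark~\ref{rem:ReverseMirror} yields $\Bl_{\alpha(2,\chi_4)}(-T_{2,13})=-\Bl_{\alpha(2,\chi_4)}(T_{2,13})$. These supply the second terms on the first two lines of~\eqref{eq:HKLMetabelianDecompo}.

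For the cable summands I would invoke Theorem~\ref{thm:metabelian-cabling-formula}. The knot $K_1=T_{2,3;2,13}$ is the satellite with pattern $T_{2,13}$, companion $T_{2,3}$ and infection curve $\eta=a$, so the winding number is $w=\lk(\eta,T_{2,13})=2$. Since $n=2$ divides $w=2$, case~(1) applies, and the companion contributions are the \emph{classical} Blanchfield form $\Bl(T_{2,3})$ (two summands, as the relevant sum has $n=2$ terms). By Proposition~\ref{prop:HKLCable} the induced pattern character is identified with $\chi_1=\chi_{\theta_1}$, and so
\[
  \Bl_{\alpha(2,\chi_1)}(T_{2,3;2,13})\ \cong\ \Bl_{\alpha(2,\chi_1)}(T_{2,13})\oplus\Bl(T_{2,3})(\xi_{13}^{\theta_1}t)\oplus\Bl(T_{2,3})(\xi_{13}^{-\theta_1}t).
\]
The knot $K_3=-T_{2,3;2,15}$ is handled identically, replacing $T_{2,13}$ by $T_{2,15}$ and $\xi_{13}$ by $\xi_{15}$, after first applying Remark~\ref{rem:ReverseMirror} to extract an overall minus sign that then distributes over the direct sum.

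The main obstacle is the bookkeeping in this last step: confirming that the two companion evaluation points produced by the summands $i=1,2$ of the satellite formula are precisely the inverse pair $\xi_{13}^{\theta_1},\xi_{13}^{-\theta_1}$ (respectively $\xi_{15}^{\pm\theta_3}$). This amounts to evaluating $\chi_i(q_Q(\mu_Q^{-w}\eta))$ for the two sub-characters attached to the two lifts $\widetilde\eta_1,\widetilde\eta_2$ of $\eta$ in $\Sigma_2(T_{2,d})$, and it hinges on the topological facts recalled in Subsection~\ref{sub:HeddenKirkLivingston}: the lifted longitudes satisfy $\widetilde\lambda_2=-\widetilde\lambda_1$ in $H_1(\Sigma_2(T_{2,d});\Z)$, so the character takes inverse values on them. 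Once these evaluations are pinned down and the signs from Remark~\ref{rem:ReverseMirror} are accounted for, collecting the four contributions reproduces the three lines of~\eqref{eq:HKLMetabelianDecompo} termwise, which completes the proof.
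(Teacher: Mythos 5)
Your proposal is correct and follows the same skeleton as the paper's proof: Corollary~\ref{cor:MetabelianConnectedSum} together with Lemma~\ref{lem:CharactersOfK} to reduce to the four summands, Remark~\ref{rem:ReverseMirror} for the two mirrored summands, and case~(1) of Theorem~\ref{thm:metabelian-cabling-formula} (with $n=2$, $w=2$, so $w/n=1$) for the cables, yielding the isometry $\Bl_{\alpha(2,\chi_\theta)}(K'_{2,d}) \cong \Bl_{\alpha(2,\chi_\theta)}(T_{2,d}) \oplus \Bl(K')(\xi_{\ell}^{\theta}t) \oplus \Bl(K')(\xi_{\ell}^{-\theta}t)$. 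The one place where you genuinely diverge is the step you yourself flag as the main obstacle: pinning down the companion evaluation points as the inverse pair $\xi_{\ell}^{\pm\theta}$. You propose to evaluate $\chi_i(q_Q(\mu_Q^{-w}\eta))$ via the two lifts of $\eta$ and the relation $\widetilde{\lambda}_2=-\widetilde{\lambda}_1$ from Subsection~\ref{sub:HeddenKirkLivingston}; this Litherland/Hedden--Kirk--Livingston-style argument is sound (the deck transformation acts by $-1$ on $H_1(\Sigma_2)$, and $\widetilde{\lambda}_1$ generates it, so the two character values are $\xi_{\ell}^{\theta}$ and $\xi_{\ell}^{-\theta}$ by the defining condition of $\chi_\theta$ in Proposition~\ref{prop:HKLCable}), but you leave it at sketch level. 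The paper sidesteps this bookkeeping entirely: since $n=2$ divides $w=2$, the restricted representation $\alpha(2,\chi)_{K'}$ is abelian, hence determined by its value on the curve $a$, which generates $H_1(M_{K'};\Z)$, and that value was already computed explicitly in Subsection~\ref{sub:blanchf-forms-twist}, namely $\rho_\theta(a)=\bsm t\xi_{\ell}^{-\theta} & 0\\ 0 & t\xi_{\ell}^{\theta}\esm$, so the two evaluation points are read off in one line. Both routes are legitimate; the paper's is shorter because the matrix computation already exists in Section~\ref{sec:ExplicitComputations}, while yours is more topological and independent of that computation --- but if you keep your route, you should actually carry out the evaluation of $\chi_i(q_Q(\mu_Q^{-w}\eta))$ rather than merely indicating that it can be done.
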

\begin{proof}
  Since we know from Corollary~\ref{cor:MetabelianConnectedSum} that metabelian Blanchfield pairings are additive, up to Witt equivalence, we need only study the metabelian Blanchfield pairing of~$(2,d)$-cables of~$(2,2k+1)$-torus knots (here, we also used Remark~\ref{rem:ReverseMirror}).
  The proposition will follow from the claim that given a~$(2,2k+1)$-torus knot \(K'\) and a character~$\chi=\chi_\theta$ on~$H_1(L_2(K'_{2,d});\Z)$, there is an isometry
  $$\Bl_{\alpha(2,\chi_{\theta})}(K'_{2,d}) \cong \Bl_{\alpha(2,\chi_{\theta})}(T_{2,d}) \oplus \Bl(K')(\xi_{\ell}^{-\theta}t) \oplus \Bl(K')(\xi_{\ell}^{\theta}t).$$
  Using the notation from Section~\ref{sub:cabling},~$K'_{2,d}$ is a satellite knot with pattern \(P = T_{2,d}\), companion~$K'$, and the infection curve \(\eta\) is in fact the curve which was denoted by \(a\) in Subsection~\ref{sub:blanchf-forms-twist}.
  Since the winding number is~$w=2$, the first assertion of Theorem~\ref{thm:metabelian-cabling-formula} implies that~$\alpha(2,\chi)$ is~$\eta$-regular and therefore restricts to a representation~$\alpha(2,\chi)_{K'}$ on~$\pi_1(M_{K'})$.
  Since~$n=2$ divides~$w=2$, the representation~$\alpha(2,\chi)_{K'}$ is abelian.
  As the curve~$a$ is a generator of~$H_1(M_{K'};\Z)$, we see that~$\alpha(2,\chi)_{K'}$ is determined by~$\alpha(2,\chi_{\theta})(a) =
  \bsm
  t \xi_{\ell}^{-\theta} & 0 \\
  0                 & t \xi_{\ell}^{\theta} \\
  \esm$.
  The claim (and thus the proposition) now follow by applying Theorem~\ref{thm:metabelian-cabling-formula}.
\end{proof}

Next, we determine the conditions under which~$\Bl_{\alpha(2,\chi)}(K)$ is metabolic.

\begin{proposition}
  \label{prop:MetabolicCriterion}
  Let~$K$ be the knot described in~\eqref{eq:HKLKnot} and let~$\chi \colon H_1(\Sigma_2(K);\Z) \to \Z_{\ell}~$ be a character.
  Write~$\chi=\chi_1+\chi_2+\chi_3+\chi_4$ as in Lemma~\ref{lem:CharactersOfK} where~$\chi_i=\chi_{\theta_i}$ with \(0 \leq \theta_{1},\theta_{4}\leq 6\) and \(0 \leq \theta_{2},\theta_{3} \leq 7\).
  Then the metabelian Blanchfield pairing \(\Bl_{\alpha(2,\chi)}(K)\) is metabolic if and only if \(\theta_{1}=\theta_{2}=\theta_{3}=\theta_{4}=0\).
\end{proposition}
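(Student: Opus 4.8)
The plan is to reduce the statement to signature jumps and then exploit the fact that the eight summands produced by Proposition~\ref{prop:MetabelianBlanchfieldHKLKnot} have jumps supported on disjoint subsets of \(S^1\). Since signature jumps are additive and Witt-invariant, and a linking form is metabolic exactly when all its signature jumps vanish (Theorem~\ref{thm:witt_equivalence}), the form \(\Bl_{\alpha(2,\chi)}(K)\) is metabolic if and only if the total signature jump of the right-hand side of~\eqref{eq:HKLMetabelianDecompo} vanishes at every \(\xi\in S^1\). First I would record the support of each summand: writing \(J^{(d)}_\theta\) for the jump function of \(\Bl_{\alpha(2,\chi_\theta)}(T_{2,d})\), Theorem~\ref{thm:DecompoTwistedBlanchfield} shows \(J^{(13)}_\theta\) is supported on the \(13\)th roots of unity and \(J^{(15)}_\theta\) on the \(15\)th roots, while by Remark~\ref{rem:LevineTristram} the four abelian trefoil summands \(\Bl(T_{2,3})(\eta t)\) jump at the points \(\eta^{-1}\xi_6^{\pm1}\), i.e.\ at products of a \(13\)th or \(15\)th root of unity with a primitive sixth root. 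An elementary \(\gcd\)/parity computation shows these three families of points are pairwise disjoint.

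The ``if'' direction is immediate: when \(\theta_1=\theta_2=\theta_3=\theta_4=0\), the eight summands of~\eqref{eq:HKLMetabelianDecompo} cancel in signed pairs (each form then appears together with its negative), so the sum is Witt-trivial and hence metabolic. For the ``only if'' direction I would assume metabolicity and split the vanishing condition along the three families. Consider the trefoil family first: if \(\theta_1\neq0\), then among the four trefoil summands only \(\Bl(T_{2,3})(\xi_{13}^{\theta_1}t)\) contributes a jump at \(\xi_{13}^{-\theta_1}\xi_6\)—coincidence of this point with the support of any other trefoil summand, or with any \(13\)th or \(15\)th root, forces \(\theta_1\equiv0\) by the same \(\gcd\) arguments—leaving an uncancelled \(\pm1\) jump; hence \(\theta_1=0\), and symmetrically \(\theta_3=0\).

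It then remains to analyse the two torus families, which now read \(J^{(13)}_{0}-J^{(13)}_{\theta_4}\) and \(J^{(15)}_{\theta_2}-J^{(15)}_{0}\). Here I would use that \(\theta\mapsto J^{(d)}_\theta\) is injective on \(\{0,1,\dots,(d-1)/2\}\): for \(\theta\geq1\), Theorem~\ref{thm:DecompoTwistedBlanchfield} gives \(J^{(d)}_\theta(\xi^{e})=\pm1\) for every \(e\neq\theta\) and \(J^{(d)}_\theta(\xi^{\theta})=0\), so the zero set of \(J^{(d)}_\theta\) is exactly \(\{\xi^{\pm\theta}\}\), whereas \(J^{(d)}_0\) has empty zero set. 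Comparing zero sets, injectivity forces \(\theta_1=\theta_4\) and \(\theta_2=\theta_3\), and combined with \(\theta_1=\theta_3=0\) this yields \(\theta_1=\theta_2=\theta_3=\theta_4=0\), completing the proof.

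The main obstacle is this last point, namely computing the jump function \(J^{(d)}_0\) of the \emph{trivial}-character metabelian form, which is not covered by Theorem~\ref{thm:DecompoTwistedBlanchfield}. The plan here is to observe that for trivial \(\chi\) the representation \(\alpha(2,\chi_0)\) sends the meridian to \(\bsm 0&1\\ t&0\esm\), whose eigenvalues \(\pm\sqrt t\) identify \(\Bl_{\alpha(2,\chi_0)}(T_{2,d})\) with the transfer to \(\LC\) of \(\Bl(T_{2,d})(s)\oplus\Bl(T_{2,d})(-s)\) along \(t=s^2\). Consequently its order is \(\doteq\Delta_{T_{2,d}}(s)\Delta_{T_{2,d}}(-s)\big|_{s^2=t}\), which has a \emph{simple} root at every \(d\)th root of unity \(\neq1\); as the form is nonsingular, each such primary summand is a copy of \(\ee(1,\pm1,\xi^{e},\C)\) and contributes a nonzero jump. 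This guarantees that \(J^{(d)}_0\) has empty zero set and hence that the comparison of zero sets pins down \(\theta_4=\theta_2=0\). The remaining ingredients—the explicit parity reading of Theorem~\ref{thm:DecompoTwistedBlanchfield} and the disjointness of the three families of jump points—are routine bookkeeping.
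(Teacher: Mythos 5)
Your strategy is sound and genuinely different from the paper's, which argues in the opposite order: the paper first uses the twisted jumps at powers of $\xi_{13}$ and $\xi_{15}$ to force $\theta_1=\theta_4$ and $\theta_2=\theta_3$ (at $\xi_{13}^{\theta_1}$ the summand $\Bl_{\alpha(2,\chi_{\theta_1})}(T_{2,13})$ has trivial jump while $-\Bl_{\alpha(2,\chi_{\theta_4})}(T_{2,13})$ jumps by $\pm1$), after which the twisted summands cancel in the Witt group and the remaining four trefoil terms in~\eqref{eq:UntwistedBlanchfieldTorusKnots} are disposed of by citing the proof of \cite[Theorem 7.1]{HeddenKirkLivingston}. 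You instead eliminate $\theta_1,\theta_3$ first by elementary root-of-unity arithmetic on the trefoil jump points (replacing the external HKL input), and then pin down $\theta_4,\theta_2$ by comparing zero sets of the twisted jump functions. The latter forces you to understand the trivial-character form, and your transfer/induction computation of $\Bl_{\alpha(2,\chi_0)}(T_{2,d})$ --- order $\doteq P_{d-1}(t)$ with simple roots at all nontrivial $d$th roots of unity, hence a summand $\ee(1,\pm1,\xi^e,\C)$ and a nonzero jump at each --- is correct and is a genuine addition: Theorem~\ref{thm:DecompoTwistedBlanchfield} only covers $1\leq\theta\leq k$, and the basis vector $Z_1$ in the proof of Proposition~\ref{prop:TwistedModuleComputation} carries $\xi^{\theta}-\xi^{-\theta}$ in its denominators, so it degenerates at $\theta=0$. (The paper's own first step quietly needs the same $\theta=0$ input when exactly one of $\theta_1,\theta_4$ vanishes, so flagging and filling this is a strength of your write-up.) Your cross-family disjointness claim is also fine as stated: every trefoil jump point is a $195$th root of unity times a primitive sixth root, hence has even order, while $13$th and $15$th roots have odd order.

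There is, however, one genuine gap, in the words ``and symmetrically $\theta_3=0$'': the sole-support claim fails for $(\theta_1,\theta_3)=(0,5)$, which lies in your allowed range. Indeed $\xi_{15}^{-5}\xi_6=\xi_3^{-1}\xi_6=\xi_6^{-1}$, so once $\theta_1=0$ is established, the point $\xi_{15}^{-\theta_3}\xi_6$ at $\theta_3=5$ is also a jump point of \emph{both} copies of $\Bl(T_{2,3})(t)$; the coincidence equation forces $\theta_3\equiv 5$ and $\theta_1\equiv 0$, not $\theta_3\equiv 0$ as you assert. The alternative point $\xi_{15}^{-5}\xi_6^{-1}=-1$ does not help: it is shared with $-\Bl(T_{2,3})(\xi_{15}^{-5}t)$, and there the two contributions cancel exactly (the parameters land at $\xi_6$ and $\xi_6^{-1}$, whose trefoil jumps are opposite). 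The conclusion nevertheless survives by an explicit three-term sum: writing $\eps$ for the trefoil jump at $\xi_6$ (so $-\eps$ at $\xi_6^{-1}$, the form being real), the total jump at $\xi_6^{-1}$ is $2(-\eps)$ from the two copies of $\Bl(T_{2,3})(t)$ plus $-\eps$ from $-\Bl(T_{2,3})(\xi_3 t)$, i.e.\ $-3\eps\neq0$. So you must either add this one-case computation or revert to the paper's order (forcing $\theta_1=\theta_4$, $\theta_2=\theta_3$ first) for this part; as written, the ``only one summand contributes'' argument is false in exactly this configuration, and your blanket phrase ``pairwise disjoint'' should be tightened to exclude only cross-family coincidences, since coincidences within the trefoil family do occur.
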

\begin{proof}
  First, recall from Theorem~\ref{thm:witt_equivalence} that if a linking form is metabolic, then its signature jumps vanish.
  Substituting~$\theta_{1}=\theta_{2}=\theta_{3}=\theta_{4}=0$ into~\eqref{eq:HKLMetabelianDecompo} shows that~$\Bl_{\alpha(2,\chi)}(K)$ is metabolic.
  
  We now prove the converse in two steps.
  Firstly, we show that if~$\theta_1 \neq \theta_4$ (or~$\theta_2 \neq \theta_3$), then~$\Bl_{\alpha(2,\chi)}(K)$ is not metabolic.
  Secondly, we show that when~$\theta_1=\theta_4$ and~$\theta_3=\theta_2$, the metabelian Blanchfield form~$\Bl_{\alpha(2,\chi)}(K)$ is metabolic if and only if~$\theta_{1} = \theta_{3} = 0$.

  Assume that \(\theta_{1} \neq \theta_{4}\).
  We assert that the signature jump of \(\Bl_{\alpha(2,\chi)}(K)\) at \(\xi_{13}^{\theta_{1}}\) is \(\pm1\).
  To see this, first note that Theorem~\ref{thm:DecompoTwistedBlanchfield} (or a glance at the twisted Alexander polynomial) implies that in~\eqref{eq:HKLMetabelianDecompo}, only the summand \(\Bl_{\alpha(2,\chi_{1})}(T_{2,13}) \oplus -\Bl_{\alpha(2,\chi_{4})}(T_{2,13})\) can contribute a non-trivial signature jump at \(\xi_{13}^{\theta_{1}}\): indeed Theorem~\ref{thm:DecompoTwistedBlanchfield} shows that~$\Bl_{\alpha(2,\chi_{2})}(T_{2,15}) \oplus -\Bl_{\alpha(2,\chi_{3})}(T_{2,15})$ can only jump at powers of~$\xi_{15}$.
  Since the untwisted terms in~\eqref{eq:HKLMetabelianDecompo} do not contribute to the signature jump either, we focus on~$\Bl_{\alpha(2,\chi_{1})}(T_{2,13}) \oplus -\Bl_{\alpha(2,\chi_{4})}(T_{2,13})$.
Theorem~\ref{thm:DecompoTwistedBlanchfield} implies that the signature function of~\(\Bl_{\alpha(2,\chi_{1}) }(T_{2,13})\) jumps at~$\xi_{13}^e$ when~$e \neq \theta_1$.
  Therefore, the signature jump of \(\Bl_{\alpha(2,\chi_{1})}(T_{2,13})\) at~\(\xi_{13}^{\theta_{1}}\) is trivial and, since~$\theta_4 \neq \theta_1$, the signature jump of \(-\Bl_{\alpha(2,\chi_{4})}(T_{2,13})\) at~\(\xi_{13}^{\theta_{1}}\) is~\(\pm1\).
  This concludes the proof of the assertion.
  Using this assertion and Theorem~\ref{thm:witt_equivalence}, we deduce that~\(\Bl_{\alpha(2,\chi)}(K)\) is not metabolic.
  The case where \(\theta_{2} \neq \theta_{3}\) is treated analogously.

  Next, we assume that \(\theta_{1} = \theta_{4}\) and \(\theta_{2}=\theta_{3}\).
  Using~\eqref{eq:HKLMetabelianDecompo}, this assumption implies that the metabelian Blanchfield form~$\Bl_{\alpha(2,\chi)}(K)~$ is Witt equivalent to 
  \begin{equation}
    \label{eq:UntwistedBlanchfieldTorusKnots}
    \Bl(T_{2,3})(\xi_{13}^{\theta_{1}}t) \oplus \Bl(T_{2,3})(\xi_{13}^{-\theta_{1}}t) \oplus -\Bl(T_{2,3})(\xi_{15}^{\theta_{3}}t) \oplus -\Bl(T_{2,3})(\xi_{15}^{-\theta_{3}}t).
  \end{equation}
  To determine whether~$\Bl_{\alpha(2,\chi)}(K)~$ is metabolic, Theorem~\ref{thm:witt_equivalence} implies that we must study the jumps of the signature function of the linking form in~\eqref{eq:UntwistedBlanchfieldTorusKnots}.
  Since we are dealing with untwisted Blanchfield forms, these jumps are the signature jumps of the corresponding Levine-Tristram signature function; see Remark~\ref{rem:LevineTristram}.
  The proof of \cite[Theorem 7.1]{HeddenKirkLivingston} shows that for distinct~$a_i$, the jumps (as~$\omega$ varies along~$S^1$) of the Levine-Tristram signatures~$\sigma_{T_{m,n}}(\xi_\ell^{a_i} \omega)$ occur at distinct points (here~$m,n$ are prime).
  Consequently, we deduce that \(\Bl_{\alpha(2,\chi)}(K)\) is metabolic if and only if \(\theta_{1} = \theta_{3} = 0\).
  This concludes proof of the theorem.
\end{proof}

We recover a result of Hedden, Kirk and Livingston~\cite{HeddenKirkLivingston}.

\begin{theorem}
  \label{thm:HKLNotSlice}
  The knot \(K\) from~\eqref{eq:HKLKnot} is algebraically slice but not slice.
\end{theorem}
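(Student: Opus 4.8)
The plan is to treat the two assertions separately: the algebraic sliceness is a classical computation, while the failure of sliceness is where the metabelian machinery developed in the previous sections enters.

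For algebraic sliceness, I would invoke the classical cabling formula (Litherland's theorem, the abelian specialisation of Theorem~\ref{thm:metabelian-cabling-formula}). Writing $K = K_1 \# K_2 \# K_3 \# K_4$ as in~\eqref{eq:HKLKnot}, this formula gives isometries $\Bl(T_{2,3;2,13}) \cong \Bl(T_{2,13}) \oplus \Bl(T_{2,3})(t^2)$ and $\Bl(T_{2,3;2,15}) \cong \Bl(T_{2,15}) \oplus \Bl(T_{2,3})(t^2)$ of rational Blanchfield forms, where $\Bl(T_{2,3})(t^2)$ denotes the pullback under $t \mapsto t^2$ dictated by the winding number $2$. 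Summing the four contributions, the two copies of $\Bl(T_{2,3})(t^2)$ cancel against each other, as do the $\pm\Bl(T_{2,13})$ and $\pm\Bl(T_{2,15})$ terms, so $\Bl(K)$ becomes a sum of hyperbolic blocks of the form $A \oplus -A$ and is therefore metabolic; equivalently the Levine--Tristram signature $\sigma_K$ vanishes identically. Hence $K$ is algebraically slice, a fact also established directly in~\cite{HeddenKirkLivingston}.

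For the non-sliceness I would argue by contradiction, combining the Casson--Gordon obstruction of Theorem~\ref{thm:cg_obs} with the metabolicity criterion of Proposition~\ref{prop:MetabolicCriterion}. Suppose $K$ were slice and apply Theorem~\ref{thm:cg_obs} with $n=2$: there is a metaboliser $P$ of the linking pairing on $H_1(\Sigma_2(K);\Z) \cong \Z_{13} \oplus \Z_{15} \oplus \Z_{15} \oplus \Z_{13}$. Since the $\Q/\Z$-linking pairing splits orthogonally along primary components, $P$ decomposes as a direct sum of metabolisers of the primary parts; in particular its $13$-primary part $P_{13}$ is a subgroup of order $13$ inside the $\Z_{13} \oplus \Z_{13}$ coming from $K_1$ and $K_4$, hence proper and non-zero. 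I would then choose a non-trivial character $\chi \colon H_1(\Sigma_2(K);\Z) \to \Z_{13}$ vanishing on $P$: such $\chi$ exists because the annihilator of the proper subgroup $P_{13}$ contains a non-zero homomorphism, and any $\Z_{13}$-valued character automatically annihilates the $3$- and $5$-primary parts, so it vanishes on all of $P$. In the notation of Proposition~\ref{prop:MetabolicCriterion} this $\chi$ has $(\theta_2,\theta_3)=(0,0)$ and $(\theta_1,\theta_4)\neq(0,0)$ after reducing via Remark~\ref{rem:HalfCharacters}, whence the criterion shows that $\Bl_{\alpha(2,\chi)}(K)$ is \emph{not} metabolic.

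The remaining point to check, and the only genuinely delicate bookkeeping, is compatibility with the convention in Theorem~\ref{thm:cg_obs}, where one must allow $\chi_b$, the composite of $\chi$ with the inclusion $\Z_{13}\hookrightarrow\Z_{13^b}$. Here I would observe that this inclusion sends a generator to $13^{b-1}$, so that $\xi_{13^b}^{\chi_b(v)} = \big(\xi_{13^b}^{13^{b-1}}\big)^{\chi(v)} = \xi_{13}^{\chi(v)}$; thus $\alpha(2,\chi_b)$ and $\alpha(2,\chi)$ are the same representation and $\Bl_{\alpha(2,\chi_b)}(K) = \Bl_{\alpha(2,\chi)}(K)$ stays non-metabolic for every $b\geq 1$. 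This contradicts Theorem~\ref{thm:cg_obs}, so $K$ is not slice. The substantive work, namely the computation of the isometry type of the twisted forms of $T_{2,2k+1}$ (Theorem~\ref{thm:DecompoTwistedBlanchfield}) and their assembly through the satellite formula into Proposition~\ref{prop:MetabolicCriterion}, has already been carried out; the expected obstacle here is only the packaging above, i.e. confirming that \emph{every} possible metaboliser still admits a vanishing character detecting a non-trivial signature jump.
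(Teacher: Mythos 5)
Your proposal is correct, and its core --- deducing non-sliceness by playing Theorem~\ref{thm:cg_obs} against Proposition~\ref{prop:MetabolicCriterion} --- is exactly the paper's argument; the differences are in what you prove versus what you cite, and in how much of the bookkeeping you make explicit. For algebraic sliceness the paper simply cites \cite[Lemma 2.1]{HeddenKirkLivingston}, whereas you re-derive it from Litherland's satellite formula, noting that $\Bl(K)$ decomposes as a sum of blocks of the form $A \oplus -A$ (the two copies of $\Bl(T_{2,3})(t^2)$ and the $\pm\Bl(T_{2,13})$, $\pm\Bl(T_{2,15})$ pairs cancelling), hence is metabolic via the diagonal metabolisers; this is a perfectly good self-contained route, with the one caveat that your aside ``equivalently the Levine--Tristram signature $\sigma_K$ vanishes identically'' is not an equivalence --- vanishing of $\sigma_K$ is strictly weaker than metabolicity of $\Bl(K)$, so you should rely on the metabolic formulation (which does characterise algebraic sliceness) and drop the ``equivalently.'' For non-sliceness, the paper compresses the endgame into one sentence (``metabolic iff the character is trivial, contradiction''), leaving implicit two points that you supply in full: first, that the Casson--Gordon metaboliser $P \subset H_1(\Sigma_2(K);\Z) \cong \Z_{13}\oplus\Z_{15}\oplus\Z_{15}\oplus\Z_{13}$ splits along primary components, so its $13$-primary part is an order-$13$ proper subgroup of $\Z_{13}\oplus\Z_{13}$ and therefore admits a non-trivial annihilating $\Z_{13}$-valued character, which automatically kills the $3$- and $5$-primary parts and hence vanishes on all of $P$ with $(\theta_2,\theta_3)=(0,0)$ and $(\theta_1,\theta_4)\neq(0,0)$; and second, that the $\chi_b$ clause of Theorem~\ref{thm:cg_obs} is harmless because the inclusion $\Z_{13}\hookrightarrow\Z_{13^b}$ gives $\xi_{13^b}^{\chi_b(v)}=\xi_{13}^{\chi(v)}$, so $\alpha(2,\chi_b)=\alpha(2,\chi)$ and non-metabolicity persists for every $b$. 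Both verifications are correct, and making them explicit is a genuine gain in rigor over the paper's terse proof; the price is only length.
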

\begin{proof}
  Hedden-Kirk-Livingston show that \(K\) is algebraically slice~\cite[Lemma 2.1]{HeddenKirkLivingston}.
  By means of contradiction, assume that~$K$ is slice.
  Theorem~\ref{thm:cg_obs} implies that for any prime power~$\ell$, there exists a metaboliser~$P$ of~$\lambda_{\ell}$ such that for any prime power~$q^a$, and any non-trivial character~$\chi \colon  H_1(L_{\ell}(K);\Z) \to \Z_{q^a}$ vanishing on~$P$, we have some~$b \geq a$ such that the metabelian Blanchfield pairing~$\Bl_{\alpha(n,\chi_b)}(K)$ is metabolic.
  As Proposition~\ref{prop:MetabolicCriterion} shows that the Blanchfield pairing~$\Bl_{\alpha(n,\chi_b)}(K)$ is metabolic if and only if the character is trivial, we obtain the desired~contradiction.
  This concludes the proof of the theorem.
\end{proof}



\bibliographystyle{plain}
\def\MR#1{}
\bibliography{research}
\end{document}